\def\blfootnote{\xdef\@thefnmark{}\@footnotetext}
\newcommand\ccnote{
    \blfootnote{\copyright\,\, Otis Chodosh, Yevgeny Liokumovich, and Luca Spolaor}
    \blfootnote{\ccLogo\, \ccAttribution\,\, Licensed under a \href{https://creativecommons.org/licenses/by/4.0/}{Creative Commons Attribution License (CC-BY)}.}
}
\numberwithin{equation}{section}
\renewcommand{\leq}{\leqslant}
\renewcommand{\geq}{\geqslant}
\renewcommand{\mathbb}{\varmathbb}
\newtheorem{theorem}{Theorem}[section]
\newtheorem{lemma}[theorem]{Lemma}
\newtheorem{proposition}[theorem]{Proposition}
\newtheorem{definition}[theorem]{Definition}
\newtheorem{remark}[theorem]{Remark}
\newcommand{\NN}{\mathbb{N}}
\newcommand{\RR}{\mathbb{R}}
\newcommand{\ZZ}{\mathbb{Z}}
\newcommand{\C}{{\bf C}}
\newcommand{\Snm}{\mathcal{S}_{\textnormal{nm}}}
\newcommand{\Om}{\Omega}
\newcommand{\cA}{\mathcal A}
\newcommand{\cC}{\mathcal C}
\newcommand{\cF}{\mathcal F}
\newcommand{\cG}{\mathcal G}
\renewcommand{\cH}{\mathcal H}
\newcommand{\cI}{\mathcal I}
\newcommand{\cM}{\mathcal M}
\newcommand{\cO}{\mathcal O}
\renewcommand{\cR}{\mathcal R}
\newcommand{\cS}{\mathcal S}
\newcommand{\cV}{\mathcal V}
\newcommand{\bC}{\mathbf{C}}
\newcommand{\mm}{{\bf m}}
\newcommand{\bM}{\mathbf{M}}
\DeclareMathOperator{\Per}{Per}
\DeclareMathOperator{\Vol}{Vol}
\DeclareMathOperator{\supp}{supp}
\DeclareMathOperator{\Ric}{Ric}
\newcommand{\eps}{\varepsilon}
\DeclareMathOperator{\Index}{Index}
\DeclareMathOperator{\sing}{Sing} 
\DeclareMathOperator{\reg}{Reg}
\newcommand\res{\mathop{\hbox{\vrule height 7pt width .3pt depth 0pt
\vrule height .3pt width 5pt depth 0pt}}\nolimits}
\DeclareMathOperator{\Met}{Met}
\DeclareMathOperator{\hnm}{\mathfrak{h}_\textnormal{nm}}
\address{OC: Department of Mathematics, Stanford University, Building 380, Stanford, California 94305, USA.}
\email{ochodosh@stanford.edu}
\address{YL: Department of Mathematics, University of Toronto, 40 St
George Street, Toronto, ON M5S 2E4, Canada.}
\email{ylio@math.toronto.edu}
\address{LS: Department of Mathematics, UC San Diego, AP\&M, La Jolla, California, 92093, USA.}
\email{lspolaor@ucsd.edu}
\begin{document}

\thispagestyle{empty}

\begin{minipage}{0.28\textwidth}
\begin{figure}[H]
\includegraphics[width=2.5cm,height=2.5cm,left]{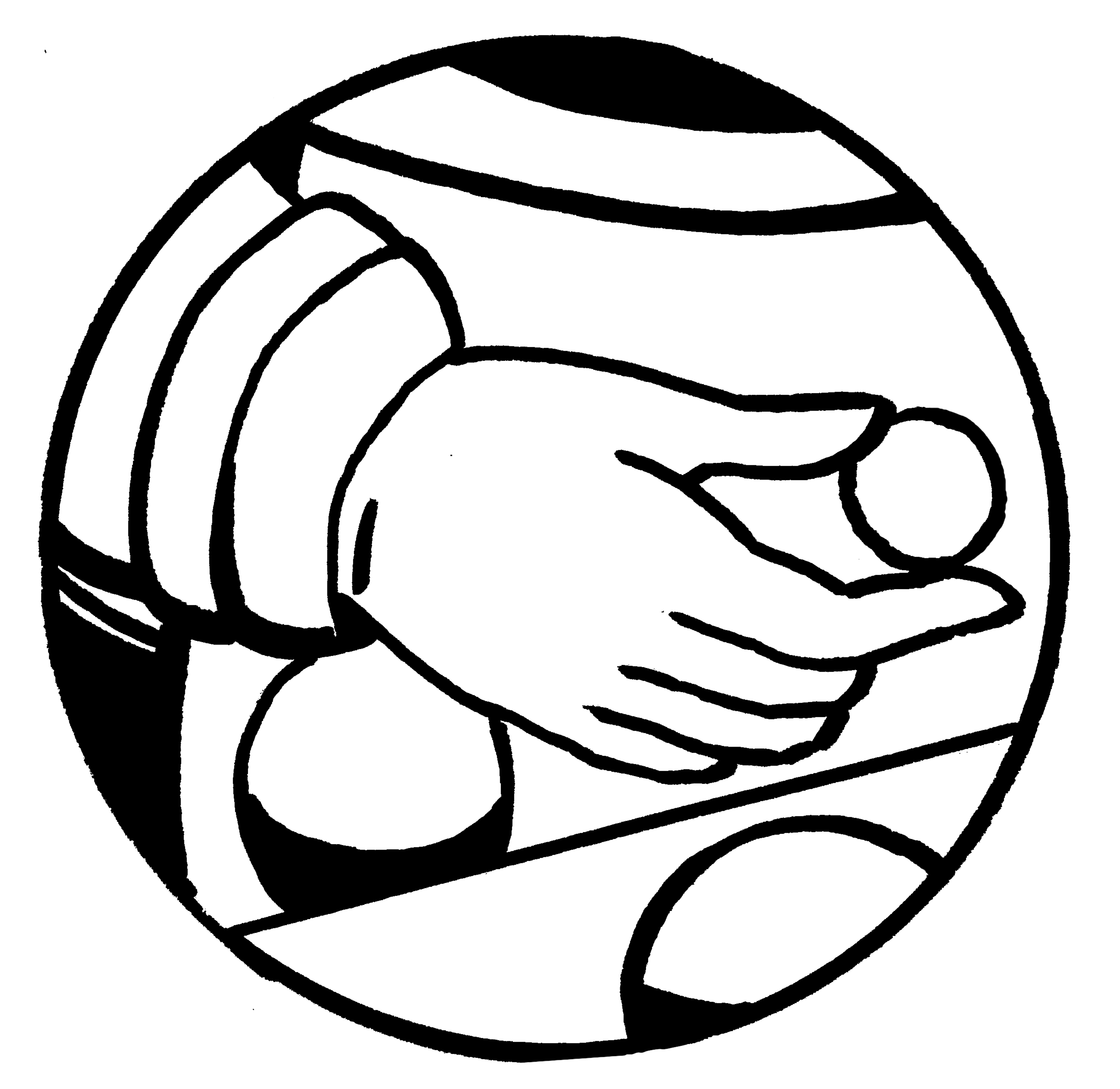}
\end{figure}
\end{minipage}
\begin{minipage}{0.7\textwidth} 
\begin{flushright}
Ars Inveniendi Analytica (2022), Paper No. 2, 27 pp.
\\
DOI 10.15781/j4aj-kd66
\\
ISSN: 2769-8505
\end{flushright}
\end{minipage}

\ccnote

\vspace{1cm}


\begin{center}
\begin{huge}
\textit{Singular behavior and generic regularity of min-max minimal hypersurfaces}

\end{huge}
\end{center}

\vspace{1cm}


\begin{minipage}[t]{.28\textwidth}
\begin{center}
{\large{\bf{Otis Chodosh}}} \\
\vskip0.15cm
\footnotesize{Stanford University}
\end{center}
\end{minipage}
\hfill
\noindent
\begin{minipage}[t]{.28\textwidth}
\begin{center}
{\large{\bf{Yevgeny Liokumovich}}} \\
\vskip0.15cm
\footnotesize{University of Toronto}
\end{center}
\end{minipage}
\hfill
\noindent
\begin{minipage}[t]{.28\textwidth}
\begin{center}
{\large{\bf{Luca Spolaor}}} \\
\vskip0.15cm
\footnotesize{UC San Diego} 
\end{center}
\end{minipage}

\vspace{1cm}


\begin{center}
\noindent \em{Communicated by Francesco Maggi}
\end{center}
\vspace{1cm}


\noindent \textbf{Abstract.} \textit{We show that for a generic $8$-dimensional Riemannian manifold with positive Ricci curvature, there exists a smooth minimal hypersurface. Without the curvature condition, we show that for a dense set of $8$-dimensional Riemannian metrics there exists a minimal hypersurface with at most one singular point. This extends previous work on generic regularity that only dealt with area-minimizing hypersurfaces.}

\textit{These results are a consequence of a more general estimate for a one-parameter min-max minimal hypersurface $\Sigma \subset (M,g)$ (valid in any dimension):
\[
\cH^{0} (\mathcal{S}_{\textnormal{nm}}(\Sigma)) +\Index(\Sigma) \leq 1
\]
where $\mathcal{S}_{\textnormal{nm}}(\Sigma)$ denotes the set of singular points of $\Sigma$ with a unique tangent cone non-area minimizing on either side.}
\vskip0.3cm

\noindent \textbf{Keywords.} Minimal surfaces, generic regularity, min-max. 
\vspace{0.5cm}



\section*{Introduction}

It is well known that $7$-dimensional area minimizing hypersurfaces can have isolated singularities. Using work of Hardt--Simon \cite{HS}, Smale proved in \cite{Smale} that in an $8$-dimensional manifold $M$ with $H_7(M; \mathbb{Z}) \neq 0$, there exists a smooth embedded area minimizing hypersurface for a generic choice of metric. In other words, he showed that isolated singularities of an area-minimizing $7$-dimensional hypersurface can generically be perturbed away. 

One may thus seek to find a smooth embedded minimal hypersurface in all $8$-manifolds $M$ equipped with a generic metric $g$, even when $H_7(M;\mathbb{Z})=0$. Here, we find such a hypersurface in the case of positive Ricci curvature, and give a partial answer in general. We let $\Met^{2,\alpha}(M)$ denote the space of Riemannian metrics of regularity $C^{2,\alpha}$ on $M$ and $\Met^{2,\alpha}_{\Ric>0}(M)\subset\Met^{2,\alpha}(M)$ denote the open subset of Ricci positive metrics. 

\begin{theorem}[Generic regularity with positive Ricci in dimension $8$]\label{thm:generic_bound_ricci}
Let $M^8$ be a compact smooth $8$-manifold. There is an open and dense set $\cG \subset \Met_{\Ric>0}^{2,\alpha}(M)$ so that for $g\in \cG$, there exists a smooth embedded minimal hypersurface $\Sigma \subset M$. 
\end{theorem}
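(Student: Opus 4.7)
The plan is to deduce the theorem from the main index estimate together with a Hardt--Simon--Smale style perturbation argument, followed by a standard persistence step.

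Let $g \in \Met^{2,\alpha}_{\Ric>0}(M)$. First, I run the one-parameter Almgren--Pitts min-max procedure on $(M,g)$ to produce a closed minimal hypersurface $\Sigma$. The Marques--Neves index upper bound, together with the Zhou--Zhu multiplicity-one theorem in positive Ricci, gives $\Index(\Sigma) \leq 1$. Since $\Ric(g) > 0$, the Jacobi form on any closed minimal hypersurface in $(M,g)$ is strictly positive, forcing $\Index(\Sigma) \geq 1$. Hence $\Index(\Sigma) = 1$, and the main estimate yields
\[
\cH^0(\Snm(\Sigma)) \leq 1 - \Index(\Sigma) = 0,
\]
so $\Snm(\Sigma) = \emptyset$. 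In dimension $8$, Pitts--Schoen--Simon regularity applied to a one-parameter min-max hypersurface ensures that $\sing \Sigma$ is a finite set of isolated points. The vanishing of $\Snm(\Sigma)$ then says that at each such point the tangent cone is either non-unique or else area-minimizing on at least one side, i.e.\ precisely the situation where the Hardt--Simon desingularization technology applies.

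For the density part, I perform a Hardt--Simon--Smale perturbation at each $p \in \sing \Sigma$. The Hardt--Simon foliation theorem produces a one-parameter family of smooth area-minimizing leaves limiting onto the tangent cone at $p$, and the Smale scheme (as reworked for non-minimizers in recent generic regularity literature) then constructs a small compactly supported $C^{2,\alpha}$ metric perturbation near $\sing \Sigma$ that nudges the one-parameter min-max hypersurface onto smooth Hardt--Simon leaves near each former singular point. For openness, after a further arbitrarily small perturbation I apply White's bumpy metric theorem to ensure the resulting smooth minimal hypersurface $\Sigma'$ is non-degenerate. The implicit function theorem then supplies smooth minimal hypersurfaces in all $C^{2,\alpha}$-nearby metrics; these remain in $\Met^{2,\alpha}_{\Ric>0}(M)$ for small enough perturbations, since positive Ricci is an open condition.

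The principal obstacle is the density step. Unlike the area-minimizing setting treated by Smale, the one-parameter min-max hypersurface is not uniquely determined by the metric and could in principle jump discontinuously under perturbation. Controlling this requires the strict $\Index(\Sigma) = 1$ constraint together with continuous dependence (in the appropriate sense) of the one-parameter width on the metric, so that after the Hardt--Simon perturbation the min-max solution is indeed located at a nearby smooth Hardt--Simon leaf rather than at a separate critical hypersurface that may again develop singularities. The positive Ricci hypothesis is essential here, both to exclude stable components (guaranteeing $\Index(\Sigma) \geq 1$) and to invoke Zhou--Zhu so that no further genericity on the metric is needed to secure multiplicity one.
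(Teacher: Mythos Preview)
Your proposal has a genuine gap in the density step, precisely at the point you yourself flag as ``the principal obstacle.'' You want to perturb the metric and then argue that the \emph{new} one-parameter min-max hypersurface is smooth and close to the old one. As you note, the min-max hypersurface is not uniquely determined by $g$ and may jump; your suggested fix (continuity of the width plus $\Index=1$) is not developed and in fact is not how the paper proceeds. The paper sidesteps this entirely: the theorem only asks for \emph{some} smooth embedded minimal hypersurface in $(M,\tilde g)$, not a min-max one. So once $\cS_0(\Sigma)\setminus\hnm(\Sigma)$ consists of points where $\Sigma$ is one-sided minimizing, the paper performs a direct surgery (Proposition~\ref{p:surgery}): near each such $p$ it produces regular stable leaves $\Sigma_i\to\Sigma$ on the minimizing side (via Proposition~\ref{prop:min-vs-htpy-min} or a Hardt--Simon/Liu argument), glues $\Sigma_i$ to $\Sigma$ across an annulus using a cutoff, and then makes a small conformal change $\tilde g=e^{2f}g$ supported in that annulus to kill the mean curvature of the glued surface. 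This yields a smooth $\tilde g$-minimal $\tilde\Sigma$ for $\tilde g$ arbitrarily close to $g$, with no need to rerun min-max. Openness then comes from White's bumpy metric theorem as you say.

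Two further points. First, the paper's own Theorem~\ref{thm:nm+index_bound} already gives $\Index(V)\leq 1$ in dimension $8$; the Marques--Neves index bound you cite is only proven for $3\leq n+1\leq 7$, so that citation is misplaced (though, since you also invoke the main estimate, it is redundant rather than fatal). Second, the assertion ``$\Ric>0$ forces $\Index(\Sigma)\geq 1$'' is only valid when $\Sigma$ is two-sided. The paper handles the one-sided case separately: if $\Sigma$ is one-sided then $[\Sigma]\neq 0$ in $H_n(M;\ZZ_2)$, so one discards $\Sigma$ and instead minimizes area in that homology class to obtain $\hat\Sigma$, which is locally area-minimizing at every singular point and hence amenable to the same surgery. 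Your argument does not address this case.
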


Without the curvature condition, we have the following partial result. 

  \begin{theorem}[Generic almost regularity in dimension $8$]\label{thm:generic_bound}
  	Let $M^8$ be a compact smooth $8$-manifold. There exists a dense set $\cG \subset \Met^{2,\alpha}(M)$ so that for $g \in \cG$,  there exists a smooth embedded minimal hypersurface $\Sigma \subset M$ with at most one singular point. 
  \end{theorem}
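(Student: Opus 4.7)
The strategy is to establish density directly: given $g_0\in\Met^{2,\alpha}(M)$ and $\epsilon>0$, I will construct a metric $g$ within $\epsilon$ of $g_0$ that admits a minimal hypersurface with at most one singular point.

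First I would run the one-parameter Almgren--Pitts min-max on $(M^8,g_0)$, using (say) a sweepout of $M$ by level sets of a Morse function, to obtain a min-max minimal hypersurface $\Sigma_0$. By the Schoen--Simon regularity theorem the singular set $\sing\Sigma_0$ has Hausdorff dimension zero, and since $M$ is compact it therefore consists of finitely many isolated points $p_1,\dots,p_N$, each carrying a tangent cone. Applying the key estimate from the abstract yields $|\Snm(\Sigma_0)|+\Index(\Sigma_0)\le 1$; in particular $|\Snm(\Sigma_0)|\le 1$, and every $p\in\sing\Sigma_0\setminus\Snm(\Sigma_0)$ has a tangent cone that is area-minimizing on at least one side.

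At each such non-$\Snm$ singular point $p$ I would perform a Hardt--Simon--Smale surgery: the Hardt--Simon foliation of the minimizing half-space provides smooth area-minimizing leaves limiting onto the tangent cone, so a nearby leaf can be glued to $\Sigma_0$ across a small annulus around $p$, producing a hypersurface $\Sigma$ that is smooth near $p$ and coincides with $\Sigma_0$ outside a small ball. A compactly supported conformal change of $g_0$ near $p$, in the spirit of Smale's proof of \cite{Smale}, can then be chosen so that $\Sigma$ is minimal for the new metric (the conformal factor is prescribed so that $\langle \nabla\phi,\nu\rangle=H_\Sigma$ along $\Sigma$, which is solvable locally since $\Sigma$ is smooth there). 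Iterating over the finitely many pairwise disjoint points outside $\Snm(\Sigma_0)$ produces $g$ close to $g_0$ and a minimal hypersurface $\Sigma\subset(M,g)$ whose singular set lies inside $\Snm(\Sigma_0)$, so $|\sing\Sigma|\le 1$.

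The main obstacle is carrying out the Hardt--Simon--Smale surgery in our min-max (non-area-minimizing) setting, since Smale's original argument for \cite{Smale} crucially exploits global minimality of the hypersurface. I expect one can still proceed singularity-by-singularity using only the one-sided minimizing character of the tangent cone (which is precisely what the definition of $\Snm$ excludes), verifying that the local gluing produces an area-critical surface for a locally perturbed conformal metric and that perturbations at distinct singularities can be chosen with disjoint support so that no new singularities are created and minimality elsewhere is preserved. A secondary technical point is ensuring that the one-parameter min-max indeed produces a nontrivial $\Sigma_0$ with the regularity properties invoked above, which should follow from standard Pitts--Schoen--Simon theory combined with the min-max index bound.
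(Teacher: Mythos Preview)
Your overall strategy matches the paper's: run one-parameter min-max, invoke the index-plus-singularities bound, then perturb away the ``good'' singularities by Hardt--Simon--Smale surgery and a local conformal change of metric. The architecture is right.

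The genuine gap is exactly where you flag it, and it is not as easily fillable as you hope. You use only the $\Snm$ estimate from the abstract, which tells you that at each $p\in\sing\Sigma_0\setminus\Snm(\Sigma_0)$ the \emph{tangent cone} is one-sided minimizing. But the Hardt--Simon/Liu mechanism for producing smooth nearby leaves requires the \emph{surface itself} to be locally one-sided minimizing (or at least to admit a sequence of nearby stable minimal hypersurfaces in $\cR$); knowing only that the tangent cone minimizes on one side does not, by itself, hand you a minimization problem in $(M,g)$ whose solution is regular and converges to $\Sigma_0$. The paper closes this gap in two steps. First, Theorem~\ref{thm:nm+index_bound} is proved for the larger set $\hnm(\Sigma)\supset\Snm(\Sigma)$, so that at every $p\notin\hnm(\Sigma)$ the surface is, \emph{by definition}, locally one-sided homotopy minimizing. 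Second, Proposition~\ref{prop:min-vs-htpy-min} upgrades ``homotopy minimizing'' to either genuine local one-sided minimizing, or else produces a sequence $\Sigma_i\in\cR$ of disjoint stable minimal hypersurfaces converging to $\Sigma$; in either case one obtains the smooth nearby leaves needed for the gluing in Proposition~\ref{p:surgery}. Without these two ingredients your surgery step is not justified: you have no way to manufacture the smooth approximants $\Sigma_i$ in the manifold from the cone-level information alone.

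A minor point: your hope to ``proceed singularity-by-singularity using only the one-sided minimizing character of the tangent cone'' would also have to contend with cones that are minimizing but not strictly minimizing in the Hardt--Simon sense, where the foliation argument is more delicate; the paper's route via local surface-level minimality sidesteps this issue.
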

 
 We actually prove more general results valid in all dimensions, see Theorem \ref{thm:generic_stratum} below. 
  
  As mentioned above, the principal motivation for such results is to study generic regularity of non-minimizing, high-dimensional minimal submanifolds. This contrasts with previous works on generic regularity:
\begin{itemize}
    \item Hardt--Simon \cite{HS} (resp.\ Smale \cite{Smale}), cf.\ \cite{Liu}, show that regular singularities of (one-sided) minimizing hypersurfaces can be perturbed away by perturbing the boundary (resp.\ metric). 
    \item White \cite{White:85,white2019generic} shows that minimizing integral 2-cycles are smoothly embedded surfaces for a generic metric.  
    \item Moore \cite{Moore,Moore:book} shows that parametrized minimal (2-dimensional) surfaces are free of branch points for a generic ambient metric. 
\end{itemize}

 \medskip

In fact, our work proves that generically there exists a minimal hypersurface of optimal regularity avoiding \emph{certain} singularities in ambient dimensions beyond the singular dimension. Indeed, Theorem \ref{thm:generic_bound} is a consequence of a more general result stated below.

\begin{theorem}[Generic removability of isolated singularities]\label{thm:generic_stratum}
Consider a compact smooth $(n+1)$-manifold, for $n\geq 7$. There is a dense set $\cG\subset \Met^{2,\alpha}(M)$ with the following properties:
\begin{itemize}
\item If $g\in\cG$ then there exists a minimal hypersurface $\Sigma$, smooth away from a closed singular set of Hausdorff dimension at most $n-7$, so that for $\cS_0 \subset \sing(\Sigma)$ the set of singular points with regular tangent cones, we have $\cH^0(S_0) \leq 1$. 
\item If $g \in \cG \cap \Met^{2,\alpha}_{\Ric>0}(M)$ then the same statement holds, except we can conclude that $\cH^0(\cS_0) = 0$. 
\end{itemize}
%
\end{theorem}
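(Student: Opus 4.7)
The strategy is to combine the main estimate $\cH^0(\Snm(\Sigma)) + \Index(\Sigma) \leq 1$ for one-parameter min-max hypersurfaces with a Hardt--Simon / Smale-type metric perturbation that removes singular points whose tangent cones are area-minimizing on at least one side. Given $g \in \Met^{2,\alpha}(M)$, the first step is to apply one-parameter Almgren--Pitts min-max to produce an almost-embedded minimal hypersurface $\Sigma \subset (M,g)$ with $\dim \sing(\Sigma) \leq n-7$ (by Schoen--Simon regularity) and satisfying the main estimate. The set $\cS_0(\Sigma) \subseteq \sing(\Sigma)$ of singular points admitting a regular tangent cone decomposes as $\cS_0(\Sigma) = \Snm(\Sigma) \sqcup \cS_m(\Sigma)$, where $\cS_m(\Sigma)$ consists of points whose (unique, by Simon) regular tangent cone is area-minimizing on at least one side; by Simon's isolated-singularity theorem this set is discrete and hence finite.

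The second step is to remove $\cS_m(\Sigma)$ by a local metric perturbation. For each $p \in \cS_m(\Sigma)$, the regular tangent cone $C_p$ carries a smooth Hardt--Simon minimal foliation on its minimizing side, and $\Sigma$ is $C^1$-asymptotic to $C_p$ in a small ball $B_p$. Following the metric-perturbation scheme of Smale \cite{Smale} (adapted here from minimizers to min-max critical points), one perturbs $g$ inside $B_p$ so that a smooth leaf of this foliation becomes exactly minimal for the new metric, and then shows that the one-parameter min-max hypersurface $\Sigma'$ for the perturbed metric $g'$ agrees with this smooth leaf inside $B_p$. Since $\cS_m(\Sigma)$ is finite, these perturbations can be performed simultaneously in pairwise disjoint balls. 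Iterating (if the perturbation introduces or uncovers additional singularities) and passing to a limit via Sharp's compactness for one-parameter min-max hypersurfaces with bounded index produces a metric $g' \in \Met^{2,\alpha}(M)$ arbitrarily close to $g$ and an associated min-max hypersurface $\Sigma'$ with $\cS_m(\Sigma') = \emptyset$. Applying the main estimate to $\Sigma'$ gives $\cH^0(\cS_0(\Sigma')) = \cH^0(\Snm(\Sigma')) \leq 1 - \Index(\Sigma') \leq 1$, establishing the first bullet. For the second bullet, if $g \in \Met^{2,\alpha}_{\Ric>0}$ then the perturbation can be taken small enough to preserve positive Ricci, and the Frankel property combined with the Zhou--Zhu multiplicity-one theorem forces $\Index(\Sigma') \geq 1$; the main estimate then yields $\cH^0(\Snm(\Sigma')) \leq 0$, hence $\cH^0(\cS_0(\Sigma')) = 0$.

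The main obstacle is the perturbation step. For area-minimizers, Hardt--Simon \cite{HS} and Smale \cite{Smale} show that singularities with one-sided minimizing tangent cones can be perturbed away and the new minimizer is smooth in the perturbation region; for min-max critical points this is considerably more delicate. One must control three things simultaneously: (i) that the min-max hypersurface for $g'$ actually realizes the glued-in foliation leaf in $B_p$, rather than some new singular configuration produced by the variational problem; (ii) that the Morse index of the perturbed hypersurface does not jump upward in a way that renders the main estimate vacuous (so a continuity/semicontinuity argument for $\Index$ under small metric perturbation is required); and (iii) that no new regular-tangent-cone singularities are created outside $\bigcup_p B_p$. These are the principal technical hurdles, and their resolution relies on local uniqueness near strictly stable Hardt--Simon leaves together with the one-parameter min-max machinery and compactness under $C^{2,\alpha}$-convergence of metrics.
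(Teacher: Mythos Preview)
Your overall strategy---use the main estimate to control $\Snm$, then perturb away the remaining regular singularities $\cS_m$ whose cones minimize on one side---is correct and matches the paper. But the perturbation step as you describe it is both harder than necessary and contains a genuine gap.

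The paper does \emph{not} re-run min-max after perturbing the metric. Instead, given $p \in \cS_0(\Sigma)\setminus\hnm(\Sigma)$, the paper uses the one-sided homotopy-minimizing property (which is part of what Theorem~\ref{thm:nm+index_bound} actually gives, not just the numerical bound) together with Proposition~\ref{prop:min-vs-htpy-min} to produce smooth stable hypersurfaces $\Sigma_i\to\Sigma$ on one side in $B_\eps(p)$; Hardt--Simon then says $\Sigma_i$ is regular near $p$ for $i$ large. One then \emph{directly} glues $\Sigma_i\cap B_{\eps/5}$ into $\Sigma$ via a cutoff and makes a small conformal change $\tilde g = e^{2f}g$, choosing $f$ supported in the gluing annulus so that the glued surface $\tilde\Sigma$ is exactly $\tilde g$-minimal. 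The resulting $\tilde\Sigma$ is a minimal hypersurface for $\tilde g$ with $\cS_0(\tilde\Sigma)\subset\hnm(\Sigma)$, and that is all the theorem asks for: it never claims $\tilde\Sigma$ is the min-max surface for $\tilde g$. Your obstacles (i)--(iii) simply do not arise.

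Your approach, by contrast, requires controlling the min-max critical point of the perturbed metric, and your sketch does not do this. Sharp-type compactness gives subsequential limits of bounded-index hypersurfaces, but does not give uniqueness or continuity of the min-max surface in the metric; there is no mechanism here forcing the min-max surface for $g'$ to pick up your glued-in leaf in $B_p$ rather than jump to an unrelated critical point. The iteration-plus-limit step is likewise unjustified. For the positive Ricci case, your invocation of a multiplicity-one theorem is not what the paper does (and such results are delicate in high dimensions): the paper instead observes that if $\Sigma$ is two-sided then positive Ricci forces $\Index(\Sigma)\geq 1$ directly, while if $\Sigma$ is one-sided it represents a nontrivial $\ZZ_2$-homology class and one can replace $\Sigma$ by an area-minimizer in that class, whose singularities are all locally minimizing and hence removable by the same surgery.
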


 \medskip

In order to remove the topological condition $H_7(M; \mathbb{Z}) \neq 0$ of Smale, we will use the Almgren--Pitts min-max construction \cite{Pitts}, which guarantees the existence of a minimal hypersurface $\Sigma^n$ in a closed Riemannian manifold $(M^{n+1},g)$. As in the area-minimizing case, when the dimension $n$ satisfies $2\leq n \leq 6$, the Almgren--Pitts minimal hypersurface is smooth, but for larger values of $n$ there may be an at most $(n-7)$-dimensional singular set (this follows from work of Schoen--Simon \cite{SS}). However tangent cones to min-max hypersurfaces are \emph{a priori} only stable, while only area-minimizing cones have complements that are foliated by smooth minimal hypersurfaces (cf.\ \cite{BDG, Lawlor}) and it seems that such a foliation is needed (at least on one side) to perturb the singularity away by adjusting the metric  \cite{HS}.

The key technical result of this paper is that (for one-parameter min-max) at all points---except possibly one---of the singular set with a regular tangent cone, the tangent cone is area minimizing on at least one side. Put another way, we show that tangent cones that are not area minimizing on either side ``contribute to the Morse index'' from the point of view of min-max (and these are precisely the cones that we are unable to perturb away using Hardt--Simon \cite{HS}). 

\subsection{Detailed description of results} 
Let $(M^{n+1},g)$ be a closed Riemannian manifold. By a \emph{sweepout} of $M$ we will mean a family of (possibly singular) hypersurfaces $\{ \Phi(x)=\partial \Omega(x)\}_{x \in [0,1]}$, where each hypersurface $\Phi(x)$ is the boundary of an open set
$\Omega(x)$ with $\Omega(0) = \emptyset$ and $\Omega(1) = M$, and we denote the family of such sweepouts by $\cS$ (see Section \ref{sec:min-max} for the precise definition).
The width, $W(M)$, is then defined by 
\[
W(M) = \inf_{\Phi\in \cS} \left\{ \sup_x \bM(\Phi(x))\right\}\,.
\]

  Given a stationary integral varifold $V$, with $\supp V$ regular outside of a set of $n-7$ Hausdorff dimension, we define 
 \[
\mathfrak{h}_\textnormal{nm}(V):= \left\{ p\in \supp(V)\,:\,  \begin{gathered}
\textrm{for all $r>0$ small, $\supp V\cap B_r(p)$ is not one-sided} \\
\text{homotopy area minimizing on either side (in $B_r(p)$).}
\end{gathered}
\right\}\,
 \]
  
In other words, $p\in\mathfrak{h}_\textrm{nm}(V)$ implies that in any small ball there are one-sided homotopies on both sides of $\supp V$ that strictly decrease area without ever increasing area. 
Let $\cR$ denote the set of integral varifolds, 
whose support is a complete embedded minimal hypersurface regular away from a closed singular set of Hausdorff dimension $n-7$.
Finally, we let  $\Index(V)$ denote the Morse index of the regular part of the support of $V$, that is
$$
\Index (V)=\Index (\supp(\reg(V)))\,.
$$
Then the main technical estimate of this paper is the following result.

\begin{theorem}[Index plus non-area minimizing singularities bound]\label{thm:nm+index_bound}
For $n\geq 7$, let $(M^{n+1}, g)$ be a closed Riemannian manifold of class $C^2$. There exists a stationary integral varifold
	$V\in \cR$ such that $|V|(M)=W$, which satisfies 
\begin{equation}\label{e:bound1}
 \cH^{0} (\mathfrak{h}_\textnormal{nm}(V)) +\Index(V) \leq 1\,.
\end{equation}
If equality holds in \eqref{e:bound1}, then for any point $p\in \supp V\setminus \mathfrak{h}_\textnormal{nm}(V)$ there is $\eps>0$ so that $\supp V$ is area-minimizing to one side in $B_\eps(p)$. Finally, we can write $V=\sum_{i}\kappa_i\,|\Sigma_i|$, where $\Sigma_i$ are finitely many disjoint embeddded minimal hypersursufaces smooth away from finitely many points with  $\kappa_i\leq 2$ for every $i$; if $\Sigma_i$ is one-sided then $\kappa_i =2$ and if $\kappa_j=2$ for some $j$ then each $\Sigma_i$ is stable. 
\end{theorem}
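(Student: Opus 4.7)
\medskip\noindent\textbf{Proof proposal.} The plan is to combine one-parameter Almgren--Pitts min-max \cite{Pitts} with a contradiction-based sweepout-modification argument that extends the standard one-parameter index bound to accommodate singular non-area-minimizing points. First, Almgren--Pitts min-max produces a stationary integral varifold $V$ with $|V|(M) = W$ that is almost minimizing in annuli; by the Schoen--Simon regularity theory \cite{SS} in the form needed for stable min-max limits, one obtains $V \in \cR$, so that $\supp V$ is smooth outside a closed set of Hausdorff dimension at most $n-7$.

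To prove \eqref{e:bound1}, I would argue by contradiction: assume $k + \ell \ge 2$ where $k = \cH^0(\hnm(V))$ and $\ell = \Index(V)$. From each index direction, one extracts a compactly supported Jacobi eigenfunction $u_i$ on the two-sided regular part with strictly negative second variation, giving a normal graphical deformation of $\supp V$ whose area decreases quadratically. From each $p_j \in \hnm(V)$, by the very definition of $\hnm$, one extracts two one-sided strictly area-decreasing homotopies $H_j^+, H_j^-$ supported in some $B_{r_j}(p_j)$, one into each side of $\supp V$. By shrinking the $r_j$ and localising the $u_i$, all $k+\ell$ features can be arranged to have pairwise disjoint supports.

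Given this collection, the plan is to modify a near-optimal sweepout $\Phi \in \cS$ at its critical parameter $t_0$ (at which $\Phi(t_0) \approx V$ as varifolds) by splicing in the local modifications. The $\ell$ index eigenfunctions generate a smooth $\ell$-parameter deformation, and each singular feature contributes a continuous one-parameter interpolation passing through $V$ and connecting a $+$-side to a $-$-side configuration of strictly smaller mass. Organising the resulting $(k+\ell)$-parameter family into a single one-parameter loop (for instance the boundary of a $(k+\ell)$-cube collapsed to $[0,1]$) yields a replacement sweepout $\Phi' \in \cS$ with $\sup_t \bM(\Phi'(t)) < W$, contradicting the definition of $W$; disjointness of supports ensures that the area variations add essentially linearly, so the mass decrease is strict along the whole loop. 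The main obstacle is precisely this splicing step: the smooth graphical index deformations must be combined with the \emph{topological} singular homotopies into a single sweepout that is continuous in flat/varifold norm and whose mass is strictly controlled throughout the interpolation, not only at its endpoints. Simultaneously ensuring continuity, strict mass decrease, and disjointness of supports is the central technical challenge, and is where the novelty beyond classical Marques--Neves-type index deformation arguments lies.

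The remaining assertions are comparatively routine. The clause concerning $p \in \supp V \setminus \hnm(V)$ is immediate by negating the definition of $\hnm$. For the multiplicity structure, a sheeting argument based on Schoen--Simon compactness \cite{SS} gives the decomposition $V = \sum \kappa_i |\Sigma_i|$ with $\Sigma_i$ disjoint embedded minimal hypersurfaces; a one-sided $\Sigma_i$ is forced to carry multiplicity $2$ (otherwise a deformation of its two-sided double cover would strictly decrease area in a way incompatible with $V$ being a min-max limit), while the uniform bound $\kappa_i \le 2$ and the stability dichotomy in the presence of a multiplicity-two component both follow from the index estimate already established, since an unstable direction on a multiplicity-two two-sided sheet combined with any other unstable direction would give $\Index(V) \ge 2$, contradicting \eqref{e:bound1}.
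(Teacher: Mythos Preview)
Your proposal correctly identifies the overall architecture --- produce a min-max varifold, then argue by contradiction that two disjoint ``bad'' regions would let you push the sweepout strictly below $W$ --- but the step you flag as ``the central technical challenge'' is in fact the entire content of the theorem, and you do not solve it. The difficulty is exactly that for a generic near-optimal sweepout $\Phi$, the slice $\Phi(t_0)$ is only close to $V$ as a varifold, and may cross $\supp V$ many times; there is no obvious way to splice in \emph{one-sided} homotopies $H_j^\pm$ into such a $\Phi$ while keeping the family continuous and the mass strictly below $W$. The paper's resolution is not a clever splicing of an arbitrary sweepout but a structural replacement: one first builds an \emph{optimal nested volume-parametrized} sweepout (via the monotonization of \cite{CL} plus Arzel\`a--Ascoli), and then refines it to a \emph{non-excessive} sweepout, meaning no critical parameter admits a local replacement with mass below $W$. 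Nestedness is exactly what makes the gluing of one-sided homotopies trivial (intersections and unions with a fixed set preserve nestedness and do not increase perimeter past a minimizer), and non-excessiveness converts the existence of two disjoint area-decreasing regions directly into a contradiction. Without these two ingredients your contradiction argument does not close.

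Two further points. First, the equality-case clause is \emph{not} immediate from negating the definition of $\hnm$: the negation only gives that $\supp V$ is one-sided \emph{homotopy} minimizing in some small ball, whereas the theorem asserts genuine one-sided \emph{area}-minimizing. Passing from the former to the latter is nontrivial (and indeed the paper only obtains it via the full deformation argument of Proposition~\ref{p:pairs}, not from the definition). Second, your multiplicity and stability arguments are not right: multiplicity $\kappa_i\ge 3$ does not by itself produce an index direction, so you cannot feed it into \eqref{e:bound1}. The paper instead treats the multiplicity bound by a direct disk-comparison (three or more sheets over a small ball have more area than the two boundary caps, producing an excessive interval), and obtains stability in the higher-multiplicity case via a separate \emph{cancellation} analysis ($\bM(\Phi(x_0))<W$) that your proposal does not mention at all.
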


The above bound is valid in all dimensions and can be seen as a generalization of the work of Calabi--Cao concerning min-max on surfaces \cite{CaCa}. Indeed if we define  $\Snm(V)$ by\footnote{Here $\omega$ is a modulus of continuity, and we could take it to be logarithmic, as suggested by the work of \cite{Simon}.  Notice in fact that at all isolated singularities $\cS_0$, minimal surfaces have unique tangent cone and are locally $C^{1,\log}$ deformation of the cone itself.}
\[
\Snm(V):= \left\{ p\in \supp(V)\,:\, \begin{gathered}
V \text{ is locally a $C^{1,\omega}$ graph over its \emph{unique} tangent cone $\cC$}\\
\text{at $p$ and both sides of $\cC$ are not one-sided minimizing} 
\end{gathered}  
\right\}\,
\]
then we will see that $\Snm(V) \subset \mathfrak{h}_\textrm{nm}(V)$ in Lemma \ref{lem:snm-hnm}. In particular, \eqref{e:bound1} implies that
\[
\cH^0(\Snm(V)) + \Index(V) \leq 1.
\]
Thus, if we are guaranteed to have $\Index(V) =1$ (e.g., in positive curvature) we see that $\Snm(V) = \emptyset$. This is precisely the higher dimensional analogue of the result of Calabi--Cao (cf. Figure \ref{fig:starfish} and the remark below). 

See also the more recent work of Mantoulidis \cite{mantoulidis} which makes a more explicit connection with Morse index, using the Allen--Cahn approach (as developed by Guaraco and Gaspar \cite{Guaraco,GasparGuaraco}) rather than Almgren--Pitts; it would be interesting to elucidate the relationship between Mantoulidis's Allen--Cahn techniques and our proof of Theorem \ref{thm:nm+index_bound}.

\begin{figure}[h]
	\centering
	\includegraphics[scale=0.4]{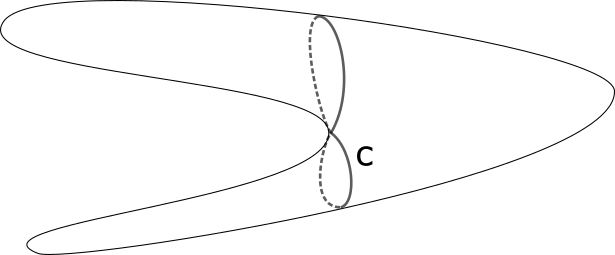}
	\caption{The figure eight geodesic $c$ is an 
	example of a min-max closed geodesic
	that is stable and has one singularity 
	with non-area minimizing tangent cone.}
	\label{fig:starfish}
\end{figure}

\begin{remark}
By the index bound in Theorem \ref{thm:nm+index_bound}, any tangent cone to $V$ has stable regular part. Moreover, we note that the Simons cones \cite{Simons} in $\mathbb{R}^8$ (formed from products of two spheres) are all stable and area minimizing on (at least) one side (cf.\ \cite{Lawlor}). We particularly emphasize that the Simons cone 
\[
\mathbf{C}^{1,5}: = \{(x,y) \in \RR^2\times \RR^6 : 5|x|^2 = |y|^2\}
\]
is one-sided minimizing (and stable), but is not minimizing on the other side. It seems to be an open question whether or not there exists an $n$-dimensional stable cone that does not minimize area on either side, for $n\geq 7$. 

Even assuming the existence of a stable minimal cone which is not area minimizing on either sides, it is hard to decide if the above bound is optimal. In dimension $n=1$, such an example is provided by the classical starfish example (cf.\ Figure \ref{fig:starfish}), whose tangent cone at the singular point (the union of two lines through the origin) is indeed stable non-area minimizing on either sides (and the starfish fails to be one-sided homotopy minimizing on either side).

We conjecture that if there is a regular stable minimal cone that is not area-minimizing on either side, then it can arise as the tangent cone to a min-max minimal hypersurface (possibly in a manifold geometrically similar to the starfish); note that were this to occur, Theorem \ref{thm:nm+index_bound} would imply that the resulting hypersurface would necessarily be stable.
\end{remark}

Theorem \ref{thm:nm+index_bound} generalizes the index upper bound of Marques and Neves \cite{MN16} for Riemannian manifolds $M^{n+1}$, $3\leq n+1\leq 7$ (see also \cite{Zhou-reg-ind}).
In recent years there has been tremendous progress in the understanding of the geometry of minimal hypersurfaces constructed using min-max methods in
these dimensions
(see \cite{DL}, \cite{MN19}, \cite{CM}, \cite{Zh19}, \cite{So} and references therein).

For manifolds of dimension $n+1\geq 8$ much less is known. 
When Ricci curvature is positive
Zhou obtained index and multiplicity bounds for one-parameter min-max minimal hypersurface \cite{Z17} (see also the work of  Ram\'irez-Luna \cite{RL} and Bellettini \cite{bellettini}). Upper Morse index bounds are known to hold in arbitrary manifolds of any dimensions for hypersurfaces constructed by Allen--Cahn, as proven by Hiesmayr and Gaspar \cite{Hiesmayr,Gaspar} (see also the recent work of Dey showing that the Almgren--Pitts and Allen--Cahn approaches are equivalent \cite{dey}). Li proved \cite{li2019} existence 
of infinitely many distinct minimal hypersurfaces constructed 
via min-max methods for a generic set of metrics, using the
Weyl law of Liokumovich--Marques--Neves \cite{LMN}.

\subsection{Overview of the proof}
The construction of a minimal hypersurface in Almgren-Pitts min-max theory proceeds by considering a sequence of sweepouts $\{ \Phi_i(x) \}$
with the supremum of the mass $\sup_x \bM(\Phi_i(x)) \rightarrow W(M)$ as $i \rightarrow \infty$.
It is then proved that we can find a subsequence $\{i_k\}$
and $\{\Phi_{i_k}(x_k) \}$ with mass tending
to $W$, so that $|\Phi_{i_k}|(x_k)$ converges to some $V \in \cR$.

We outline the proof of Theorem \ref{thm:nm+index_bound}.
For the sake of simplicity, let's focus on the non-cancellation case, i.e., when all multiplicities of $V$ are one (in the case of cancellation we must argue slightly differently but the main strategy is the same).
The main geometric idea is to show that there cannot be two disjoint open sets $U_1,U_2$ so that $\Sigma=\supp V$ fails to be one-sided homotopy minimizing on 
the same side in both $U_1$ and $U_2$. 
This property is reminiscent of (but different from) 
almost minimizing 
property introduced by Pitts to prove regularity
of min-max minimal hypersurfaces.

Granted this fact, it is easy to deduce the bound \eqref{e:bound1}. For example, if $\Index(\Sigma) = 1$ and $\hnm(\Sigma) = \{p\}$, then we can localize the index in some $U$ disjoint from $p$. Because $\Sigma$ is unstable in $U$, we can find area decreasing homotopies to both sides there, and we can also find $B_r(p)$ disjoint from $U$ with area decreasing homotopies (by definition). This contradicts the above fact. 

As such, we want to show the one-sided homotopy minimizing property in pairs by using the fact that $V$ is a min-max minimal hypersurface. However, this leads us to a major difficulty. Indeed, the approximating currents $\Phi_{i_k}(x_k)$ might cross $\Sigma$ many times, making it difficult to glue in one-sided homotopies to push down the mass. 

At a technical level, the main tool used in this paper is that it is possible to 
 simplify the one-parameter case of min-max 
 theory by constructing a nested optimal sweepout $\Phi(x)$ with $\sup \bM(\Phi(x)) = W$.
This allows us to work with one
sweepout $\Phi(x)$
instead of a sequence of sweepouts. The nested property allows us to directly ``glue in'' the one-sided homotopies to push down the mass.

The existence of a nested optimal
sweepout follows from a monotonization
technique from \cite{CL}.
There Chambers and Liokumovich proved that
each sweepout
$\Phi_i(x)$ can be replaced by a nested sweepout 
$\Psi_i(x)$ with $\sup \bM(\Psi_i(x)) \leq \sup
\bM(\Psi_i(x)) + \frac{1}{i}$. ``Nested'' here means 
that $\Psi_i(x) = \partial \Om(x)$ for a family of  open sets with $\Om(x) \subset \Om(y)$ if $x<y$. The proof
used ideas of Chambers and Rotman \cite{CR} on existence of monotone homotopies of closed curves on surfaces.

After we reparametrize $\Psi_i(x)$
by the volume swept out we obtain a sequence of families that is uniformly Lipschitz in flat topology. By Arzel\`a--Ascoli a subsequence will converge to an optimal sweepout.

In the Almgren--Pitts theory, a ``pull-tight'' procedure is used to find a varifold achieving the width with good properties. We can apply this procedure to our sweepout to deduce that one of the critical varifolds in the sweepout is a smooth (up to small singular set) minimal hypersurface. We would then like to prove that this hypersurface satisfies \eqref{e:bound1}. However, this poses another issue, namely that it could a priori be possible to push the mass of the sweepout near this critical value down, while not decreasing the global width. This could lead to an infinite sequence of ``pushing down'' operations that could create extra critical points not dealt with previously.

As such, our second main technical tool is that we construct an 
optimal sweepout $\Phi(x)$ with the following special property. 
For every point $x_0$ 
with 
$\sup_{x\in U} \bM(\Phi(x)) = W$
for every neighbourhood $U$ of $x_0$, 
there \emph{does not} exist an open interval $I \ni x_0$
and a family 
$\{\Phi'(x)\}$ that coincides with $\Phi$ for $x\notin I$ and satisfying $\sup_{x \in I'} \bM(\Phi'(x)) < W$ for every closed interval $I' \subset I$.
(In fact, we will prove a somewhat stronger property
that holds for open, half-open and closed intervals $I$).
In other words, we can not make a
small ``dip'' in the graph of $\bM(x)$,
pushing it below $W$
in the neighbourhood of $x_0$.

This property of $\Phi$ allows us to easily prove the ``homotopy minimizing to one-side'' property of $V$ discussed above (and thus Theorem \ref{thm:nm+index_bound}). A diagram of the procedure to prove this can be found in Figure \ref{fig:two-homotopies-push-down}. 

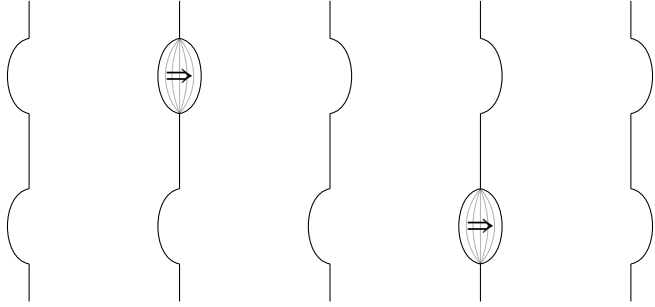
\begin{figure}
\begin{tikzpicture}

\begin{scope}[shift={(-2,0)}]
\draw (0,0) -- (0,.5);
\draw (0,-1) to [bend left = 80]  (0,0);
\draw (0,-2) -- (0,-1);
\draw (0,-3) to [bend left = 80]  (0,-2);
\draw (0,-3.5) -- (0,-3);
\end{scope}

\begin{scope}
\draw (0,0) -- (0,.5);
\draw (0,-1) to [bend left = 80]  (0,0);
\draw [opacity=.3] (0,-1) to [bend left = 40]  (0,0);
\draw [opacity=.3] (0,-1) to [bend left = 20]  (0,0);
\draw [opacity=.3]  (0,-1) to [bend left = 0]  (0,0);
\draw [opacity=.3] (0,-1) to [bend left = -20]  (0,0);
\draw [opacity=.3] (0,-1) to [bend left = -40]  (0,0);
\node at (0,-.5) {$\Rightarrow$};
\draw (0,-1) to [bend left = -80]  (0,0);
\draw (0,-2) -- (0,-1);
\draw (0,-3) to [bend left = 80]  (0,-2);
\draw (0,-3.5) -- (0,-3);
\end{scope}

\begin{scope}[shift={(2,0)}]
\draw (0,0) -- (0,.5);
\draw (0,-1) to [bend left = -80]  (0,0);
\draw (0,-2) -- (0,-1);
\draw (0,-3) to [bend left = 80]  (0,-2);
\draw (0,-3.5) -- (0,-3);
\end{scope}

\begin{scope}[shift={(4,0)}]
\draw (0,0) -- (0,.5);
\draw (0,-1) to [bend left = -80]  (0,0);
\draw (0,-2) -- (0,-1);
\draw (0,-3) to [bend left = 80]  (0,-2);
\draw [opacity=.3] (0,-3) to [bend left = 40]  (0,-2);
\draw [opacity=.3] (0,-3) to [bend left = 20]  (0,-2);
\draw [opacity=.3] (0,-3) to [bend left = 0]  (0,-2);
\draw [opacity=.3] (0,-3) to [bend left = -20]  (0,-2);
\draw [opacity=.3] (0,-3) to [bend left = -40]  (0,-2);
\draw (0,-3) to [bend left = -80]  (0,-2);
\node at (0,-2.5) {$\Rightarrow$};
\draw (0,-3.5) -- (0,-3);
\end{scope}

\begin{scope}[shift={(6,0)}]
\draw (0,0) -- (0,.5);
\draw (0,-1) to [bend left = -80]  (0,0);
\draw (0,-2) -- (0,-1);
\draw (0,-3) to [bend left = -80]  (0,-2);
\draw (0,-3.5) -- (0,-3);
\end{scope}
\end{tikzpicture}
\caption{A diagram of how to use two disjoint regions that are not one-sided homotopy minimizing to either side to push down the mass. The ``bumps'' have smaller mass than the original surface and are homotopic along one-sided homotopies that never increase mass. By doing one homotopy fully and then the other, we can see that the mass is always strictly decreased in this range. This can then be glued into a nested sweepout without increasing mass of the adjacent leaves. }\label{fig:two-homotopies-push-down}
\end{figure}


\medskip

It thus remains to explain how we perturb remaining singularities. Hardt-Simon (cf. \cite{HS}) proved that one sided perturbations of area minimizing cones with isolated singularities are smooth, and Liu extended this result to one sided stationary minimizers (cf. \cite{Liu}). The proofs of the bounds in Theorems \ref{thm:generic_bound_ricci}, \ref{thm:generic_bound}, and \ref{thm:generic_stratum} are then obtained by combining a weak generalization of White \cite{W}, where ``stability'' is replaced by the stronger hypothesis that the surface is homotopic minimizing to one side, with a simple surgery procedure in an annulus around the singularity, to show that singular points with regular tangent cones in $\sing(V) \setminus \hnm(V)$ are not generic. As such, the main results follow from \eqref{e:bound1} when combined with a result of Simon on uniqueness of the blow up at certain singularities \cite{Simon}.  The rest of the theorem follows from the index lower bound in manifolds with positive Ricci curvature.

\subsection{Organization of the paper} The paper is divided into four sections. The first section contains the basic definitions and some useful geometric tools. The second section introduces the notion of non-excessive optimal sweepouts, which is a key idea in the present work. The third section proves Theorem \ref{thm:nm+index_bound}, while the last section is dedicated to the proofs of  Theorems \ref{thm:generic_bound} and \ref{thm:generic_stratum}.

\subsection{Remark} Since the first version of this paper appeared, Li--Wang have shown in a remarkable work \cite{LiWang} that the results of this paper can be combined with \cite{Wang2020} and some further geometric arguments to prove a version of Theorem \ref{thm:generic_bound_ricci} for generic metrics without any curvature assumption.

\subsection{Acknowledgements} We are grateful to Zhihan Wang for pointing out an error in the original version of this paper and for pointing out an interesting alternative approach to prove Theorem \ref{thm:generic_bound_ricci}.
We are also grateful to Zhichao Wang for carefully reading an earlier version of this paper and making several helpful suggestions.
We would like to thank the anonymous referees for numerous corrections. 

O.C. was partially supported by a Terman Fellowship, a Sloan Fellowship, and an
NSF grant DMS-1811059/2016403. Y. L. was partially supported by NSERC Discovery grant. L. S. acknowledges the support of the NSF grant DMS-$1951070$.  

\section{Geometric preliminaries}
 
In this section we introduce the main notations, we prove the existence of optimal nested sweepouts and we recall some useful geometric tools.

\subsection{Notations} Let $(M^{n+1},g)$ be a closed Riemannian manifold. By scaling, it suffices to consider $\Vol(M,g) = 1$, which we will always assume below. We use $\mathcal{Z}_{n}(M; \mathbb{Z}_2)$ to denote
the space of mod 2 flat cycles in $M$. The topology on the space $\mathcal{Z}_{n}(M; \mathbb{Z}_2)$ is induced by the usual flat norm $\cF$.

We will make extensive use of the notion of \emph{Caccioppoli set}. A measurable set $E\subset M$ is Caccioppoli if 
\[
\Per(E):=\sup\left\{ \int_M \chi_E\,{\rm div} \omega\,:\, \omega\in X(M)\,,\,\,\|\omega\|_{C^0}\leq 1  \right\}<\infty\,,
\]
where $\chi_E$ denotes the indicator function of $E$. By De Giorgi's strutcure theorem we have that the distributional derivative $D\chi_E$ (which is is a Radon measure) of a set of finite perimeter $E$ is given by $D\chi_E=\nu_E\, \cH^{n}\res \partial^* E$, where $\partial^* E$ is the reduced boundary of $E$, which is a $n$-rectifiable set, and $\nu_E$ is the normal direction to $\partial^*E$ pointing outside $E$ defined $\cH^{n}$-a.e.. This allows us to identify $D\chi_E$ with an element of $\mathcal{Z}_{n}(M; \mathbb{Z}_2)$, which we will abuse notation and denote by
\[
\partial E := \nu_E\, \cH^{n}\res \partial^* E\,.
\]

In particular, with this identification we have
$$
\bM(\partial E)=\Per(E)\qquad\text{and}\qquad \cF(\partial E, \partial F)=\| \chi_E -\chi_F\|_{L^1}=\Vol(E\Delta F)\,,
$$
where $E\Delta F$ denotes the symmetric difference between two sets. As usual, the perimeter of $E$ in an open set $U$, denoted by $\Per(E\,|\,U)$, is the total variation of $D\chi_E$ in the set $U$.

We let $\cV$ denote the set of varifolds of $(M^{n+1},g)$. Given a cycle $\Gamma\in \mathcal{Z}_{n}(M; \mathbb{Z}_2)$, we will write $|\Gamma|$ for the associated integral varifold. In particular if $\Gamma=\partial \Omega$, then 
$$
|\Gamma|=\cH^{n}\res \partial^*\Omega\,\otimes\, \delta_{T\partial^*\Omega}\qquad \text{and}\qquad
\mu_{\Gamma}:=\cH^{n}\res \partial^*\Omega
$$ 
is the total variation measure of the measure $D\chi_\Omega$. Finally, given a set $\Sigma$ regular outside a set of dimension $n-7$, we will denote with $|\Sigma|$ the associated integral varifold.

\subsection{Optimal nested sweepouts} We start by recalling the notion of sweepouts that we will use in this paper.

\begin{definition}[Sweepout]
	A sweepout of $M$ is a map $\Phi: [0,1] \rightarrow \mathcal{Z}_{n}(M; \mathbb{Z}_2)$ continuous in $\cF$-topology such that $\Phi(x) = \partial \Omega(x)$, where $\{\Omega(x) \,:\, x\in [0,1]\}$ is a family of Caccioppoli sets with $\Omega(0)$ the $0$-cycle and $\Omega(1) = M$.  We will denote with $\cS$ the collection of all such sweepouts.
	Moreover, we define the width $W$ to be
\[
W = \inf_{\Phi \in \cS} \,\sup_{x\in [0,1]} \bM(\Phi(x)) \,.
\]
	It is a consequence of the isoperimetric inequality that $W >0$ \cite[Proposition 0.5]{DL}.
\end{definition}

We will switch freely between the equivalent notation $\bM(\Phi(x))$ and $\Per(\Omega(x))$. We now  introduce the notion of optimal nested sweepouts and prove their existence.

\begin{definition}[Optimal nested volume parametrized (ONVP) sweepout]
	A sweepout $\{ \Phi(x) = \partial \Omega(x) \,:\, x\in[0,1]\}$ is called
	\begin{itemize}
		\item \textit{optimal} if $\sup_{x\in [0,1]} \bM(\Phi(x)) = W$;
		\item \textit{nested} if $\Omega(x_1) \subset \Omega(x_2)$, for all $0\leq x_1 \leq x_2\leq 1$;
		\item \textit{volume parametrized} if $\Vol(\Omega(x)) = x$, for every $x\in [0,1]$ (recall that we have assumed $\Vol(M,g) = 1$). 
	\end{itemize}
\end{definition}

Nested volume parametrized sweepouts enjoy nice compactness properties.

\begin{lemma}[Compactness for nested volume parametrized sweepouts] \label{nested sequence}
	Let $\left( \Phi_i\right)_i$ be a sequence of nested volume-parametrized sweepouts
	with mass uniformly bounded, that is
	\begin{equation}\label{e:mass_bounds}
     \sup_{i\in \mathbb N} \sup_{x\in [0, 1]} \bM(\Phi_i(x))\leq M<\infty\,.
	\end{equation}
	Then there exists a subsequence $\left(\Phi_{i_k}\right)_k$ converging uniformly to a nested volume parametrized sweepout $\Psi$ such that
	\begin{equation}\label{e:sup_optimal}
		\sup_x \bM(\Psi(x)) \leq \liminf_k \left(\sup_x \bM( \Phi_{i_k}(x) )\right).
	\end{equation}
\end{lemma}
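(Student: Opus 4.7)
The plan is to exploit the observation that nesting together with volume parametrization forces the map $x \mapsto \Phi_i(x)$ to be uniformly $1$-Lipschitz in the flat norm (and indeed in $L^1$ on the level of indicator functions), which reduces the lemma to a standard Arzelà--Ascoli plus BV-compactness argument.

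First I would observe that for any $0\leq x\leq y\leq 1$ and any $i$, nesting gives $\Omega_i(x)\subset\Omega_i(y)$ and volume parametrization gives $\Vol(\Omega_i(y)\setminus\Omega_i(x))=y-x$. Viewed as an $(n+1)$-chain, the Caccioppoli set $\Omega_i(y)\setminus\Omega_i(x)$ has boundary $\Phi_i(y)-\Phi_i(x)$ (mod $2$) and mass $y-x$, so it is an admissible filling and consequently
\[
\cF(\Phi_i(x),\Phi_i(y))\leq\Vol\bigl(\Omega_i(y)\setminus\Omega_i(x)\bigr)=y-x,
\]
while the corresponding indicator functions satisfy $\|\chi_{\Omega_i(x)}-\chi_{\Omega_i(y)}\|_{L^1}=y-x$. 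By the standard compactness theorem for mod-$2$ flat cycles of mass bounded by $M$, for each fixed $x$ the sequence $\{\Phi_i(x)\}_i$ is precompact in $\cF$. A diagonal extraction over a countable dense subset $D\subset[0,1]$ then produces a subsequence $\{\Phi_{i_k}\}$ converging in $\cF$ at every $x\in D$; using the uniform $1$-Lipschitz bound, the limit extends by uniform continuity to a $1$-Lipschitz map $\Psi\colon[0,1]\to\mathcal{Z}_n(M;\ZZ_2)$, and Arzelà--Ascoli (valid here because for each $x$ the values $\Phi_i(x)$ lie in a compact subset of $\mathcal{Z}_n(M;\ZZ_2)$) upgrades this to uniform convergence $\Phi_{i_k}\to\Psi$ on $[0,1]$.

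Next I would identify $\Psi(x)=\partial\Omega(x)$ for a nested volume-parametrized family of Caccioppoli sets. Because the $\chi_{\Omega_{i_k}(x)}$ are uniformly bounded in $BV$ by \eqref{e:mass_bounds} and uniformly $1$-Lipschitz in $L^1$ in the variable $x$, a further diagonal extraction combined with the compactness theorem for sets of finite perimeter yields $L^1$ limits $\chi_{\Omega(x)}$ for every $x\in[0,1]$. The limit sets $\Omega(x)$ inherit nesting $\Omega(x)\subset\Omega(y)$ for $x\leq y$, the volume identity $\Vol(\Omega(x))=x$, and the endpoint conditions $\Omega(0)=\emptyset$, $\Omega(1)=M$ directly from the $L^1$ convergence. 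Since $\cF(\partial E,\partial F)\leq\|\chi_E-\chi_F\|_{L^1}$, the $L^1$ limit of indicators is consistent with the flat limit already produced, so $\Psi(x)=\partial\Omega(x)$ and $\Psi$ is a nested volume-parametrized sweepout. Finally, \eqref{e:sup_optimal} follows from pointwise lower semicontinuity of mass under flat convergence: for each $x_0\in[0,1]$,
\[
\bM(\Psi(x_0))\leq\liminf_k\bM(\Phi_{i_k}(x_0))\leq\liminf_k\sup_{x\in[0,1]}\bM(\Phi_{i_k}(x)),
\]
and taking the supremum over $x_0$ on the left-hand side gives the claim.

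The main technical point to be careful with is coordinating the two diagonal extractions so that the flat limit of $\Phi_{i_k}(x)$ and the $L^1$ limit of $\chi_{\Omega_{i_k}(x)}$ agree for \emph{every} $x\in[0,1]$, not just on the countable set $D$. The two uniform Lipschitz estimates above together with the comparison $\cF(\partial E,\partial F)\leq\|\chi_E-\chi_F\|_{L^1}$ make this coherence automatic: once the common subsequence is extracted on a dense set, the uniform equicontinuity forces agreement at every $x$.
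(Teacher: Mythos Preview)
Your proof is correct and follows essentially the same approach as the paper: deduce the uniform $1$-Lipschitz estimate from nesting plus volume parametrization, apply Arzel\`a--Ascoli, and conclude \eqref{e:sup_optimal} from lower semicontinuity of mass under flat convergence. Your version is simply more detailed than the paper's, which asserts the Arzel\`a--Ascoli step directly and leaves implicit the verification that the limit is again nested and volume parametrized (you supply this via the parallel $L^1$ compactness of the indicator functions).
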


\begin{proof} The sequence of continuous functions $\Phi_i\colon [0,1] \to \mathcal{Z}_{n-1}(M; \mathbb{Z}_2)$ is uniformly Lispchitz continuous, since for every $0\leq x<y\leq 1$ we have
	$$
	\cF(\Phi_i(x), \Phi_i(y))\leq \Vol( \Omega_i(y)\setminus\Omega_i(x)) = \Vol(\Omega_i(y))-\Vol(\Omega_i(x))=y-x
	$$
	and $\Phi_i(0)=\emptyset$ for every $i$, so by Arzel\`a--Ascoli Theorem there exists a subsequence $\Phi_{i_k}$ and a nested volume parametrized sweepout $\Psi\colon [0,1] \to \mathcal{Z}_{n}(M; \mathbb{Z}_2)$ such that $\Phi_{i_k}$ converges uniformly to $\Psi$. Then \eqref{e:sup_optimal} follows from \eqref{e:mass_bounds} and the lower semi-continuity of $\bM$ with respect to the flat topology.
\end{proof}

Optimal nested volume parametrized sweepouts exist.

\begin{theorem}[Existence of (ONVP) sweepouts] \label{nested exists}
	For any closed Riemannian manifold $(M,g)$ there exists an optimal 
	nested volume-parametrized sweepout.
\end{theorem}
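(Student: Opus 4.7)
The plan is to build an (ONVP) sweepout as a uniform limit of successively better nested, volume-parametrized sweepouts extracted from a minimizing sequence, combining the monotonization procedure of Chambers--Liokumovich \cite{CL} with the compactness result of Lemma \ref{nested sequence}.

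First I would fix a minimizing sequence $\tilde\Phi_i\in\cS$ with $\sup_x \bM(\tilde\Phi_i(x))\to W$, and invoke \cite{CL} (as described in the overview) to replace each $\tilde\Phi_i$ by a nested sweepout $\hat\Phi_i(x)=\partial\hat\Om_i(x)$, so $\hat\Om_i(x)\subset \hat\Om_i(y)$ for $x\leq y$, with
\begin{equation*}
\sup_{x\in[0,1]}\bM(\hat\Phi_i(x))\leq \sup_{x\in[0,1]}\bM(\tilde\Phi_i(x))+\tfrac{1}{i}.
\end{equation*}
Thus the sup-masses of $\hat\Phi_i$ still tend to $W$ and are uniformly bounded.

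Second I would reparametrize each $\hat\Phi_i$ by enclosed volume. The map $\varphi_i(x):=\Vol(\hat\Om_i(x))$ is non-decreasing and, by continuity of $\hat\Phi_i$ in the flat topology combined with nesting (which identifies $\cF(\hat\Phi_i(x),\hat\Phi_i(y))$ with $\Vol(\hat\Om_i(y)\setminus\hat\Om_i(x))=\varphi_i(y)-\varphi_i(x)$), continuous, with $\varphi_i(0)=0$ and $\varphi_i(1)=1$. On any plateau $[a,b]$ of $\varphi_i$, nesting forces $\hat\Om_i(x)=\hat\Om_i(a)$ a.e.\ for $x\in[a,b]$, so after collapsing plateaus one obtains a well-defined nested family $\Phi_i(x)=\partial\Om_i(x)$ with $\Vol(\Om_i(x))=x$ and unchanged sup-mass. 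Finally I would apply Lemma \ref{nested sequence} to $(\Phi_i)$: a subsequence converges uniformly to a nested volume-parametrized sweepout $\Psi$ satisfying
\begin{equation*}
\sup_x\bM(\Psi(x))\leq \liminf_k \sup_x\bM(\Phi_{i_k}(x))=W,
\end{equation*}
and since $\Psi\in\cS$ the reverse inequality is immediate, yielding optimality.

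The main ingredient, and the real obstacle if one tried to prove the theorem from scratch, is the monotonization step: producing nested approximations without increasing the max mass is the deep content and is delegated to \cite{CL}. Once nested sweepouts are in hand, the volume reparametrization is essentially a Lipschitz rescaling and the compactness is already packaged in Lemma \ref{nested sequence}. One technical point worth checking is that collapsing the plateaus of $\varphi_i$ preserves flat-continuity of the reparametrized family; this follows because $\hat\Phi_i$ is constant on each plateau, so the reparametrization is the composition of a continuous map with a monotone quotient and remains continuous.
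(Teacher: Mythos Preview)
Your argument is correct and follows essentially the same route as the paper: take a minimizing sequence, monotonize via \cite{CL}, reparametrize by volume, and pass to a limit using Lemma~\ref{nested sequence}. The only difference is cosmetic: the paper invokes the specific version of \cite[Theorem~1.4]{CL} that produces the nested sweepouts as level sets of Morse functions, so the volume function is automatically strictly increasing and invertible, whereas you handle the reparametrization directly by collapsing plateaus.
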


\begin{proof}
	Let $ \{\Psi_i\}_i$ be a min-max sequence of sweepouts with $\lim_{i \rightarrow \infty }\sup_x \bM(\Psi_i(x)) = W$.
	By \cite[Theorem 1.4]{CL}
	we can replace $\{\Psi_i(x)\}_i$ by a  sequence of nested sweepouts $\{\Phi_i\}_i$, such that $\Phi_i(x) = f_i^{-1}(x)$ for
	a surjective Morse function $f_i: M \rightarrow [0,1]$ and $\lim_{i \rightarrow \infty }\sup_x \bM(\Phi_i(x)) = W$. 
	Let $\phi_i(x) = \Vol (f_i^{-1}([0,x]))$.
	Since $f_i$ is a Morse function we have that $\phi_i: [0,1] \rightarrow [0,1]$ is a continuous strictly increasing function. Then $\left( \Phi_i\circ \phi_i^{-1}\right)_i$ is a sequence
	of nested volume-parametrized sweepouts.
	By Lemma \ref{nested sequence}, a subsequence of $\{ \Phi_i\circ \phi_i^{-1}\}_i$ converges 
	to a (ONVP) sweepout.
\end{proof}

Finally we recall the definition of critical set for a sweepout. 

\begin{definition}[Critical set]
	Given a sweepout $\Phi$, we define
	$$
	M(x) = \limsup_{r \rightarrow 0} \{\bM(\Phi(y)): |y-x| < r\}.
	$$
	If $\Phi$ is an optimal sweepout, we define the \emph{critical domain of $\Phi$} to be the set 
	$$
	\mm(\Phi)=\{ x \in [0,1]: M(x) =W \}\,.
	$$
	We will say that a sequence $x_i \rightarrow x \in \mm(\Phi)$ is a \emph{min-max sequence} if $|\Phi|(x_i)$ converges in the varifold sense to a varifold $V$ of mass $W$, i.e. $|V|(M)=W$. We denote the set of such varifolds by $\C(\Psi)$. 
\end{definition}

In fact, it is convenient to refine this definition somewhat. 

\begin{definition}[Left and right critical set]
Given a sweepout $\Phi$, we say that $x\in \mm_L(\Phi)$ if there are $x_i\nearrow x$ with 
\[
\bM(\Psi(x_i)) \to W
\]
and similarly, $x\in \mm_R(\Phi)$ if there are $x_i\searrow x$ with 
\[
\bM(\Psi(x_i)) \to W. 
\]
\end{definition}

Note that $\mm(\Phi) = \mm_L(\Phi) \cup \mm_R(\Phi)$ (and $\mm_L(\Phi) \cap \mm_R(\Phi)$ need not necessarily be empty).

\begin{definition}[Varifolds of optimal regularity]
For an open set $U \subset M$ we say that a varifold $V$ is in $\cR(U)$ if 
\[
V\res U = \sum_{k=i}^K \kappa_i \,|\Sigma_i|\,,
\]
for $\Sigma_i\subset U$ embedded minimal hypersurfaces that are regular away from  a closed singular set of Hausdorff dimension $n-7$, and $\kappa_i\in\NN$ integer multiplicities. We set $\cR = \cR(M)$. 

Abusing notation, we will say that a (singular) hypersurface $\Sigma$ is in $\cR(U)$ if the associated varifold satisfies $|\Sigma|\in\cR(U)$. 
\end{definition}

As we will discuss later, Almgren--Pitts theory \cite{Pitts} implies that given an optimal sweepout $\Psi$, there is $V\in\C(\Psi)$ with $V \in \cR$. 

\subsection{(Homotopic) one sided minimizers} 
The notions of one sided minimizers and homotopic minimizers will be useful when dealing with our deformations theorems, as we will see in Section \ref{ss:deformation_thm}. 

Let $U \subset M^{n+1}$ be an open set. Given two Caccioppoli sets $E\subset \Omega$ with $E \Delta \Omega \subset U$ and $\eps \geq 0$, we denote the inner families of deformations between $E$ and $\Omega$ in $U$ which do not increase the volume more than $\eps$ by
\[
\cI_\eps(\Omega, E\,|\,U):=\left\{ \{\Omega(t)\}_{t \in [0,1]} :  \begin{gathered} \, \Omega(0) = E,\, \Omega(1) = \Omega, (\Omega(t)\Delta \Omega) \setminus U =\emptyset,\\ \Omega(t_1) \subset \Omega(t_2) \text{ for } t_1 < t_2, \\
\Per(\Omega_t)  \leq \Per(\Omega) + \eps\ \end{gathered}   \right\}\,, 
\]
and analogously for $\Omega\subset E$ with $E \Delta \Omega \subset U$, we define the outer families of deformations between $\Omega$ and $E$ in $U$ which do not increase the volume more than $\eps$ by
\[
\cO_\eps(\Omega, E\,|\, U):=\left\{\{\Omega(t)\}_{t \in [0,1]}:\begin{gathered} \Omega(0) =\Omega,\, E = \Omega(1), (\Omega(t)\Delta \Omega) \setminus U =\emptyset,\\ \Omega(t_1) \subset \Omega(t_2) \text{ for } t_1 < t_2  \\ \Per(\Omega_t) \leq \Per(\Omega) + \eps\end{gathered} \right\}\,. 
\]
Moreover, given $\Omega$ and $\eps\geq 0$, we denote the collections of inner and outer Caccioppoli sets that can be reached by an inner or outer family of deformations by
\begin{gather*}
    \cI_\eps(\Omega\,|\, U)=\{E\subset \Omega\,: E \Delta \Omega \subset U,\, \cI_\eps(\Omega, E\,|\,U)\neq \emptyset\},\\
    \cO_\eps(\Omega\,|\, U)=\{E\supset \Omega\,: E \Delta \Omega \subset U, \, \cO_\eps(\Omega, E\,|\,U)\neq \emptyset\}\,.
\end{gather*}
In both definitions, if we do not include an $\eps$-subscript, it should be understood that we are taking $\eps=0$; this will happen most of the time below, but we will crucially rely on the definition with $\eps>0$ to obtain \emph{regular} homotopic minimizers in certain situations.

\begin{definition}[Homotopic inner and outer minimizers] \label{homotopy minimizer}
	Given a Caccioppoli set $\Omega$ we say that a Caccioppoli set $L(\Omega\,|\, U)\in \cI(\Omega\,|\, U)$ is a \emph{homotopic inner minimizer for $\Omega$ in $U$}, if
	\begin{enumerate}
		\item $\Per(L(\Omega\,|\, U)\,|\, U)\leq \Per(\Omega'\,|\,U)$, for every $\Omega'\in \cI(\Omega\,|\, U)$ and
        \item if $E\in \cI(\Omega\,|\, U)$ satisfies (1) and $ L(\Omega\,|\, U) \subset E$ then $E= L(\Omega\,|\, U)$.
	\end{enumerate}
	Similarly, define $R(\Omega\,|\, U)\in \cO(\Omega\,|\, U)$ to be a \emph{homotopic outer minimizer for $\Omega$ in $U$}, if
	\begin{enumerate}
		\item $\Per( R(\Omega\,|\, U)\,|\,U)\leq \Per(\Omega'\,|\,U)$, for every $\Omega'\in \cI(\Omega\,|\, U)$;
\item if $E\in \cI(\Omega\,|\, U)$ satisfies (1) and $E \subset R(\Omega\,|\, U)$ then $E= R(\Omega\,|\, U)$.
	\end{enumerate}
	We say that a Caccioppoli set $\Omega$ is an \emph{inner (resp. outer) homotopic minimizer in $U$} if $\Omega$ is a homotopic inner (resp.\ outer) minimizer relative to itself. 
\end{definition}

It is easy to see that inner and outer homotopic minimizers for a fixed set $\Omega$ always exist.

\begin{lemma}[Existence of homotopic minimizers] \label{l:existence_homotopic}
	For any Caccioppoli set $\Omega$ and open set $U$	we can find a homotopic inner (resp.\ outer) minimizer $L(\Omega\,|\, U)$ (resp.\ $R(\Omega\,|\, U)$) for $\Omega$ in $U$. 
\end{lemma}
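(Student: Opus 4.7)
The strategy is to combine the direct method in the calculus of variations with Zorn's lemma. I will describe the argument for $L(\Omega\,|\, U)$; the construction of $R(\Omega\,|\, U)$ is entirely analogous, with $\cO$ replacing $\cI$ and intersections replacing unions throughout. The plan has two steps: first produce an element of $\cI(\Omega\,|\, U)$ achieving the infimum of perimeter in $U$ (property (1)), then upgrade to a maximal such minimizer (property (2)).

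For the first step, set
\[
m_I := \inf\{\Per(E\,|\, U) : E \in \cI(\Omega\,|\, U)\},
\]
which is finite since $\Omega \in \cI(\Omega\,|\, U)$ trivially. Choose a minimizing sequence $E_i \in \cI(\Omega\,|\, U)$ together with associated nested inner homotopies $\{E_i(t)\}_{t \in [0,1]}$ satisfying $(E_i(t)\Delta\Omega)\setminus U = \emptyset$ and $\Per(E_i(t)) \leq \Per(\Omega)$ for every $t$. After reparametrizing each family to be affine in swept volume, i.e., so that $\Vol(E_i(t)) = (1-t)\Vol(E_i) + t\Vol(\Omega)$, each $E_i(\cdot)$ becomes Lipschitz in the flat topology with constant at most $\Vol(\Omega)$, by exactly the same estimate as in the proof of Lemma~\ref{nested sequence}. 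Arzel\`a--Ascoli then extracts a subsequential uniform limit $\{E^*(t)\}$ that is nested, and lower semicontinuity of mass under flat convergence gives $\Per(E^*(t)) \leq \Per(\Omega)$ for every $t$. Setting $E^* := E^*(0)$, one has $E^* \subset \Omega$, $(E^*\Delta\Omega)\setminus U = \emptyset$, the family $\{E^*(t)\}$ witnesses $E^* \in \cI(\Omega\,|\, U)$, and $\Per(E^*\,|\, U) \leq \liminf_i \Per(E_i\,|\, U) = m_I$, so $E^*$ is a minimizer.

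For the second step, apply Zorn's lemma to the partially ordered set
\[
\cM := \{E \in \cI(\Omega\,|\, U) : \Per(E\,|\, U) = m_I\}
\]
under set inclusion; $\cM \neq \emptyset$ by Step~1. Given an ascending chain $\{F_\beta\} \subset \cM$, monotonicity of volumes together with separability of $L^1(M)$ yields a countable cofinal subchain $F_1 \subset F_2 \subset \cdots \subset \Omega$ whose union $F^\infty := \bigcup_i F_i$ agrees with $\bigcup_\beta F_\beta$ up to a null set. Dominated convergence gives $\chi_{F_i} \to \chi_{F^\infty}$ in $L^1$, and lower semicontinuity gives $\Per(F^\infty\,|\, U) \leq m_I$. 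Re-running the volume-reparametrization and Arzel\`a--Ascoli argument of Step~1 on the homotopies $\{F_i(t)\}$ produces a nested homotopy from $F^\infty$ to $\Omega$ with perimeter bounded by $\Per(\Omega)$, so $F^\infty \in \cM$ and provides an upper bound for the chain. Any maximal element furnished by Zorn's lemma is the desired $L(\Omega\,|\, U)$.

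The main technical obstacle is verifying that the limiting sets $E^*$ (Step~1) and $F^\infty$ (Step~2) still admit the required nested, perimeter-non-increasing homotopies into $\Omega$. This is precisely the point where the volume-parametrization idea developed for Lemma~\ref{nested sequence} and Theorem~\ref{nested exists} is essential: it upgrades the abstract family of homotopies into a collection of uniformly Lipschitz paths in the flat topology, to which Arzel\`a--Ascoli applies directly.
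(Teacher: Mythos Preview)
Your proof is correct and Step~1 matches the paper's argument essentially verbatim: minimizing sequence, volume-reparametrize the homotopies to get uniform Lipschitz control in the flat topology, Arzel\`a--Ascoli, lower semicontinuity.

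Step~2 is where you diverge. The paper avoids Zorn's lemma entirely: among all elements of your set $\cM$ it simply \emph{minimizes the flat distance to $\partial\Omega$} (equivalently, maximizes volume, since for $E\subset\Omega$ one has $\cF(\partial E,\partial\Omega)=\Vol(\Omega\setminus E)$). A minimizing sequence in $\cM$ is handled by the same Arzel\`a--Ascoli argument on the homotopies, yielding a limit $L\in\cM$ that realizes the infimal flat distance; then if $E\in\cM$ with $L\subset E$ one has $\Vol(\Omega\setminus E)\leq\Vol(\Omega\setminus L)$, forcing $E=L$. Your Zorn argument is valid---the key point, that any chain in $\cM$ has an upper bound in $\cM$, reduces (via the volume-monotonicity observation you make) to exactly the same Arzel\`a--Ascoli step---but it packages a second direct variational problem inside abstract order-theoretic machinery. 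The paper's route is shorter and more concrete; yours has the mild advantage of making explicit that what is really being used is closure of $\cM$ under increasing limits.
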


\begin{proof} We consider only the case of inner minimizers as the outer minimizers are handled identically. 

This is once again an application of Arzel\`a--Ascoli theorem. Indeed, notice that $\cI(\Omega\,|\, U) \not = \emptyset$, since $\Omega \in \cI(\Omega\,|\, U)$, so we can consider a minimizing sequence $(E_j)_j$, that is 
\[
	\lim_{j}\Per( E_j\,|\,U)=\inf\{ \Per(E\,|\,U)\,:\, E\in  \cI(\Omega\,|\, U)
	\}
\]
	and let $\{E_j(x): x\in [0,1]\} \in \cI(\Omega,E_j \,|\, U)$ be the corresponding inner volume non increasing sweepout between $E_j$ and $\Omega$. We can assume that it is volume parametrized (being nested). Moreover $\Per(E_j(x)\,|\,U)$ is uniformly bounded by $\Per(\Omega\,|\,U)$, so by Arzel\`a--Ascoli there is a subsequence converging to $\{E_\infty(x)\} \in \cI(\Omega, E_\infty\,|\, U)$, with $E_\infty$ satisfying the desired minimality property by lower semi-continuity of the perimeter. 
	
	Finally, again by Arzel\`a--Ascoli, we can find $L(\Omega\,|\, U)\subset \Omega$ in the set of minimizers, which infimizes the flat distance to $\partial \Omega$, and so satisfies condition (2) (otherwise there would be a competitor closer to $\Omega$ in flat norm).
\end{proof}

We recall the definition of one-sided minimizers, which will be useful in the sequel when we perform cut and paste arguments. 

\begin{definition}[One sided minimizers]
	Let $E$ be a Caccioppoli set. We say that $E$ is \emph{locally one-sided inner (resp. outer) area-minimizing} in $U$ if for every $A\Subset U$ and $V$ with $V\Delta E \subset A$, we have
\[
	\Per( E\,|\, A) \leq \Per (V \,|\, A)
\]
	whenever $V\subset E$ (resp.\ $E\subset V$). We say that $E$ is \emph{strictly locally one-sided inner (resp.\ outer) area-minimizing} if the inequality holds strictly except when $E=V$ as Caccioppoli sets. 
\end{definition}

We show that homotopic minimizers are in fact strict one sided minimizers into the region they sweep out. 

\begin{lemma}[Homotopic minimizers are one sided minimizers in the swept out region] \label{strict minimizer}
	Suppose $L(\Omega\,|\, U)$ is an homotopic inner (resp. outer) minimizer for $\Omega$ in $U$. Then $L(\Omega\,|\, U)$ (resp. $R(\Omega\,|\, U)$) is strict locally outer (resp. inner) one-sided minimizing in $U\cap \Omega$ (resp. $U\setminus \Omega$) .
\end{lemma}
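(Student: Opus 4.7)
The plan is to prove the inner case (the outer case is symmetric by passing to complements). Write $L := L(\Omega\,|\,U)$ and let $\{\Omega(t)\}_{t \in [0,1]} \in \cI(\Omega, L\,|\,U)$ be the nested inner family from $L$ to $\Omega$ in $U$ witnessing $L \in \cI(\Omega\,|\,U)$, so $L \subset \Omega(t) \subset \Omega$, $\Omega \setminus \Omega(t) \subset U$, and $\Per(\Omega(t)) \leq \Per(\Omega)$ for all $t$. Fix $A \Subset U \cap \Omega$; the target is $\Per(L\,|\,A) \leq \Per(W\,|\,A)$ for every Caccioppoli set $W$ with $L \subset W$ and $W \Delta L \subset A$, with equality only when $W = L$.

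Rather than comparing $L$ to each $W$ directly, I first extract an outer minimizer by the direct method: the class $\{W : L \subset W,\, W \Delta L \subset A\}$ is non-empty (it contains $L$) and closed under $L^1$-convergence, so lower semi-continuity of the perimeter produces a minimizer $L^*$ of $\Per(\cdot\,|\,A)$ within this class, with $L \subset L^* \subset L \cup A \subset \Omega$. The non-strict inequality reduces to $L^* = L$, while strictness is obtained by applying the same argument to any hypothetical competitor $W \neq L$ realizing the minimum (viewed as an alternative choice of $L^*$).

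The key step is verifying $L^* \in \cI(\Omega\,|\,U)$, via the family
\[
\Omega^*(t) := \Omega(t) \cup L^*, \qquad t \in [0,1].
\]
One checks immediately that $\Omega^*(0) = L^*$, $\Omega^*(1) = \Omega$, the family is nested in $t$, and $\Omega \setminus \Omega^*(t) \subset \Omega \setminus \Omega(t) \subset U$. For the perimeter bound, submodularity of the perimeter yields
\[
\Per(\Omega(t) \cup L^*) + \Per(\Omega(t) \cap L^*) \leq \Per(\Omega(t)) + \Per(L^*).
\]
The decisive observation is that $\Omega(t) \cap L^*$ contains $L$ and agrees with $L$ outside of $A$, so it lies in the admissible class for the outer problem defining $L^*$; minimality of $L^*$ therefore gives $\Per(L^*\,|\,A) \leq \Per(\Omega(t) \cap L^*\,|\,A)$, and since the two sets coincide on $A^c$ this upgrades to $\Per(L^*) \leq \Per(\Omega(t) \cap L^*)$. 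Substituting back, $\Per(\Omega^*(t)) \leq \Per(\Omega(t)) \leq \Per(\Omega)$, so $L^* \in \cI(\Omega\,|\,U)$.

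To finish, I invoke the two defining properties of $L$: condition (1) forces $\Per(L) \leq \Per(L^*)$, and since $L$ itself is a competitor in the outer minimization, $\Per(L^*) \leq \Per(L)$. Thus $L^*$ attains the infimum in $\cI(\Omega\,|\,U)$ while containing $L$, and condition (2) yields $L^* = L$. The main obstacle is the perimeter bound for $\Omega^*(t)$; the delicate point is recognizing that $\Omega(t) \cap L^*$ automatically competes against $L^*$ in its own outer minimization problem, which closes the submodular estimate without circularity and lets the global homotopic minimality of $L$ take over.
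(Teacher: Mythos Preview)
Your proof is correct and follows essentially the same route as the paper's: both arguments pass to an outer minimizer $L^*$ (the paper's $V$) in the class $\{W : L \subset W,\ W\Delta L \subset A\}$, push the original homotopy $\Omega(t)$ to $\Omega(t)\cup L^*$, use the minimality of $L^*$ to control the perimeter of the new family, and then invoke conditions (1) and (2) to force $L^*=L$. The only real difference is that you make the submodularity step $\Per(\Omega(t)\cup L^*)+\Per(\Omega(t)\cap L^*)\leq \Per(\Omega(t))+\Per(L^*)$ explicit and explain why $\Omega(t)\cap L^*$ competes against $L^*$, whereas the paper compresses this into the single line ``$\Per(\Omega_t\,|\,A)\leq \Per(U_t\,|\,A)$''; your version is clearer on this point.
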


\begin{proof}
	We consider homotopic inner minimizers; the case of outer minimizers is similar. 
	
	If $L(\Omega\,|\, U)$ is not a strict outer minimizer in $U\cap \Omega$ then there is $V'$ with $L(\Omega\,|\, U)\subset V'$ and $L(\Omega\,|\, U) \Delta V' \subset A \Subset U$ and
	\[
	\Per(V'\,|\,A) \leq P(L(\Omega\,|\, U)\,|\,A).
	\]
	We can minimize perimeter in $A$ among all such $V'$ to find $V$. Namely,
	\begin{equation}\label{e:confusing1}
	\Per( V \,|\,A )\leq \Per(W\,|\,A)  
	\end{equation} 
		for all $W$ with $W\Delta V\subset A\setminus L(\Omega\,|\, U)$. Since $L(\Omega\,|\, U) \in \cI(\Omega\,|\, U)$, there is $\{U(x)\,:\,x \in [0,1]\} \in \cI(\Omega, L(\Omega\,|\,U)\,|\,U)$. Set $\Omega(x) = U(x) \cup V$. Since $V$ satisfies \eqref{e:confusing1},
we have that 
	$$
	\Per(\Omega_t\,|\,A) \leq \Per(U_t\,|\,A).
	$$
	This implies that $\Omega(1)=V$ satisfies (1) of Definition \ref{homotopy minimizer} and $V \Delta L(\Omega\,|\,U)\subset A \setminus L(\Omega\,|\, U)$, therefore by (2) of Definition \ref{homotopy minimizer}, it follows that $V= L(\Omega\,|\,U)$. This completes the proof. 
\end{proof}

We have the following lemma that will allow us to find bounded mass homotopies in certain situations. 
\begin{lemma}[Interpolation lemma] \label{l:close in flat}
	Fix $L>0$. For every $\eps>0$ there exists $\delta>0$, such that the following holds. If $\Om_0,\Om_1 $ are two sets of finite perimeter, such that $\Om_0 \subset \Om_1$, $\Per(\Om_i) \leq L$, $i=0,1$, and $\Vol(\Om_1 \setminus \Om_0)\leq \delta$, then there exists a nested $\cF$-continuous family $\{\partial\Om_t \}_{t \in [0,1]}$ with 
\[
\Per(\Om_t)\leq \max\{\Per(\Om_0),\Per(\Om_1) \}+\eps
\]
for all $t\in[0,1]$
\end{lemma}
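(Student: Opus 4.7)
\medskip

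\noindent\emph{Proof plan.} My plan is a two-step construction: first reduce to smooth nested sets, then foliate the difference region by level sets of a carefully engineered function. For the reduction, I would mollify $\chi_{\Omega_i}$ and apply Sard's theorem to find smooth open nested sets $\tilde\Omega_0 \subset \Omega_0 \subset \Omega_1 \subset \tilde\Omega_1$ with $|\Per(\tilde\Omega_i) - \Per(\Omega_i)| \leq \varepsilon/4$ and volume differences arbitrarily small. The interpolation between each $\Omega_i$ and $\tilde\Omega_i$ can be achieved in a thin shell near $\partial \Omega_i$ via a nested convolution-based sweepout, costing at most $\varepsilon/4$ in perimeter. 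This reduces the problem to smooth nested $\tilde\Omega_0 \subset \tilde\Omega_1$ with $\Vol(\tilde\Omega_1 \setminus \tilde\Omega_0) \leq \delta'$ small, provided $\delta$ is chosen appropriately.

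For the smooth case, I would choose a smooth function $u\colon M \to [0,1]$ satisfying $u \equiv 0$ on $\tilde\Omega_0$, $u \equiv 1$ off $\tilde\Omega_1$, with all interior level values regular (by Sard). Then $\Omega(s) = \{u \leq s\}$ is a nested $\cF$-continuous family interpolating from $\tilde\Omega_0$ to $\tilde\Omega_1$. Its perimeter decomposes into contributions from parts of $\partial \tilde\Omega_0$ (those not yet included in the interior of $\Omega(s)$), parts of $\partial \tilde\Omega_1$ (those already reached by $\Omega(s)$), and the interior level set $\{u = s\} \cap (\tilde\Omega_1 \setminus \tilde\Omega_0)$. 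A careful design of $u$, so that its gradient ``transfers'' boundary from $\partial \tilde\Omega_0$ to $\partial \tilde\Omega_1$ consistently as $s$ increases, ensures the two boundary contributions sum to at most $\max\{\Per(\tilde\Omega_0), \Per(\tilde\Omega_1)\}$.

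The main obstacle is bounding the interior level-set area $\mathcal{H}^n(\{u = s\} \cap (\tilde\Omega_1 \setminus \tilde\Omega_0))$ uniformly in $s$; the coarea formula only controls the average $\int_0^1 \mathcal{H}^n(\{u=s\})\,ds$. To address this, I would adapt $u$ to a decomposition of $\tilde\Omega_1 \setminus \tilde\Omega_0$ into connected components, using a signed distance function to $\partial\tilde\Omega_0$ on components abutting it and radial coordinates from a basepoint on components bounded away from $\partial \tilde\Omega_0$. Because each component has volume at most $\delta'$, a suitable choice of $u$ (for example, coordinate height functions within small covering charts whose radii are controlled by $\delta'$) gives a level-set area that is $O({\delta'}^{n/(n+1)})$ per component; with the effective number of contributing components controlled via $\Per(\tilde\Omega_1) \leq L$, the pointwise total interior level-set area stays below $\varepsilon/2$ provided $\delta = \delta(L,\varepsilon)$ is chosen small enough. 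Combining the three contributions yields the desired bound $\Per(\Omega(t)) \leq \max\{\Per(\Omega_0), \Per(\Omega_1)\} + \varepsilon$.
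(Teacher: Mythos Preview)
Your proposal has a genuine gap at exactly the point you flag as ``the main obstacle'': the uniform-in-$s$ bound on $\mathcal{H}^n(\{u=s\}\cap(\tilde\Omega_1\setminus\tilde\Omega_0))$. The coarea formula only gives an average bound, and your sketch for upgrading this to a pointwise bound is not an argument. The claim that a single smooth $u$ built from signed-distance or radial functions on components yields level-set area $O(\delta'^{n/(n+1)})$ for \emph{every} regular value $s$ is a nontrivial theorem in its own right (essentially a ``width'' or waist-type inequality), not something that falls out of the construction. Moreover, the number of connected components of $\tilde\Omega_1\setminus\tilde\Omega_0$ is not controlled by $\Per(\tilde\Omega_1)\le L$; you can have arbitrarily many small components with bounded total perimeter. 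Finally, your ``boundary transfer'' claim---that the contributions from $\partial\tilde\Omega_0$ and $\partial\tilde\Omega_1$ in $\partial\{u\le s\}$ sum to at most $\max\{\Per(\tilde\Omega_0),\Per(\tilde\Omega_1)\}$---does not make sense for a smooth $u$: for a regular value $s\in(0,1)$ the boundary $\partial\{u\le s\}=\{u=s\}$ lies entirely in the interior of $\overline{\tilde\Omega_1\setminus\tilde\Omega_0}$ and contains no piece of either $\partial\tilde\Omega_i$. (Your smoothing reduction also has the nesting stated backwards, and interpolating between $\Omega_i$ and $\tilde\Omega_i$ is itself an instance of the lemma you are trying to prove.)

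The paper's argument bypasses all of this. It first inserts a perimeter \emph{minimizer} $\Omega$ among sets with $\Omega_0\subset\Omega'\subset\Omega_1$; minimality is then used as a cut-and-paste tool so that each time a small region is filled in, the perimeter of the new intermediate set stays below $\Per(\Omega_0)+O(\delta)$. The filling is done ball-by-ball over a fixed finite cover $\{B(x_i,r)\}$ of $M$: coarea picks a good radius $r_i$ so $\partial B(x_i,r_i)$ meets $\Omega\setminus\Omega_0$ in small mass, and inside each ball one invokes Falconer's theorem (a region of volume $\le\delta$ admits a sweepout by hypersurfaces of area $\le c(n)\delta^{n/(n+1)}$) together with the monotonization lemma from \cite{CL} to get a \emph{nested} family sweeping it out. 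The pointwise level-set bound you need is thus outsourced to Falconer, and the accumulation over many pieces is controlled not by counting components but by the fixed covering number $N$ and the minimality of $\Omega$. If you want to repair your approach, the missing ingredient is precisely this Falconer-type waist inequality plus a mechanism (like the intermediate minimizer) that prevents perimeter from accumulating as you glue pieces together.
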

\begin{proof}
Let $\Om$ be a Caccioppoli set that minimizes
perimeter among sets $\Om'$
with $\Om_0 \subset \Om' \subset \Om_1$.

Fix $r>0$ such that for every $x \in M$ the ball $B(x,2r)$
is $2$-bi-Lipschitz diffeomorphic to
the Euclidean ball of radius $2r$.
Let $\{B(x_i,r) \}_{i=1}^N$ be a collection of balls covering $M$. By coarea inequality
we can find a radius $r_i \in [r,2r]$,
so that $\bM(\partial B(x_i,r_i) \cap \Om \setminus \Om_0) \leq \frac{\delta}{r}$.

Let $U_1 = B(x_1,r_1) \cap
\Om \setminus \Om_0$.
By a result of Falconer (see \cite{Falconer},
\cite[Appendix 6]{Guth}) there exists a family of 
hypersurfaces sweeping out $U_1$ of area bounded by
$c(n) \delta^{\frac{n}{n+1}}$. It follows (see
\cite[Lemma 5.3]{CL}) that
there exists a nested family 
$\{ \Xi^1(t)\} $ of Caccioppoli sets with
$\Xi^1(0) = \Om_0$ and $\Xi^1(1) = \Om_0 \cup U_1$
and satisfying
$$\Per(\Xi^1(t)) \leq \Per(\Om_0)+2c(n)\delta^{\frac{n}{n+1}} \, . $$
Let $\Om^1 =  \Om_0 \cup U_1$. Observe, that
the minimality of $\Om$ implies that
$$\Per(\Om^1) \leq \Per(\Om_0) + \frac{2\delta}{r} $$

Inductively, we define $\Om^k= \Om^{k-1} \cup U_k$ and
$U_k = B(x_k,r_k) \cap
\Om \setminus \Om^{k-1}$. As above we can construct a nested homotopy of Caccioppoli sets $\Xi^k(t)$ from $\Om^{k-1}$ to $\Om^k$, satisfying
$$\Per(\Xi^k(t)) \leq \Per(\Om_0)+2c(n)\delta^{\frac{n}{n+1}} 
+ \frac{2N\delta}{r}$$

We choose $\delta>0$ so small that $\Per(\Xi^k(t))<  \Per(\Om_0) + \eps$.
It follows then that we have obtained a homotopy from $\Om_0$ to $\Om$ satisfying the
desired perimeter bound. Similarly,
we construct a homotopy from $\Om$ to $\Om_1$.
\end{proof}




Finally, we have the following result. Recall that White \cite{W} proves that strictly stable  \emph{smooth} minimal hypersurfaces are locally area-minimizing. A generalization of such a result to the case of hypersurfaces with singularities (i.e., elements of $\cR$) would be very interesting. The following (weaker) result will suffice for our needs; it can be seen as a result along these lines, except ``stability'' is replaced by a stronger hypothesis: the surface is homotopic minimizing to one side.\footnote{Note that one certainly needs a condition on the singularities rather than just a condition on the regular part like strict stability, since as we show in Proposition \ref{p:def_thm}, the existence of (regular) non-minimizing tangent cones implies that the hypersurface is not homotopic minimizing irrespective of any stability condition that might hold on the regular part.}

 \begin{proposition}[Comparing the notions of minimizing vs.\ homotopic minimizing for minimal surfaces]\label{prop:min-vs-htpy-min}
Suppose that $\Omega$ is a Caccioppoli set and for some strictly convex open set $U \subset M$ with smooth boundary, the associated varifold $V = |\partial \Omega|$ satisfies $V \in \cR(U)$. Assume that $\supp V \cap U$ is connected. 

Suppose that $\Omega$ is inner (resp.\ outer) homotopy minimizing in $U$. Then, at least one of the following two situations holds:
\begin{enumerate}
\item for all $p \in \supp V \cap U$, there is $\rho_0>0$ so that for $\rho<\rho_0$, $B_\rho(p) \subset U$ and $\Omega$ is inner (resp.\ outer) minimizing in $B_\rho(p)$, or
\item there exists a sequence of Caccioppoli sets $E_i
\neq \Omega$
with $|\partial E_i| \in \cR(U)$ so that $E \Delta \Omega \subset \Omega \cap U$ (resp.\ $\Omega^c \cap U$), $|\partial E_i|$ has stable regular part, and $\partial E_i \to \partial \Omega$ in the flat norm.  
\end{enumerate}
\end{proposition}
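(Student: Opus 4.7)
By symmetry, it suffices to treat the inner case. I argue by contrapositive: assuming (1) fails, I produce a sequence verifying (2).

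First I observe that the failure of (1) must occur at a singular point of $\partial\Omega$. At any regular point $q\in\reg(\partial\Omega)\cap U$, $\partial\Omega$ is a smooth embedded minimal hypersurface that is strictly stable in sufficiently small balls (the principal Jacobi eigenvalue diverges as the ball shrinks), hence locally area-minimizing in both directions by White's theorem \cite{W}; in particular $\Omega$ is inner minimizing in some $B_\rho(q)\Subset U$. So the failure of (1) produces a point $p\in\sing(\partial\Omega)\cap U$ at which $\Omega$ is not inner minimizing in $B_\rho(p)$ for arbitrarily small $\rho>0$.

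For each such $\rho$ with $B_\rho(p)\Subset U$, the direct method (lower semicontinuity of perimeter, $L^1$-compactness of Caccioppoli sets of uniformly bounded perimeter) produces a minimizer $E_\rho$ of
\[
\inf\bigl\{\Per(E\,|\,U)\,:\, E\subset\Omega,\ E\Delta\Omega\subset B_\rho(p)\bigr\},
\]
and the failure of (1) at $p$ forces $E_\rho\neq\Omega$. By standard regularity for one-sided obstacle problems (De Giorgi's $\varepsilon$-regularity and Federer's dimension reduction), $\partial^*E_\rho\cap B_\rho(p)$ is a $C^{1,\alpha}$-embedded hypersurface off a closed singular set of Hausdorff dimension at most $n-7$. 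The strong maximum principle, combined with connectedness of $\supp V\cap U$, forces $\partial E_\rho$ to be strictly separated from $\partial\Omega$ in the interior of $B_\rho(p)$; hence the constraint $E\subset\Omega$ is not binding in the interior and the regular part of $\partial E_\rho$ is a classical minimal hypersurface, locally area-minimizing, and thus stable there.

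The main technical obstacle is that $\partial E_\rho$ may join $\partial\Omega$ along an $(n-1)$-dimensional corner curve at $\partial B_\rho(p)$, where the interior free minimal piece and the fixed exterior piece $\partial\Omega$ meet with distinct tangent planes, so that $|\partial E_\rho|$ need not lie in $\cR(U)$. To fix this, I perform a surgery in an annular region $B_{2\rho}(p)\setminus\bar B_\rho(p)$: modify $E_\rho$ so that $\partial E_\rho$ matches $\partial\Omega$ smoothly across $\partial B_{2\rho}(p)$ (using smoothness of $\partial\Omega$ away from the lower-dimensional singular set, on generic annular radii), and re-minimize perimeter in the annulus subject to these smooth boundary conditions. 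The resulting $\tilde E_\rho$ satisfies $\tilde E_\rho\subset\Omega$, $\tilde E_\rho\Delta\Omega\subset B_{2\rho}(p)\cap\Omega$, $\tilde E_\rho\neq\Omega$, and $|\partial\tilde E_\rho|\in\cR(U)$ with stable regular part.

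Taking $\rho_i\to 0$, the sequence $E_i:=\tilde E_{\rho_i}$ verifies (2): $E_i\neq\Omega$; $|\partial E_i|\in\cR(U)$ with stable regular part; $E_i\Delta\Omega\subset\Omega\cap U$; and $\cF(\partial E_i,\partial\Omega)=\Vol(\Omega\setminus E_i)\le \Vol(B_{2\rho_i}(p))\to 0$. The main difficulty is the corner-smoothing surgery of the previous paragraph; I expect it to require $\varepsilon$-regularity for free-boundary minimal hypersurfaces together with Simon's uniqueness theorem \cite{Simon} for tangent cones at isolated singularities to control the local geometry of $\partial E_\rho$ near $\partial B_\rho(p)$.
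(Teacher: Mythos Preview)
Your approach has two genuine gaps, and both stem from the decision to localize the minimization to a small ball $B_\rho(p)$.

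\textbf{The corner surgery does not close.} You correctly identify that $\partial E_\rho$ will have an $(n-1)$-dimensional corner along $\partial B_\rho(p)$, but the proposed fix---re-minimizing in an annulus $B_{2\rho}\setminus \bar B_\rho$ with boundary data $\partial E_\rho\cap\partial B_\rho$ on the inside and $\partial\Omega\cap\partial B_{2\rho}$ on the outside---simply relocates the corner. The annular minimizer has no reason to meet $\partial E_\rho$ tangentially across $\partial B_\rho$, nor $\partial\Omega$ tangentially across $\partial B_{2\rho}$; you have traded one corner for two. Your own hedging (``I expect it to require \ldots'') is an acknowledgment that this step is not actually carried out.

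\textbf{Stability of the regular part fails.} Even granting the surgery, your $\tilde E_\rho$ agrees with $\Omega$ outside $B_{2\rho}(p)$, so $\partial\tilde E_\rho\cap U$ contains $\partial\Omega\cap(U\setminus B_{2\rho}(p))$. Conclusion (2) requires the \emph{entire} regular part of $\partial E_i$ in $U$ to be stable, but nothing in the hypotheses forces $\reg(\partial\Omega)\cap U$ to be stable. Note also that you never invoke the inner homotopy minimizing hypothesis; this should be a warning sign.

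The paper avoids both problems by minimizing \emph{globally in $U$} over the relaxed class $\cI_\eps(\Omega\,|\,U)$ (sets reachable from $\Omega$ by nested homotopies that raise perimeter by at most $\eps$). The resulting $E^\eps$ has no artificial boundary inside $U$, hence no corner; and by Ilmanen's singular maximum principle, if $E^\eps\neq\Omega$ then $\partial^*E^\eps$ is \emph{entirely disjoint} from $\supp V$ in $U$, so it is a free stable minimal hypersurface throughout $U$. The $\eps$-relaxation is essential: with $\eps=0$ the homotopy minimizing hypothesis would force $E^0=\Omega$. Sending $\eps\to 0$ and analyzing the limit (this is where the homotopy minimizing hypothesis is finally used) yields the dichotomy.
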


\begin{remark}
It is interesting to ask if the second possibility occurs; it seems possible that one could rule this out in the case where $V$ has regular tangent cones that are all strictly minimizing in the sense of Hardt--Simon \cite[\S 3]{HS}. 
\end{remark}

\begin{proof}[Proof of Proposition \ref{prop:min-vs-htpy-min}]
We consider the ``inner'' case, as the ``outer'' case is similar. Let $E^\eps \in \cI_\eps(\Omega\, |\, U)$ minimize perimeter among all sets in $\cI_\eps(\Omega\,|\, U)$ (as usual, the existence of $E^\eps$ follows from Arzel\`a--Ascoli). We claim that $E^\eps$ is area-minimizing to the inside of $\Omega$ in sufficiently small balls. 

More precisely, for $r>0$ sufficiently small, suppose there was a Caccioppoli set $E'$ so that $E'\Delta E^\eps \subset B_r(p) \cap U \cap \Omega$ and $\Per(E'\,|\,U) < \Per(E^\eps\,|\,U)$. As long as $r$ was chosen sufficiently small, Lemma \ref{l:close in flat} guarantees
that $E' \in \cI_\eps(\Omega\, |\,U)$. This is a contradiction. 

Now, consider $p \in \reg V \cap U$. We note that $E^\eps$ is almost minimizing (with no constraint coming from $\Omega$) in the sense of \cite{Tam}, and thus has $C^{1,\alpha}$ boundary in $B_r(p) \cap U$,  thanks to standard results on the obstacle problem; see \cite[\S 1.9, \S1.14(iv)]{Tam}. As such, away from $\sing V$ (which has Hausdorff dimension at most $n-7$) we can thus conclude that $\partial^*E^\eps$ is regular, stationary and stable.\footnote{Cf.\ the proof of \cite[Proposition 2.1]{Liu} for the proof of stability.} A capacity argument then implies that $|\partial E^\eps| \in \cR(U)$ and $\partial^*E^\eps$ is stable. Therefore, the maximum principle for (possibly singular) hypersurfaces \cite{Ilm} implies that either $E^\eps = \Omega$ or $\partial^*E^\eps \cap \supp V = \emptyset$. In the first case, we can conclude that $\Omega$ is inner minimizing in small balls (since $E^\eps$ is). 

We can thus assume that the latter possibility holds for all $\eps>0$ sufficiently small. Taking $\eps_j\to 0$, there is $E \in \cI_0(\Omega | U)$ so that $E^{\eps_j}\to E$ with respect to the flat norm. If $E=\Omega$, then the second possibility in the conclusion of the proposition holds for $E_j = E^{\eps_j}$. 

The final case to consider is $E\neq \Omega$. By curvature estimates for stable minimal hypersurfaces \cite{SS}, $|\partial E| \in \cR(\Omega)$ and thus $\partial^*E \cap \supp V = \emptyset$ again by the maximum principle.

We know that $\Per(E_i| U) \leq \Per(\Omega| U)$, so in the limit we get
$\Per(E| U) \leq \Per(\Omega| U)$
By Arzel\`a-Ascoli nested homotopies from $\Omega$ to $E_i$ will converge to
a nested homotopy $E(t)$ from $\Omega$ to $E$ that does not increase volume.
By the inner homotopy minimizing property of $\Omega$ we have
\[
\Per(E\,|\,U) = \Per(\Omega\, |\, U).
\]

Suppose we minimize perimeter among Caccioppoli sets $A'$ sandwiched between $E$ and $\Omega$, $E \subset A' \subset \Omega$. We claim that the
minimizer $A$ has perimeter equal to that of $\Omega$. Indeed, if  $A$ has strictly
smaller perimeter, then family $E(t) \cup A$
is an area decreasing nested homotopy between $\Omega$ and $A$, contradicting that
$\Omega$ is inner homotopic minimizing.

We thus see that $\Omega$ is minimizing in $\Omega \cap E^c \cap U$, which implies that it is inner minimizing in small balls, as asserted. 
\end{proof}

\section{Non-excessive sweepouts}\label{sec:min-max}

In this section we introduce the concept of excessive intervals and excessive points for a sweepout and prove that there is a sweepout, such that every point in the critical domain is not excessive.

\begin{definition}[Excessive points and intervals] \label{def:excessive_interval}
	Suppose $\{\Phi(x)=\partial \Omega(x)\}$ is a sweepout. Given a connected interval $I$ (we allow $I$ to be open, closed, or half-open)
	we will say that $\{\Phi^I(x) = \partial \Om^I(x)\}_{x\in \bar I}$ is an \emph{$I$-replacement family for $\Phi$} if
	$\Om^I(a) = \Om(a)$, $\Om^I(b)=\Om(b)$ and for all $x \in I$, 
\[
\limsup_{I \ni y\to x} \bM(\Phi^I(y)) < W.
\]
We say that a connected interval $I$ is an \emph{excessive interval for $\Phi$} if there is an $I$-replacement family for  $\Phi$. We say that a point $x$ is \emph{left (resp.\ right) excessive for $\Phi$} if there is an excessive interval $I$ for $\Phi$ so that $(x-\eps,x]\subset I$ (resp.\ $[x,x+\eps)\subset I$) for some $\eps>0$. 
\end{definition}

 The goal of this section is to prove the following result.

\begin{theorem}[Existence of non-excessive min-max hypersurface]\label{c:non-excessive_minmax}
There exists a (ONVP) sweepout $\Psi$ such that every $x\in \mm_L(\Psi)$ is not left excessive and every $x \in \mm_R(\Psi)$ is not right excessive. 
\end{theorem}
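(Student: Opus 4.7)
The plan is to realize $\Psi$ as a minimizer of the average-mass functional
\[
J(\Psi) := \int_0^1 \bM(\Psi(x))\, dx
\]
over the class $\cN$ of (ONVP) sweepouts; the intuition is that any excessive critical point would permit a local ``dip'' strictly lowering $J$ while staying in $\cN$, contradicting minimality. Existence of a minimizer follows from standard compactness: $\cN \neq \emptyset$ by Theorem \ref{nested exists}, a minimizing sequence $\{\Psi_n\} \subset \cN$ is mass-bounded (by $W$) and $1$-Lipschitz in $\cF$ (by volume-parametrization), so Lemma \ref{nested sequence} yields a subsequential uniform $\cF$-limit $\Psi \in \cN$, and Fatou combined with $\cF$-lower semicontinuity of $\bM$ gives $J(\Psi) \leq \liminf_n J(\Psi_n) = \inf_\cN J$.

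For the non-excessive property I argue by contradiction. Suppose $x_0 \in \mm_L(\Psi)$ is left-excessive (the right case is symmetric) with excessive interval $I$ and replacement family $\Phi^I$. Using $(x_0 - \eps, x_0] \subset I$, I fix a closed subinterval $[a, b] \subset I$ containing $(x_0 - \eps/2, x_0]$; the pointwise bound $\limsup_{I \ni y \to x} \bM(\Phi^I(y)) < W$ is upper semicontinuous in $x$, so on the compact set $[a, b]$ it is bounded above by $W - \eta$ for some $\eta > 0$. After applying Chambers--Liokumovich \cite{CL} to $\Phi^I|_{[a, b]}$ with tolerance $\delta < \eta/4$ and reparametrizing by volume, I obtain a nested volume-parametrized family on $[a, b]$ with mass everywhere $\leq W - \eta/2$.

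Next I construct the competitor $\tilde\Psi \in \cN$ by gluing this nested family to $\Psi|_{[0,1] \setminus (a, b)}$, bridging the discrepancies $\Psi(a) \neq \Phi^I(a)$ and $\Psi(b) \neq \Phi^I(b)$ on two short auxiliary parameter intervals via the Interpolation Lemma \ref{l:close in flat}. The result is nested and volume-parametrized by construction, and $\sup \bM(\tilde\Psi) = W$ (inherited from $\Psi$ outside $[a, b]$), so $\tilde\Psi \in \cN$. Strict inequality $J(\tilde\Psi) < J(\Psi)$ follows because $\bM(\tilde\Psi) \leq W - \eta/2$ on $[a, b]$, while $x_0 \in \mm_L(\Psi)$ combined with $\cF$-lower semicontinuity of $\bM$ forces $\bM(\Psi) \geq W - \eta/4$ on a positive-measure subset of $[x_0 - \eps/2, x_0]$, delivering the contradiction.

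The main technical obstacle is the bridging step, particularly when $\bM(\Psi(a))$ itself is close to $W$. I expect to resolve this by exploiting the common volume-parametrization of $\Psi$ and (the nested version of) $\Phi^I$, so that $\Psi(a)$ and $\Phi^I(a)$ agree in volume and are $\cF$-close whenever $a$ is chosen near a matching endpoint of $\bar I$; then Lemma \ref{l:close in flat} on sufficiently short bridging intervals controls the interpolation mass near $\max(\bM(\Psi(a)), \bM(\Phi^I(a)))$. If this remains too large, one shifts $a, b$ slightly to nearby non-critical values of $\Psi$ before performing the surgery, which is possible because the set $\{x : \bM(\Psi(x)) = W\}$ cannot fill an entire subinterval of $I$ without already contradicting the existence of the excessive replacement $\Phi^I$.
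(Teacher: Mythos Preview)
Your variational idea is attractive, but the key step---deducing $J(\tilde\Psi) < J(\Psi)$---does not follow from what you have. Knowing that $\bM(\tilde\Psi) \leq W - \eta/2$ on $[a,b]$ and that $\bM(\Psi) > W - \eta/4$ on some open set $U \subset [a,b]$ of positive measure only tells you the integrand dropped on $U$; on $[a,b]\setminus U$ you have no useful lower bound for $\bM(\Psi)$, so $\bM(\Psi)$ there could be far below $W-\eta/2$ (the isoperimetric inequality gives only a crude lower bound depending on the enclosed volume). The Chambers--Liokumovich monotonization controls only the \emph{supremum} of mass, not its profile, so after applying it the replacement could sit near $W-\eta/2$ throughout $[a,b]$ while $\bM(\Psi)$ spikes toward $W$ only on a tiny set and is otherwise small. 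Then $\int_{[a,b]}\bM(\tilde\Psi)$ may well exceed $\int_{[a,b]}\bM(\Psi)$, and the contradiction evaporates. In short, minimizing $\int_0^1 \bM$ does not interact with excessiveness in the way you need; excessiveness is a statement about the \emph{sup} of mass, not its average.

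The bridging issue you flag is also genuine and not resolved by your suggestions: Lemma~\ref{l:close in flat} requires $\Omega_0 \subset \Omega_1$, and equality of volumes (or flat closeness) does not give this. Taking $a,b$ at the endpoints of $\bar I$ would make $\Phi^I$ and $\Psi$ agree there by definition, eliminating the bridge, but then you lose the uniform gap $\eta$ on all of $\bar I$ when $I$ is not closed; and \cite{CL} is stated for full sweepouts, so a relative version would need to be checked.

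The paper avoids all of this by never optimizing. It shows that any excessive interval for a given (ONVP) sweepout $\Phi$ is contained in a \emph{maximal} one (this is the nontrivial Proposition~\ref{p:max_int}), then simultaneously replaces $\Phi$ on every maximal excessive interval by a replacement family. The resulting $\Psi$ is non-excessive because any putative excessive interval $J$ for $\Psi$ can be promoted, via an extension lemma, to an excessive interval for $\Phi$ that overlaps one of the maximal ones but is not contained in it---contradicting maximality. So the mechanism is structural (maximality) rather than variational.
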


\subsection{Preliminary results} We establish several results that will be used in the proof of Theorem \ref{c:non-excessive_minmax}.

\begin{lemma}[Extension lemma I]\label{l:union-excessive}
If $I,J$ are excessive for $\Phi$ and $I \cap J \not = \emptyset$, then $I \cup J$ is excessive for $\Phi$. 
\end{lemma}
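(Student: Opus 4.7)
The plan is to construct an $(I\cup J)$-replacement family by splicing $\Phi^I$ and $\Phi^J$ at a carefully chosen switch-point inside the overlap $I\cap J$.

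First I will dispose of the trivial case: if $J\subseteq I$ or $I\subseteq J$ then $I\cup J$ equals the larger interval and is already excessive. Otherwise, after possibly swapping labels, I may arrange that $\inf I<\inf J$ and $\sup I<\sup J$, so $I$ lies to the left of $J$ and $\overline{I\cup J}=[\inf I,\sup J]$. I then fix any point $m\in I\cap J$ (which is nonempty by hypothesis), so that both $\Phi^I$ and $\Phi^J$ are defined at $m$ and, crucially, their limsup conditions are posited at $m$.

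Second I will define
\[
\Phi^{I\cup J}(x) := \begin{cases} \Phi^I(x), & x\in\overline{I\cup J},\ x\leq m,\\ \Phi^J(x), & x\in\overline{I\cup J},\ x>m. \end{cases}
\]
Since $\overline{I\cup J}\cap(-\infty,m]\subseteq\bar I$ and $\overline{I\cup J}\cap(m,\infty)\subseteq\bar J$, both branches are well-defined; at the outer endpoints $\inf I,\sup J$, the definition reduces to $\Phi$ via the endpoint-matching clauses for $\Phi^I,\Phi^J$, so the endpoint condition of Definition~\ref{def:excessive_interval} is immediate.

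Third I will check the limsup condition at each $x\in I\cup J$. For $x<m$, any sequence $y_n\to x$ in $I\cup J$ satisfies $y_n\leq m$ eventually, so $\Phi^{I\cup J}(y_n)=\Phi^I(y_n)$; since such an $x$ lies in $I$, the $I$-replacement property of $\Phi^I$ gives $\limsup \bM(\Phi^I(y_n))<W$. The case $x>m$ is symmetric using $\Phi^J$ together with $x\in J$. At $x=m$, I split the approaching sequence into its parts in $(-\infty,m]$ (controlled by $\Phi^I$'s condition at $m\in I$) and $(m,\infty)$ (controlled by $\Phi^J$'s condition at $m\in J$), obtaining $\limsup<W$ in each case and hence altogether.

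The subtle point, and the reason for insisting that the switch-point $m$ sit strictly inside $I\cap J$, is that a naive concatenation using $\Phi^I$ on $I$ and $\Phi^J$ on $J\setminus I$ can fail the limsup condition at an interior boundary point such as $\sup I$ when $I$ happens to be open there: $\Phi^I$'s defining limsup condition is only posited for $x\in I$, so it may approach $W$ at $\sup I$ without contradiction. Placing the transition at $m\in I\cap J$ bypasses this cleanly, because at $m$ both $\Phi^I$'s and $\Phi^J$'s limsup conditions are available, and the various open/closed configurations at the outer endpoints of $I$ and $J$ are then harmlessly absorbed into the endpoint-matching clauses.
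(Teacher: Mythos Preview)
Your argument has a genuine gap: it never addresses $\mathcal F$-continuity of the spliced family at the switch point $m$. A replacement family, though the definition is terse, is implicitly a flat-continuous family of boundaries of Caccioppoli sets (otherwise it could not be substituted back into $\Phi$ to produce a sweepout, as is done in the proof of Theorem~\ref{c:non-excessive_minmax}; indeed the paper's own proof of this lemma explicitly checks continuity). There is no reason whatsoever for $\Omega^I(m)$ to equal $\Omega^J(m)$: the two replacement families are produced independently and are only pinned to $\Omega$ at the endpoints of $I$ and $J$ respectively, not at interior points. Hence your concatenation is in general discontinuous at $m$, and the limsup condition alone does not rescue this.

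The paper handles exactly this difficulty. With $a,b$ the endpoints of $K=I\cap J$ and $c$ its midpoint, it chooses a Caccioppoli set $\tilde\Omega$ minimizing perimeter among all sets with $\Omega(a)\subset\tilde\Omega\subset\Omega(b)$. It then reparametrizes $\Omega^I$ onto $[a_1,c]$ and replaces it by $\Omega^I(\cdot)\cap\tilde\Omega$, and reparametrizes $\Omega^J$ onto $[c,b_2]$ replacing it by $\Omega^J(\cdot)\cup\tilde\Omega$. At $x=c$ the first branch becomes $\Omega(b_1)\cap\tilde\Omega=\tilde\Omega$ and the second becomes $\Omega(a_2)\cup\tilde\Omega=\tilde\Omega$, so continuity is forced. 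The minimizing property of $\tilde\Omega$ is what guarantees that intersecting or unioning with $\tilde\Omega$ never increases perimeter, so the mass bound $\bM<W$ survives the cut-and-paste. This minimizer trick is the missing idea in your attempt.
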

\begin{proof}
Let $\{\partial \Om^I(x) \}_{x \in I}$
and $\{\partial \Om^J(x) \}_{x \in J}$
be $I$ and $J$ replacement families
for $\Phi$.

Let $a_1 = \inf\{x \in I\} $, $a_2 = \inf\{x \in J\}$ and $b_1 = \sup\{x \in I\} $, $b_2 = \sup\{x \in J\}$.
Assume without any loss of generality that
$a_1\leq a_2$ and $b_1 \leq b_2$
and at least one of the two inequalities
is strict.

Let $K = I \cap J$; let $a,b$ denote, respectively, left and right
boundary points of $K$ and $c = \frac{a+b}{2}\in K$.
Let $\tilde{\Om}$ be a Caccioppoli set minimizing perimeter among all $\Om'$
with $\Om(a) \subset \Om' \subset \Om(b)$.
Define $\phi_1:[a_1,c] \rightarrow [a_1,b_1]$
and $\phi_2:[c,b_2] \rightarrow [a_2,b_2]$
given by $\phi_1(x)=a_1+\frac{b_1-a_1}{c-a_1}(x-a_1)$
and $\phi_2(x)=a_2 + \frac{b_2-a_2}{b_2-c}(x-c)$.
We define an $I\cup J$ replacement family for
$\Phi$ by setting
\[
\Phi^{I\cup J}(x) = \begin{cases} 
\partial (\Om^I(\phi_1(x)) \cap \tilde{\Om}) & x \in [a_1,c]\\
\partial (\Om^J(\phi_2(x)) \cup \tilde{\Om}) & x \in [c,b_2]
\end{cases}
\]
Observe that $\Phi^{I\cup J}$
is continuous since $\Phi^{I\cup J}(c) = \partial \tilde{\Om}$.
It follows from our choice of $\tilde{\Om}$ that
$\bM(\Phi^{I\cup J}(x))\leq \bM(\Phi^I(\phi_1^{-1}(x)))<W$
for $x \in I \cap (-\infty, c] $ and 
$\bM(\Phi^{I\cup J}(x))\leq \bM(\Phi^J(\phi_2^{-1}(x)))<W$
for $x \in J \cap [c, \infty) $. 
\end{proof}

\begin{lemma}[Extension lemma II]\label{l:union-excessive-repeated}
If $I$ is excessive for $\Phi$ and $J$ has $J\cap I \neq \emptyset$ and is excessive for
\[
\Psi(x) : = \begin{cases} \Phi^I(x) & x \in I \\ \Phi(x) & x \not \in I \end{cases}
\]
then $J \cup I$ is excessive for $\Phi$. 
\end{lemma}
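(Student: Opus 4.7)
My plan is to construct an $(I\cup J)$-replacement family for $\Phi$ by concatenating $\Phi^I$ with $\Psi^J$ at an endpoint of $\bar J$ lying in $\bar I$. The key observation, which I would establish first, is the following: \emph{if $p \in \bar I$ is also an endpoint of $\bar J$, then $\Psi^J(p) = \Phi^I(p)$.} Indeed, $\Psi^J(p) = \Psi(p)$ by the boundary-matching condition in the definition of the $J$-replacement for $\Psi$, and then either $p \in I$ (in which case $\Psi(p) = \Phi^I(p)$ directly), or $p \in \bar I \setminus I$ is an endpoint of $\bar I$, at which $\Phi^I(p) = \Phi(p) = \Psi(p)$. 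This observation is what lets us avoid the sandwich construction used in Lemma \ref{l:union-excessive}.

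Writing $\bar I = [a_1, b_1]$ and $\bar J = [a_2, b_2]$, I would split into cases. If $J \subseteq I$ then $\Phi^I$ itself is the desired $(I\cup J)$-replacement. If $I \subseteq J$ then $\Psi^J$ serves directly: the observation handles boundary matching with $\Phi$ at any endpoint of $\bar J$ that coincides with an endpoint of $\bar I$, while at non-coincident endpoints of $\bar J$ the point lies outside $I$, so $\Psi = \Phi$ there automatically.

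In the genuine partial-overlap case, WLOG $a_1 < a_2$ and $b_1 < b_2$ (the mirrored case is symmetric), so $\bar K := \overline{I \cup J} = [a_1, b_2]$. Since $a_2 \in \bar I$ is an endpoint of $\bar J$, the observation gives $\Phi^I(a_2) = \Psi^J(a_2)$, and I define
\[
\Phi^{I \cup J}(x) = \begin{cases} \Phi^I(x) & x \in [a_1, a_2], \\ \Psi^J(x) & x \in [a_2, b_2]. \end{cases}
\]
Continuity at the gluing point $a_2$ is immediate from the observation; boundary matching with $\Phi$ at $a_1$ and $b_2$ follows from the matching conditions for $\Phi^I$ and $\Psi^J$ respectively (invoking the observation at $b_2$ if $b_1 = b_2$); and the required pointwise $\limsup < W$ bound at each $x \in I \cup J$ is inherited from whichever of $\Phi^I, \Psi^J$ is active in a one-sided neighbourhood of $x$ in $K$.

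I do not anticipate a serious technical obstacle. The only mild fiddliness is the bookkeeping of open/closed/half-open boundary behaviour at coincident endpoints, but the observation handles all four such possibilities uniformly, and at every $x \in K$ requiring the $\limsup$ bound, at least one of the two constituent replacement families supplies it pointwise. The structural reason this is easier than Lemma \ref{l:union-excessive} is that by baking $\Phi^I$ into the definition of $\Psi$ before forming $\Psi^J$, the two replacement families are forced to agree at the interface, so no intermediate perimeter-minimizing Caccioppoli set is needed to bridge them.
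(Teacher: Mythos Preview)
Your approach is correct and essentially identical to the paper's: the paper defines $\Phi^{I\cup J}$ piecewise as $\Phi^I$ on $I\setminus J$, $\Psi^J$ on $J$, and $\Phi$ elsewhere, which is exactly your concatenation once one unwinds the endpoint conventions. Your proof is in fact more carefully written than the paper's one-line argument, since you explicitly verify the matching $\Psi^J(p)=\Phi^I(p)$ at the gluing point and track the open/closed endpoint cases.
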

\begin{proof}
Define an $I\cup J$-replacement family $\Phi^{I\cup J}$ for $\Phi$ by
\[
\Phi^{I\cup J}(x) = \begin{cases} 
\Phi(x) & x \in 
[0,1] \setminus (I \cup J) \\
\Phi^I(x) & x\in I \setminus J \\
\Psi^J(x) & x \in J
\end{cases}
\]
where $\Psi^J$ is a $J$-replacement family for $\Psi$. 
\end{proof}

The following is the technical core of the proof of Theorem \ref{c:non-excessive_minmax}. 

\begin{proposition}[Existence of maximal excessive intervals]\label{p:max_int}
Given an (ONVP) sweepout $\Phi$, if $\hat J$ is excessive for $\{\Phi(x)=\partial\Omega(x)\}$, then there exists an excessive interval $J \supset \hat J$ so that $J$ is maximal in the sense that if $\tilde J$ is excessive with $\tilde J \cap J \not = \emptyset$, then $\tilde J \subset J$. 
\end{proposition}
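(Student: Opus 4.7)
The plan is to apply Zorn's lemma to the poset $\mathcal{F} := \{I : I \text{ is an excessive interval for } \Phi \text{ and } \hat J \subset I\}$, partially ordered by inclusion. Any two members of $\mathcal{F}$ intersect (both contain $\hat J$), so Lemma \ref{l:union-excessive} guarantees that $\mathcal{F}$ is closed under finite unions. Once a maximal element $J \in \mathcal{F}$ is produced, the conclusion of the proposition is immediate: for any excessive $\tilde J$ with $\tilde J \cap J \neq \emptyset$, Lemma \ref{l:union-excessive} gives that $\tilde J \cup J$ is excessive, and since it contains $\hat J$ it lies in $\mathcal{F}$ and contains $J$; maximality of $J$ then forces $\tilde J \cup J = J$, i.e.\ $\tilde J \subset J$.

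Verifying the Zorn hypothesis reduces to showing that the union of any chain in $\mathcal{F}$ is again in $\mathcal{F}$. Given such a chain $\{I_\alpha\}$, monotonicity of the endpoints in $[0,1]$ produces a countable cofinal nested subchain $I_1 \subset I_2 \subset \cdots$ with the same union $J^* := \bigcup_\alpha I_\alpha$. My plan is to produce a replacement family $\Phi^{J^*}$ on $\bar{J^*}$ by iterating the splicing construction of Lemma \ref{l:union-excessive}. Writing $a_n := \inf I_n$ and $b_n := \sup I_n$, with $a_n \searrow a$ and $b_n \nearrow b$ (the endpoints of $J^*$), choose merge points $c_n \in I_n \cap I_{n+1}$ with $c_n \to b$, together with a symmetric choice on the left if $a \notin J^*$. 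Starting from $\Phi^{(1)} := \Phi^{I_1}$ on $\bar I_1$, inductively define $\Phi^{(n)}$ on $\bar I_n$ by applying the Lemma \ref{l:union-excessive} merger at $c_{n-1}$ to the pair $(\Phi^{(n-1)}, \Phi^{I_n})$, using a perimeter-minimizing obstacle Caccioppoli set $\tilde \Omega_{n-1}$ in the sandwich class determined by that construction. Set $\Phi^{J^*}(x) := \lim_n \Phi^{(n)}(x)$ on $J^*$, extended by $\Phi$ at any endpoint lying in $\bar{J^*} \setminus J^*$.

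The main obstacle is to verify that $\Phi^{J^*}$ is an honest replacement family: continuous on $\bar{J^*}$, matching $\Phi$ at the endpoints, and satisfying $\limsup_{J^* \ni y \to x} \bM(\Phi^{J^*}(y)) < W$ for every $x \in J^*$. The strict mass bound at an interior point $x \in J^*$ is the more tractable half: $x$ lies in the interior of some $I_n$, and each splicing step modifies the family only by taking intersections or unions with the obstacles $\tilde \Omega_k$, which by their minimizing property cannot raise the mass beyond the strict bound provided by $\Phi^{I_n}$ on a compact subinterval containing $x$ (uniform by lower semicontinuity of $\bM$ and the pointwise strict inequality guaranteed on $I_n$). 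The delicate part is continuity at a limit endpoint of $\bar{J^*}$ not attained by any $I_n$, say $b = \lim b_n \notin J^*$, where one needs $\Omega^{J^*}(y) \to \Omega(b)$ in $L^1$ as $y \to b^-$. This will be secured by choosing the merge points $c_n$ sufficiently close to $b_n$ and invoking the flat-norm interpolation Lemma \ref{l:close in flat} together with the identity $\Phi^{I_n}(b_n) = \Phi(b_n)$ and continuity of $\Phi$ at $b$; the left endpoint is handled symmetrically. With these verifications in hand, $J^*$ is excessive, Zorn furnishes the required maximal $J \supset \hat J$, and the proposition follows.
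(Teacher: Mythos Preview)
Your Zorn's lemma packaging is fine, and it reduces the problem to exactly the same technical core as in the paper: showing that the union $J^* = \bigcup_n I_n$ of a nested increasing sequence of excessive intervals is again excessive. The genuine gap lies in your construction of the replacement family $\Phi^{J^*}$. The merger of Lemma \ref{l:union-excessive} is designed for two overlapping intervals with neither containing the other, and it necessarily \emph{reparametrizes} (the maps $\phi_1,\phi_2$ stretch $[a_1,c]$ and $[c,b_2]$ onto the full original intervals so that the glued family is continuous at $c$). When $I_{n-1}\subset I_n$ the construction degenerates (the union is just $I_n$), and in any nontrivial interpretation the reparametrizations compound from step to step; there is no reason for the pointwise sequence $\Phi^{(n)}(x)$ to stabilize or even to converge. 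In particular ``$\Phi^{J^*}(x):=\lim_n \Phi^{(n)}(x)$'' is not well-defined as written, and the appeal to Lemma \ref{l:close in flat} for endpoint continuity is not attached to any concrete object.

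The paper confronts exactly this difficulty and resolves it by a different mechanism: rather than a direct limit of mergers, it builds, at stage $i$ and for every $n\geq n_i$, a \emph{full} $J_n$-replacement $\Phi^n_i$ carrying uniform mass bounds $A_1,\dots,A_i<W$ on the fixed compact subintervals $[a_j',b_j']$. The passage $i\mapsto i+1$ is done by sandwiching with monotone (hence convergent) minimizers $L_n,U_n$ and using Lemma \ref{l:close in flat} only once those minimizers are close in flat norm, then volume-reparametrizing. The limit replacement on $J'$ is extracted via Arzel\`a--Ascoli (the families are nested and volume-parametrized, hence equi-Lipschitz), which simultaneously handles existence of the limit, its continuity on $\bar{J'}$, and the strict mass bound on compact subintervals. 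Your sketch does not produce the uniform bounds or the equicontinuity needed to run any such compactness argument; that is the missing idea.
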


\begin{proof}
Let 
\[
\alpha : = \sup\{|\tilde J| : \tilde J \textrm{ excessive }, \hat J \cap \tilde J \not = \emptyset\}. 
\]
Choose excessive intervals $\tilde J_n$ with $\tilde J_n \cap \hat J \neq \emptyset$ and $|\tilde J_n|\to \alpha$. By Lemma \ref{l:union-excessive}, we can replace $\tilde J_n$ by $\tilde J_n \cup \hat J$, and thus assume that $\hat J \subset \tilde J_n$. In particular $\tilde J_n \cap \tilde J_m\neq \emptyset$ for all $m,n$. Using Lemma \ref{l:union-excessive} again, we can replace $\tilde J_n$ by
\[
\bigcup_{m=1}^n\tilde J_m
\]
so that the $\tilde J_n$ form an increasing sequence of excessive intervals (still with $|\tilde J_n| \to\alpha$). Note that the interior of an excessive interval is still excessive, so we can consider $J_n : = (\tilde J_n)^\circ$. Note that $|J_n| \to \alpha$ and the $J_n$ are increasing. 

We will show below that 
\[
J'  : = \bigcup_n J_n
\]
is excessive. Write $J' = (a,b)$. Granted the fact that $J'$ is excessive, we claim that one of the intervals $(a,b),(a,b],[a,b)$, or $[a,b]$ is the desired maximal excessive interval. Note that by Lemma \ref{l:union-excessive}, if $[a,b)$ and $(a,b]$ are excessive, then so is $[a,b]$, so we can choose the largest excessive interval out of these four choices and call it $J$. Suppose that $\tilde J$ is excessive with $\tilde J \cap J\not = \emptyset$. Then, $J \cup \tilde J$ is excessive by Lemma \ref{l:union-excessive} and $\hat J \subset J \cup\tilde J$. Thus, 
\[
|J \cup \tilde J| \leq \alpha,
\]
so $\tilde J \subset \bar J$ (where $\bar J$ is the closure of $J$). Now, $J \cup \tilde J$ is excessive, but strictly larger than $\tilde J$ (by assumption). This contradicts the choice of $J$ as the largest excessive interval out of $(a,b),(a,b],[a,b)$, and $[a,b]$. This shows that $J$ is maximal, as desired.

It thus remains to prove that $J' = \cup_n J_n$ is excessive for a nested sequence of open excessive intervals $J_n$. Write $J_n = (a_n,b_n)$ and set $a'_n = a_n + \frac 1n,b_n'=b_n - \frac 1n$.

Fix $i =0,1,\dots$ and assume we have real numbers $0<A_1,\dots,A_i < W$ and integers $n_i\geq i$ (with $n_1< n_2<\dots<n_i$) so that for $n\geq n_i$, there is a $J_n$-replacement $\{\Phi^n_i(x)=\partial \Omega^n_i(x)\}$ so that 
\[
\Per(\Omega^n_i(x)) \leq A_j
\]
for $x \in [a_j',b_j']$ and $1 \leq j \leq i$. (Note that for $i=0$, we can find such objects because the $J_n$ are excessive.) 

We will choose $0 < A_{i+1} < W$, and $n_{i+1}>\max\{n_i,i+1\}$ so that we can construct $J_n$-replacements $\{\Phi^n_{i+1}(x)=\partial \Omega^n_{i+1}(x)\}$ for $n\geq n_{i+1}$ with 
\[
\Per(\Omega^n_{i+1}(x)) \leq A_j
\]
for $x \in [a_j',b_j']$ and $1 \leq j \leq i+1$. Granted this, we can easily (inductively) complete the proof by passing $\Phi^{n_{i+1}}_{i+1}$ to a subsequential limit (using Arzel\`a--Ascoli).

 It is useful to introduce the following notation, used in the construction of $\Phi^n_{i+1}$. Given two nested sets of finite perimeter $V \subset W$, we
	let 
	\begin{itemize}
		\item $\cM_{V,W}$ an outermost Caccioppoli set minimizing perimeter among all the Caccioppoli sets $\Om$ with $V \subset \Om \subset W$;
		\item $\{\cV_{(V,W)}(x)\}_x$ the optimal nested homotopy from $V$ to $W$.
	\end{itemize} 
For $n \geq n_i$, we set
\[
L_n : = \cM_{\Omega(a_n),\Omega^{n_i}_i(a_{i+1}')}, \qquad U_n: = \cM_{\Omega^{n_i}_i(b_{i+1}'),\Omega(b_n)}
\]
Note that for $n \leq m$, $ L_m \subset L_n$ and $U_n \subset U_m$. Hence, $L_n$ and $U_n$ have $\cF$-limits as $n\to\infty$. For $\eps > 0$ fixed so that 
\[
\max\left\{\Per\left(\Omega^{n_{i}}(a_{i+1}')\right),\Per\left(\Omega^{n_i}_i(b_{i+1}')\right)\right\}+\eps < W,
\]
Lemma \ref{l:close in flat} thus guarantees that there is $n_{i+1}\geq i+1$ sufficiently large so that for $n\geq n_{i+1}$, 
\[
\sup_t \Per\left(\cV_{(L_n,L_{n_{i+1}})}(t)\right) < W, \qquad \sup_t \Per\left(\cV_{(U_{n_{i+1}},U_n)}(t)\right) < W. 
\]
For $n\geq n_{i+1}$, we define
\[
\tilde \Phi^n_{i+1}(x) = \begin{cases}
\partial\left(\Omega^n_i(x+1) \cap L_n\right) & x \in [a_n-1,b_n-1]\\
\partial \tilde \cV_{L_n,L_{n_{i+1}}}(x) & x \in [b_n-1,a_{n_{i+1}}]\\
\partial\left(\Omega^{n_{i+1}}_i(x) \cup L_{n_{i+1}} \cap U_{n_{i+1}} \right)& x \in [a_{n_{i+1}},b_{n_{i+1}}]\\
\partial \tilde \cV_{U_{n_{i+1}},U_n}(x) & x \in [b_{n_{i+1}},a_n+1]\\
\partial\left(\Omega^n_i(x-1) \cup U_n \right)& x \in[a_n+1,b_n+1]. 
\end{cases}
\]
Here, the $\tilde \cV$ are the homotopies $\cV$ reparametrized to be defined on the given intervals (the exact parametrization is immaterial). It is easy to check that $\tilde\Phi^n_{i+1}$ is continuous. 

Let $\Phi_{i+1}^n$ denote the reparametrization of $\tilde\Phi_{i+1}^n$ by volume. We have arranged that $\Phi_{i+1}^n$ is a $J_{n}$-replacement. Moreover, for $x \in [a_{i+1}',b_{i+1}']$, we have that $\Phi^n_{i+1}(x) = \Phi^{n_{i+1}}_i(x)$, so 
\[
\bM(\Phi^n_{i+1}(x)) \leq A_j
\]
for $x \in [a_j',b_j']$ and $1\leq j\leq i$. Finally, we can set 
\[
A_{i+1} : = \sup_{x\in[a_{i+1}',b_{i+1}']} \bM(\Phi^{n_{i+1}}_{i}(x)) < W
\]
(which is independent of $n$). This completes the proof. 
\end{proof}

\subsection{Proof Theorem \ref{c:non-excessive_minmax}}
We are now able to complete proof of Theorem \ref{c:non-excessive_minmax}

		Let $\Phi$ be a nested optimal sweepout. Consider the collection $\cA$ of the maximal (with respect to inclusion) excessive intervals for $\Phi$, that is $I\in \cA$ if for every excessive interval $I'$ such that $I'\cap I\neq \emptyset$, we have $I\supset I'$. The existence of maximal intervals follows from Proposition \ref{p:max_int} proven above. 
	
	    Notice that by definition $I \neq J\in \cA$ implies that $I\cap J=\emptyset$, so we can define a new sweepout $\Psi$ in the following way
\[
	    \Psi(x)=\begin{cases}
	    \Phi^I(x) & \quad \text{if }x \in I \in \cA\\
	    \Phi(x) & \quad \text{otherwise}\,.
	    \end{cases}
\]
Note that $\Psi$ is a nested optimal sweepout, so up to reparametrization we can assume it is (ONVP), and moreover by construction $\mm(\Psi)\subset \mm(\Phi)$. Suppose that $x \in \mm_L(\Psi)$ is left excessive. Then, there is a $\Psi$-excessive interval $J$ with $(x-\eps,x]\subset J$. We claim that there is $I \in \cA$ with $J \subset I$. Indeed, if $J \cap I = \emptyset$ for all $I \in \cA$, then $J$ is a $\Phi$-excessive interval, contradicting the definition of $\cA$. On the other hand, if there is $I \in \cA$ with $J \cap I \not =\emptyset$, then $J \cup I$ is excessive by Lemma \ref{l:union-excessive-repeated}. Thus, $J \subset I$ by definition of $\cA$ again. Thus, for $y \in (x-\eps,x]\subset I$, $\Psi(y) = \Psi^I(y)$. By the definition of replacement family, we know that if $x_i \in (x-\eps,x]$ has $x_i\to x$, then
\[
\limsup_{i\to\infty} \bM(\Psi^I(x_i)) < W. 
\]
However, this contradicts the assumption that $x \in\mm_L(\Psi)$. The same proof works to prove that $x \in \mm_R(\Psi)$ is not right excessive. This finishes the proof. \qed

\section{Deformation Theorems and Proof of Theorem \ref{thm:nm+index_bound}}\label{ss:deformation_thm}
In this section we conclude the proof of Theorem \ref{thm:nm+index_bound}. By Theorem \ref{c:non-excessive_minmax}, there exists an (ONVP) sweepout $\Phi$ so that every $x \in \mm_L(\Phi)$ is not left excessive and every $x\in\mm_R(\Phi)$ is not right excessive. By Almgren--Pitts pull-tight and regularity theory \cite{Pitts}, we find that for some $x_0\in\mm(\Phi)$, there is a min-max sequence $x_i\to x_0$ so that $|\Phi(x_i)|$ converges to some $V \in \cR$. Indeed, we can pull-tight $\Phi$ to find a sweepout (in the sense of Almgren--Pitts, not in the (ONVP) sense considered in this paper) $\tilde\Phi$; we have that $\C(\tilde\Phi) \subset \C(\Phi)$ and some $V\in\C(\tilde\Phi)$ is in $\cR$. By replacing
$\Phi(x)$ by $\Phi(1-x)$ if necessary,
we can then assume for the rest of this section that:
\begin{equation}\label{e:no_can}
\begin{gathered}
\text{there is a (ONVP) sweepout $\{\Phi(x)=\partial \Omega(x)\}$ and $x_i \nearrow x_0 \in \mm_L(\Phi)$, so that}\\
\text{$|\Phi(x_i)| \to V\in\cR$ and $\Phi$ is not left excessive at $x_0$}
\end{gathered}
\end{equation}

We then consider two cases: $\bM(\Phi(x_0)) = W$ (no cancellation) and $\bM(\Phi(x_0)) < W$ (cancellation). We analyze the geometric properties of $V$ in both cases separately,
proving deformation theorems reminiscent of those in \cite{MN16}. 
	
\subsection{No cancellation} 
Throughout this subsection we will assume the no cancellation condition
\[
\bM(\Phi(x_0)) = W \,.
\] 
In this case we have that $|\Phi(x_i)| \to |\partial \Omega|$, see for instance \cite[Proposition A.1]{DL}, so we can rephrase our assumption \eqref{e:no_can} as
\begin{equation}\label{e:no_canc2}
\begin{gathered}
\text{there is a (ONVP) sweepout $\{\Phi(x)=\partial \Omega(x)\}$ and $x_i \nearrow x_0 \in \mm_L(\Phi)$, so that} \\
\text{$|\Phi(x_i)| \to |\Sigma|:=|\partial \Omega|\in\cR$ and $\Phi$ is not left excessive at $x_0$.}
\end{gathered}
\end{equation}
In particular, in this case the multiplicity bound of Theorem \ref{thm:nm+index_bound} follows immediately.




\begin{proposition}
\label{p:pairs}
Let $\Sigma$ be as in \eqref{e:no_canc2}. 
Suppose $\Sigma$ is not homotopic minimizing to either side
in some  open set $U$. Then the following holds:
\begin{enumerate}
    \item for every $x \not\in \overline{U}$ there exists $r>0$, such that
$\Sigma$ is minimizing to one side in $B_r(x)$;
    \item for every open set $U'$ disjoint from $U$,
    we have that $\Sigma$ is homotopic minimizing
    to one side in $U'$.
\end{enumerate}

\end{proposition}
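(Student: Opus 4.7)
The plan is to prove parts (1) and (2) together by contradiction, crucially using that $x_0$ is not left-excessive for $\Phi$ by Theorem~\ref{c:non-excessive_minmax}. In either case of failure, I will produce two disjoint open sets---$U$ and a second open set $U'$---in each of which $\Sigma$ admits nested perimeter-non-increasing inner and outer deformations reaching endpoint configurations of strictly smaller perimeter, and I will then splice these into an $I$-replacement family for $\Phi$ on an interval $I$ with $(x_0-\varepsilon,x_0]\subset I$, contradicting non-left-excessivity.

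The first step extracts the two sets of deformations. For (2), the hypothesis directly yields $U'$ disjoint from $U$ where $\Sigma$ fails homotopic minimizing on both sides; Lemmas~\ref{l:existence_homotopic} and~\ref{strict minimizer} then give competitors $\Omega^U_{\mathrm{in}}\subsetneq\Omega\subsetneq\Omega^U_{\mathrm{out}}$ (differing from $\Omega$ only in $U$) with perimeter strictly less than $W$ and analogues $\Omega^{U'}_{\mathrm{in}},\Omega^{U'}_{\mathrm{out}}$ in $U'$, each joined to $\Omega$ by a nested area-non-increasing homotopy supported in the corresponding open set. For (1), if $x\notin\overline{U}$ and no $B_r(x)$ makes $\Sigma$ one-sided minimizing, choose $r$ small enough that $\overline{B_r(x)}\cap\overline{U}=\emptyset$ and take $U':=B_r(x)$: the double failure of one-sided minimizing, combined with the obstacle-problem/interpolation argument from the proof of Proposition~\ref{prop:min-vs-htpy-min} (using Lemma~\ref{l:close in flat}), yields the same sort of nested deformations in $U'$, up to an arbitrarily small perimeter slack that does not affect the contradiction.

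The second step constructs a one-parameter ``dip''. Since $U\cap U'=\emptyset$, the $U$-inner and $U'$-outer modifications act independently on $\Omega$; the resulting two-parameter family $\Omega^*(t,s)$, with $t$ governing the inner $U$-homotopy and $s$ the outer $U'$-homotopy, satisfies $\Omega^*(0,0)=\Omega$, $\Omega^*(1,0)=\Omega^U_{\mathrm{in}}$, $\Omega^*(0,1)=\Omega^{U'}_{\mathrm{out}}$. Traversing the boundary path $(1,0)\to(1,1)\to(0,1)$ in $[0,1]^2$ yields, after volume re-parametrization, a nested volume-increasing family $\{\hat\Omega(v)\}$ from $\Omega^U_{\mathrm{in}}$ to $\Omega^{U'}_{\mathrm{out}}$. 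Along this path at least one of $t,s$ equals $1$ at every point; decomposing the perimeter into its contributions outside $U\cup U'$, inside $U$, and inside $U'$, and using the strict inequalities $\Per(\Omega^U_{\mathrm{in}}\cap U)<\Per(\Omega\cap U)$ and $\Per(\Omega^{U'}_{\mathrm{out}}\cap U')<\Per(\Omega\cap U')$ together with the non-increasing property of the other homotopy, we obtain $\sup_v\Per(\hat\Omega(v))\leq W-\delta$ for some uniform $\delta>0$.

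The third step pastes $\hat\Omega$ into $\Phi$ near $x_0$. Choose $a<x_0<c$ close enough that $\Vol(\Omega(a))$ and $\Vol(\Omega(c))$ lie in the interior of the volume range of $\hat\Omega$, and define $\Phi^I$ on $[a,c]$ by turning on the $U$- and $U'$-modifications sequentially on short subintervals near the endpoints and symmetrically near $c$: on one subinterval, run the inner $U$-homotopy on the current set (whose perimeter stays $\leq W$ along the homotopy), reducing the perimeter strictly at the end; on the next, run the outer $U'$-homotopy, bringing the configuration into the family $\hat\Omega$; in the middle, set $\Phi^I(x)=\partial\hat\Omega(\Vol(\Omega(x)))$, with $\Per\leq W-\delta$. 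The main technical obstacle is exactly here: $\hat\Omega(v)$ at $v$ near $\Vol(\Omega)$ differs from $\Omega$ by the size of both modifications, while $\Omega(a)$ differs from $\Omega$ only by a flat-norm error that vanishes as $a\to x_0$, so a direct one-step transition cannot be both short in flat norm and controlled in perimeter. The disjointness $U\cap U'=\emptyset$ is exactly what lets me break the transition into two stages---activating one modification at a time with the other inactive---each of which can be implemented by the corresponding one-sided homotopy together with Lemma~\ref{l:close in flat} for the vanishing residual flat-norm discrepancy, keeping the combined perimeter $\leq W-\delta'$ throughout. The resulting $\Phi^I$ exhibits $I=(a,c)\supset(x_0-\varepsilon,x_0]$ as an excessive interval for $\Phi$, contradicting non-left-excessivity of $x_0$.
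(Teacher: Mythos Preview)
Your overall strategy matches the paper's: both parts reduce to showing that two disjoint open sets, in each of which $\Sigma$ fails one-sided (or homotopic one-sided) minimizing to either side, force $x_0$ to be left-excessive. Your step~2 (the middle ``dip'' with one modification always fully on) is correct and mirrors the paper's construction.

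The gap is in step~3. You glue the dip into $\Phi$ over a short interval $(a,c)\ni x_0$, activating the modifications one at a time and claiming the perimeter ``stays $\leq W$ along the homotopy''. But the definition of an excessive interval requires $\limsup_{y\to x}\bM(\Phi^I(y)) < W$ at \emph{every} $x \in I$. The inner $U$-homotopy $\{\Omega_1^-(t)\}$ only satisfies $\Per(\Omega_1^-(t)) \leq \Per(\Omega) = W$, with strict inequality guaranteed only at the endpoint; during this first activation the second modification is off, so nothing supplies a definite gap. Lemma~\ref{l:close in flat} does not help, since the flat discrepancy between $\Omega(a)$ and $\Omega$ is small but the $U$-modification itself has fixed size. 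You also never say how the $U$-homotopy, which is defined relative to $\Omega$, is applied to $\Omega(a)$ with perimeter control.

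The paper avoids this by gluing over the \emph{full} interval $(x_l,x_r)$ on which $\partial\Omega(x)$ meets $\bigcup_i E_i^+ \setminus \bigcup_i E_i^-$, using $\Omega(x)\cap E_1^-\cap E_2^-$ on the left tail and $\Omega(x)\cup E_1^+\cup E_2^+$ on the right. The key point you are missing is that $E_1^\pm$ are taken to be homotopic minimizers (Lemma~\ref{l:existence_homotopic}) and hence \emph{strict} one-sided minimizers in the swept-out region by Lemma~\ref{strict minimizer}; this forces $\Per(\Omega(x)\cap E_1^-\cap E_2^-) < \Per(\Omega(x)) \leq W$ for every relevant $x$, so the $\limsup<W$ condition is automatic on the tails. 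Replacing your local activation near $a,c$ by this global capping via strict one-sided minimizers repairs the argument.
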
	
	\begin{proof} 
We prove statement (1).
There is $\delta>0$ and Caccioppoli sets $E^-_1 \in \cI(\Omega\,|\,U)$ and $ E^+_1\in \cO(\Omega\,|\,U)$ with 
	\begin{equation}\label{e:vol_dec}
	\Per(E^\pm_1\,|\, U)\leq \Per(\Omega\,|\, U)-\delta\,,
	\end{equation}
	and nested families $\{\Omega^-_1(x)\,:x\in [0,1]\} \in \cI(\Omega, E_1^-\,|\,U)$ and $\{\Omega^+_1(x)\,:x\in [0,1]\} \in \cO(\Omega, E_1^+\,|\,U)$. Furthermore, by Lemma \ref{l:existence_homotopic}, we can assume that $E_1^+$ are inner and $E_1^-$ are outer homotopic minimizers in $U$.
	
    Let $x \in \Sigma \setminus \overline{U}$ and assume, for contradiction,
	that $\Sigma$ is not area minimizing on both sides 
	in every ball $B_r(x)$, $r< {\rm dist}(x, U)$. Let $E^-_2 \subset \Omega$, 
	with $\Omega \setminus  E^-_2 \subset B_r(x)$, denote a Caccioppoli set
	that is a strict outer minimizer in $\Omega \cap B_r(x)$.
	Similarly, let $ \Omega \subset E^+_2$, with
	$ E^+_2 \setminus \Omega \subset B_r(x)$, denote a Caccioppoli set
	that is a strict inner minimizer in $\Omega \cap B_r(x)$.
    We have $$\Per(\Omega)> \max\{\Per(E^\pm_2) \}.$$
	If we choose $r>0$ sufficiently small,
	then, by Lemma \ref{l:close in flat}, there exist nested families $\{\Omega^-_2(x)\,:x\in [0,1]\}$ and $\{\Omega^+_2(x)\,:x\in [0,1]\} $ that interpolate between $E^-_2$ and 
	$\Omega$ and between $\Omega$ and $E^+_2$ and satisfying
\begin{equation}\label{e:vol_dec2}
\Per(\Omega^\pm_2(x))\leq \Per(\Omega) + \frac{\delta}{2}
\end{equation}

	Let $(x_l,x_r)\neq \emptyset$ be the interval (since $\Phi$ is nested) such that
	$$
	\Phi(x) \cap (\cup_i E_i^+\setminus \cup_i E_i^-)\neq \emptyset \, .
	$$
	Then we define a family $\bar \Psi\colon [x_l-2, x_r+2] \to \mathcal{Z}_{n}(M; \mathbb{Z}_2)$ by setting
	$$
	\bar \Psi(x):=
	\begin{cases}
	\partial\left(\Omega(x+2)\cap E_1^-\cap E_2^-\right) & \text{if } x\in (x_l-2,x_0-2]\\
	\partial \left(\Omega_1^-(x-x_0+2)\cap E_2^- \right) & \text{if } x\in [x_0-2,x_0-1]\\
	\partial \left(\Omega_1^+(x-x_0+1)\cap E_2^-\right) & \text{if } x\in [x_0-1,x_0]\\
	\partial \left(\Omega_2^-(x-x_0)\cup E_1^+\right) & \text{if } x\in [x_0,x_0+1]\\
	\partial \left(\Omega_2^+(x-x_0-1) \cup E_1^+\right) & \text{if } x\in [x_0+1,x_0+2]\\
	\partial\left(\Omega(x-2)\cup E_1^+\cup E_2^+\right) & \text{if } x\in [x_0+2,x_r+2)
	\end{cases}
	$$
	It is easy to see that $\bar \Psi$ is continuous, and moreover notice that, since by Lemma \ref{strict minimizer} $E_1^+$ is a strict inner minimizer in $U$ and $E_{1}^-$ strict outer minimizers in $U$, we have that
	$$
	\limsup_{y\to x} \bM (\bar \Psi(y)) < \limsup_{y\to x} \bM(\bar\Phi(y)) \leq W 
	$$
	for $x\in(x_l-2,x_0-2] \cup [x_0-2,x_r-2)$.
	Since the families $\Omega_1^\pm(x)$ do not increase the volume of $\Sigma$ in $U_i$ and using \eqref{e:vol_dec} and \eqref{e:vol_dec2}, we also have
	$$
	\bM (\bar \Psi(x)) \leq W -\frac{\delta}{2} \qquad \forall \, x\in [x_0-2, x_0+2]\,.
	$$
	We let $\Psi$ be the volume reparametrization of the nested sweepout $\bar \Psi$, then $\Psi$ is a $(x_l,x_r)$-replacement for $\Phi$, thus giving a contradiction with the fact that $x_0\in (x_l,x_r)$ and $x_0\in \mm_L(\Phi)$.
	
	The proof of statement (2) is completely analogous.
\end{proof}


\begin{proposition}
\label{p:def_thm}
Let $\Sigma$ be as in \eqref{e:no_canc2}, then the following holds
\begin{enumerate}
    \item $\Index(\Sigma)\leq 1$;
    \item If $\Index(\Sigma)=1$, then 
 for every point $x \in \Sigma$ there exists $r>0$, such that
$\Sigma$ is minimizing to one side in $B_r(x)$;
    \item If $\hnm(\Sigma)$ is non-empty, then $\Sigma$ is stable,
    $\cH^0(\hnm(\Sigma))=1$ and for every point $x \in \Sigma\setminus \hnm(\Sigma)$ there exists $r>0$, such that $\Sigma$ is minimizing to one side in $B_r(x)$.
\end{enumerate}
In particular, Theorem \ref{thm:nm+index_bound} holds in the case of no cancellations.
\end{proposition}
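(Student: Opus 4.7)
The plan is to apply Proposition \ref{p:pairs} repeatedly, after converting Morse instability of $\Sigma$ into the failure of $\Sigma$ to be homotopic minimizing on either side in an open set. Specifically, if an open set $U \subset M$ is such that $\reg(\Sigma) \cap U$ carries a function $\phi$ with $Q(\phi) < 0$ that can be chosen of definite sign (for instance the positive first Dirichlet eigenfunction of the Jacobi operator on a suitable subdomain), then the graphs of $\pm t\phi\nu$ over $\reg(\Sigma)$ for $t \in [0,\varepsilon]$ define, for $\varepsilon$ small, nested outer and inner deformations of $\Omega$ in $U$ that strictly decrease perimeter (since the first variation vanishes and $Q(\phi)<0$ governs the second). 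In particular $\Omega$ is neither an inner nor an outer homotopic minimizer in $U$.

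Given this reduction, I would prove (1) by contradiction: if $\Index(\Sigma) \geq 2$, standard domain decomposition (two $L^2$-orthogonal negative modes, localized via Courant-type nodal arguments) yields two disjoint open sets $U_1, U_2 \subset M$ each carrying such a $\phi$. Then $\Sigma$ is not homotopic minimizing to either side in both $U_1$ and $U_2$, contradicting Proposition \ref{p:pairs}(2). For (2), given $\Index(\Sigma) = 1$ and $x \in \Sigma$, the key point is a capacity estimate: since $n \geq 7 \geq 2$, small balls (and in particular points) have arbitrarily small $H^1$-capacity in $\reg(\Sigma)$, so cutting off the global unstable eigenfunction in $B_r(x)$ preserves $Q<0$ for $r$ small. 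This produces an unstable open set $U$ disjoint from some $B_r(x)$, and Proposition \ref{p:pairs}(1) then yields a smaller ball around $x$ on which $\Sigma$ is one-sided minimizing.

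For (3), I would apply the same capacity cut-off at $p \in \hnm(\Sigma)$: if $\Index(\Sigma) \geq 1$, the argument above produces an unstable open set $U$ disjoint from some $B_r(p)$, where $\Sigma$ fails to be homotopic minimizing to either side both in $U$ (by instability) and in $B_r(p)$ (by the very definition of $\hnm$), contradicting Proposition \ref{p:pairs}(2). Hence $\Index(\Sigma) = 0$. Uniqueness of $\hnm(\Sigma)$ follows similarly: two distinct $p \neq q \in \hnm$ give small disjoint balls in which $\Sigma$ is not homotopic minimizing to either side, again contradicting Proposition \ref{p:pairs}(2). For $x \in \Sigma \setminus \{p\}$, choose $r$ small so that $x \notin \overline{B_r(p)}$ and apply Proposition \ref{p:pairs}(1) with $U = B_r(p)$ to obtain one-sided minimizing in a ball around $x$.

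Theorem \ref{thm:nm+index_bound} in the no-cancellation case then follows at once: (1) and (3) combine to give $\cH^0(\hnm(\Sigma)) + \Index(\Sigma) \leq 1$; the equality clause is exactly what (2) and (3) provide; and the multiplicity/embeddedness statement is immediate since $V = |\partial \Omega|$ has multiplicity one and is two-sided by construction (so $\kappa_i=1$ for every component, and the exceptional $\kappa_j=2$ clause is vacuous). I expect the main obstacle to be the careful construction of the nested, strictly area-decreasing inner and outer deformations from the eigenfunction — in particular ensuring the normal graphs remain valid Caccioppoli perturbations of $\Omega$ supported away from $\sing(\Sigma)$ — and making the capacity cut-off rigorous when $x$ or $p$ is itself a singular point of $\Sigma$.
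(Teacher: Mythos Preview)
Your proposal is correct and follows essentially the same route as the paper: convert instability into failure of one-sided homotopic minimizing via a signed unstable variation, then feed this into Proposition \ref{p:pairs}; for (1) the paper likewise localizes $\Index \geq 2$ into two disjoint unstable subdomains via nodal domains of the second Dirichlet eigenfunction (Lemma \ref{l:localizing-index-two-sided}). Your capacity cut-off for (2) and (3) is exactly the step the paper leaves implicit when it says these ``immediately follow'' from Proposition \ref{p:pairs}.
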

	
\begin{proof} 
Note that if $U \cap \Sigma$ is smooth and unstable, it is easy to see that $\Sigma$ is not homotopic minimizing to either side in $U$ (just consider the normal flow generated by a compactly supported unstable variation of fixed sign). Statements (2) and (3)
of the Proposition now immediately follow from Proposition \ref{p:pairs}. 
The upper bound on the index (1) follows from (2) of Proposition \ref{p:pairs} and Lemma \ref{l:localizing-index-two-sided} below. 
\end{proof}	
\begin{lemma}[Localizing the index]\label{l:localizing-index-two-sided}
Suppose that $\Sigma \in \cR$ is two-sided and has $\Index(\Sigma) \geq 2$. Then, there is $\Sigma^*_1,\Sigma^*_2 \subset \Sigma$ smooth hypersurfaces with boundary so that the $\Sigma^*_i$ are both unstable (for variations fixing the boundary). 
\end{lemma}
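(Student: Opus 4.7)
The plan is to use Courant's nodal domain theorem applied to the Jacobi operator on the regular part of $\Sigma$. Since $\Sigma$ is two-sided, a compactly supported $\phi \in C^\infty_c(\reg \Sigma)$ generates a normal variation $\phi \nu$ whose second variation of area is
\[
Q(\phi, \phi) = \int_\Sigma \left(|\nabla \phi|^2 - (|A|^2 + \Ric_M(\nu,\nu))\, \phi^2\right) d\cH^n.
\]
Here $\Index(\Sigma)$ is the supremum of dimensions of subspaces of $C^\infty_c(\reg \Sigma)$ on which $Q$ is negative definite; since $\sing(\Sigma)$ has Hausdorff dimension at most $n-7$ (hence $H^1$-capacity zero), this coincides with the usual Morse index of the regular part. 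Under the assumption $\Index(\Sigma) \geq 2$, one can therefore find linearly independent $\phi_1, \phi_2 \in C^\infty_c(\reg \Sigma)$ with $Q$ negative definite on their span.

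First, I would localize the index to a smooth bounded domain. Choose a precompact open set $\Omega \subset \reg \Sigma$ with smooth boundary containing $\supp(\phi_1) \cup \supp(\phi_2)$. By the min-max variational characterization of eigenvalues applied with $\phi_1, \phi_2$ as test functions, the Jacobi operator $L = \Delta_\Sigma + (|A|^2 + \Ric_M(\nu,\nu))$ on $\Omega$ with Dirichlet boundary conditions has at least two negative eigenvalues $\lambda_1 \leq \lambda_2 < 0$, with orthonormal eigenfunctions $\psi_1, \psi_2$.

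Next, I would invoke Courant's nodal domain theorem. The first eigenfunction $\psi_1$ can be taken strictly positive inside $\Omega$, so by orthogonality $\psi_2$ changes sign. The nodal set $\{\psi_2 = 0\}$ then partitions the interior of $\Omega$ into exactly two open nodal domains $R_\pm = \{\pm \psi_2 > 0\}$. The restriction of $\psi_2$ to each $R_\pm$ lies in $H^1_0(R_\pm)$ (since $\psi_2 \in H^1_0(\Omega)$ and $\psi_2$ vanishes on $\partial R_\pm \cap \Omega$) and satisfies $L \psi_2 = \lambda_2 \psi_2$, so the first Dirichlet eigenvalue of $L$ on each nodal domain equals $\lambda_2 < 0$.

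Finally, since nodal domains are open but need not have smooth boundary, I would exhaust each $R_\pm$ from within by an increasing sequence of smooth compact codimension-zero submanifolds with boundary $\Sigma_\pm^{*,j} \Subset R_\pm$. A standard cutoff argument, taking $\eta_j \psi_2$ with $\eta_j \in C^\infty_c(\mathrm{int}(\Sigma_\pm^{*,j}))$ converging to $1$ in $H^1_0(R_\pm)$, produces a smooth, compactly supported test function on $\mathrm{int}(\Sigma_\pm^{*,j})$ for which $Q$ is still negative when $j$ is large. Setting $\Sigma_1^* := \Sigma_+^{*,j}$ and $\Sigma_2^* := \Sigma_-^{*,j}$ then yields disjoint smooth hypersurfaces with boundary in $\Sigma$, each unstable (for variations fixing the boundary), as required. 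I do not anticipate any serious obstacle; the localization of the index in the first step and the interior exhaustion/cutoff in the last step are both routine.
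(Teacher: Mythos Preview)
Your proposal is correct and follows essentially the same approach as the paper: localize the index to a smooth compact domain $\Omega\subset\reg\Sigma$ with Dirichlet index $\geq 2$, take the second Dirichlet eigenfunction of the Jacobi operator, use its (at least two) nodal domains, and then approximate each nodal domain from inside by a smooth compact domain on which the quadratic form is still negative. The only cosmetic difference is in the last step, where the paper uses the superlevel sets $\{u>\eps\}$ (choosing $\eps$ a regular value) as the smooth approximants, while you use an exhaustion with cutoffs; both arguments are standard and yield the required disjoint unstable pieces.
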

\begin{proof}
A standard capacity argument implies that there is a subset $\Sigma' \subset \Sigma$ where $\Sigma'$ is a smooth minimal surface with smooth boundary and $\Index(\Sigma') \geq 2$ (with Dirichlet boundary conditions). Let $u$ denote the second (Dirichlet) eigenfunction (with eigenvalue $\lambda <0$) for the stability operator for $\Sigma$. Because $u$ must change sign, there are at least two nodal domains $\Sigma_1,\Sigma_2\subset \Sigma$. One can find subsets with smooth boundary $\Sigma^*_i\subset \Sigma_i$ so that $\Sigma^*_i$ are unstable. This follows from the argument in \cite[p.\ 21]{Chavel} (namely, by considering $(u|_{\Sigma_i} - \eps)_+$ in the stability operator for $\eps\to 0$ chosen so that $\{u|_{\Sigma_i} > \eps\}$ has smooth boundary). 
\end{proof}

\begin{lemma}\label{lem:snm-hnm}
$\Snm(\Sigma)\subset \mathfrak{h}_\textrm{nm}(\Sigma)$.
\end{lemma}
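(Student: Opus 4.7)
The plan is to argue by contradiction, reducing the claim to Proposition \ref{prop:min-vs-htpy-min} followed by a blow-up analysis. Suppose $p \in \Snm(\Sigma)$ but $p \notin \hnm(\Sigma)$. By definition of $\hnm$, there exists $r_0 > 0$ such that $\Sigma$ is one-sided homotopy area minimizing on at least one side in $B_{r_0}(p)$. Without loss of generality assume $\Omega$ is inner homotopy minimizing in $U := B_{r_0}(p)$; the outer case is symmetric. Shrinking $r_0$ if necessary, $U$ is strictly convex with smooth boundary and (since $p \in \Snm$ means $\Sigma$ is a $C^{1,\omega}$ graph over the regular cone $\cC$ near $p$) the intersection $\supp V \cap U$ is connected. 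Hence the hypotheses of Proposition \ref{prop:min-vs-htpy-min} are satisfied, and one of the two alternatives must hold.

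In the first alternative, $\Omega$ is inner area-minimizing in some small ball $B_\rho(p) \subset U$. I would then rescale: for $\rho_k \searrow 0$, consider $\eta_{p,\rho_k}(\Omega)\cap B_1(0)$, which converges in the flat topology to $\Omega_\cC \cap B_1(0)$, the inside of the tangent cone. Since one-sided area minimality is preserved under rescaling and flat convergence (the rescaled metrics converge to Euclidean in $C^2$), the limit $\Omega_\cC$ is inner area-minimizing in $B_1(0)$. This directly contradicts the fact that, by definition of $\Snm(\Sigma)$, both sides of $\cC$ fail to be one-sided area minimizing.

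In the second alternative, there is a sequence of Caccioppoli sets $E_i \subsetneq \Omega$ with $E_i \Delta \Omega \subset \Omega \cap U$, $|\partial E_i| \in \cR(U)$ having stable regular part, and $\partial E_i \to \partial \Omega$ in flat norm. By the strong maximum principle for stationary integral varifolds with singularities \cite{Ilm}, $\partial E_i \cap \partial \Omega = \emptyset$ in $U$, so the $E_i$ form a genuine one-sided accumulation of stable minimal hypersurfaces on $\Sigma$ from inside. I would then perform a diagonal blow-up at $p$: choose scales $r_k \searrow 0$ and indices $i(k)$ so that $\partial E_{i(k)} \cap B_{r_k}(p)$ carries nontrivial mass at scale $r_k$ and stays strictly inside $\Omega$. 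Applying Schoen--Simon compactness \cite{SS} to the rescaled sequence $\eta_{p,r_k}(\partial E_{i(k)})$ inside the rescaled convex shell, one obtains a stable stationary integral varifold in $B_1(0)$ supported in $\overline{\Omega_\cC}$. By the strong maximum principle, this limit must coincide with $\cC$ (with some positive multiplicity). This provides a sequence of stable minimal hypersurfaces on the inside of $\cC$ converging to $\cC$; combined with the uniqueness of tangent cone and the $C^{1,\omega}$ regularity near $p$, this sequence can be used, via the Hardt--Simon/Liu foliation construction \cite{HS,Liu}, to produce the foliation of one side of $\cC$ by smooth stable minimal hypersurfaces, which is equivalent to $\cC$ being one-sided inner area minimizing in $B_1$. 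This again contradicts $p \in \Snm(\Sigma)$.

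The main obstacle is the second alternative: the blow-up argument requires a careful choice of scales and a subtle passage from a sequence of nearby stable perturbations to one-sided minimality of the cone itself. The case (1) argument is essentially a soft compactness statement, but case (2) relies on combining Schoen--Simon curvature estimates, the sheeting/foliation theorem of Hardt--Simon in the form extended by Liu, and the $C^{1,\omega}$ graph structure at $p$ to ensure that the blow-up limit is indeed $\cC$ rather than some spurious stable cone strictly inside $\Omega_\cC$.
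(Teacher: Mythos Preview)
Your approach is quite different from the paper's, and substantially more complicated. The paper gives a short direct construction: using the $C^{1,\omega}$ chart to identify $\Sigma$ with its cone $\cC=\partial\Omega_\cC$ near $p$, one simply takes competitors $E_\cC^\pm$ for $\Omega_\cC$ in $B_1\subset\RR^{n+1}$ (which exist by the hypothesis that $\cC$ is not one-sided minimizing on either side), dilates them by $|x|$, and reads off
\[
\Per_g(E(x))-\Per_g(\Omega)=-|x|^n\delta\,(1+o(1))\qquad\text{as }x\to 0,
\]
since the rescaled metric converges to the Euclidean one. This immediately shows $\Omega$ is not homotopy minimizing to either side in $B_\eps(p)$. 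No contradiction, no Proposition \ref{prop:min-vs-htpy-min}, no blow-up of auxiliary surfaces.

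Your case (1) is fine, but case (2) has a real gap. You obtain a sequence of stable minimal hypersurfaces in $\Omega$ accumulating on $\Sigma$, and after a diagonal blow-up you get (at best) a sequence of stable hypersurfaces in $\Omega_\cC$ converging to $\cC$. From this you want to invoke Hardt--Simon/Liu to produce a one-sided foliation and hence conclude that $\cC$ is one-sided minimizing. But the Hardt--Simon and Liu results run in the \emph{opposite} direction: they take as hypothesis that $\cC$ is (one-sided) area minimizing and then prove uniqueness/foliation of nearby stable leaves. You cannot use them to deduce one-sided minimality of $\cC$ from the mere existence of nearby stable surfaces --- that would be circular. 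Nor does your diagonal blow-up give a foliation: the varifold limit is $\cC$ itself by the maximum principle, which carries no information about one-sided minimality.

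In short, you have missed the elementary scaling argument that the $C^{1,\omega}$ hypothesis is designed to enable, and in trying to route through Proposition \ref{prop:min-vs-htpy-min} you land in its second alternative, which that proposition deliberately leaves open (cf.\ the remark following it). The fix is to abandon the contradiction strategy and construct the homotopies directly from the cone's competitors.
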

\begin{proof}
Suppose that $p \in \Snm(V)$, we claim that $\Sigma$ is not homotopic minimizing to either side in $B_\eps(p)$ for any $\eps>0$ sufficiently small. Indeed, by assumption, the unique tangent cone $\bC = \partial\Omega_\bC$ to $\Sigma$ at $p$ is not minimizing to either side. This implies that there are Caccioppoli sets $E_\bC^- \subset \Omega_\bC \subset E_\bC^+$ so that $E_\bC^\pm \Delta\Omega_\bC \subset B_1 \subset \RR^{n+1}$ and so that
\[
\Per_{\RR^{n+1}}(E_\bC^\pm\,|\,B_1) \leq \Per_{\RR^{n+1}}(\Omega_\bC\,|\,B_1) - \delta.
\]
Choose $C^{1,\omega}$ coordinates on $M$ around $p$ so that $\Omega = \Omega_\bC$ in $B_\eps(p)$ and so that $g_{ij}(p) = \delta_{ij}$, which we can do since $g\in C^2$ and $\Sigma$ is a $C^{1,\omega}$ deformation of $\bC$ near $p$ by assumption. Then, set 
\[
E(x) := \begin{cases}
 (\Omega\setminus B_\eps) \cup (|x| E_\bC^- \cap B_\eps) & x < 0\\
 \Omega & x = 0\\
 (\Omega\setminus B_\eps) \cup (|x| E_\bC^+ \cap B_\eps) & x > 0\\
 \end{cases}
\]
We have that 
\[
\Per_g(E(x)) - \Per_g(\Omega) = - |x|^n \delta (1+o(1))
\]
as $x\to 0$ (since the metric $g_{ij}$ converges to the flat metric $\delta_{ij}$ after rescaling $|x| \to 1$, by the $C^{1,\omega}$ regularity of the chart). This shows that $\Sigma$ is not homotopic minimizing to either side in $B_\eps(p)$, so $p\in\mathfrak{h}_\textrm{nm}(\Sigma)$ as claimed. 
\end{proof}

\subsection{Cancellation} We will assume the cancellation condition
\[
\bM(\Phi(x_0)) < W
\]
throughout this subsection. In particular, we can find $q \in \reg V$ so that for all $\eps>0$ sufficiently small,
\[
\Per(\Omega\,|\,B_\eps(q)) < | V |(B_\eps(q)) 
\]
where $\partial \Omega = \Phi(x_0)$. Like in the previous section we set $\Sigma := \supp V$. 

Furthermore we set $V=\sum_{i}\kappa_i\,|\Sigma_i|$, where each $\Sigma_i$ is a minimal hypersurface with optimal regularity and $\kappa_i\in \NN$ are constant multiplicities, by the constancy theorem \cite[Theorem 41.1]{Sim}. So \eqref{e:no_can} becomes

\begin{equation}
\label{e:canc}
\begin{gathered}
\text{there is a (ONVP) sweepout $\{\Phi(x)=\partial \Omega(x)\}$ and $x_i \nearrow x_0 \in \mm_L(\Phi)$, so that} \\
\text{$|\Phi(x_i)| \to V=\sum_{i} \kappa_i\,|\Sigma_i|\in\cR$, $\Phi$ is not left excessive at $x_0$ and} \\
\text{there is $q\in \Sigma$ such that }\, \Per(\Omega\,|\,B_\eps(q)) \leq | V |(B_\eps(q))-\delta(\eps) \quad \text{for all } \eps>0\,.
\end{gathered}
\end{equation}

We write $\Omega=\Omega(x_0)$ and observe that $\Sigma\subset \overline\Omega$. We would like to claim that $\Sigma$ is homotopically minimizing, but this condition might not make sense if $\Sigma$ is one-sided. However, thanks to the cancellation we can actually prove 
that $\Sigma$ is area-minimizing in its neighborhood in $\Omega$
away from a small ball around $q$.

\begin{definition} \label{def:nbhd minimizing}
We will call a set $\Omega'$ a $(q,\eps, \tau, \Sigma, \Omega)$-competitor
if $$\big(\Omega \setminus B_\tau(\Sigma)\big)  \cup \big(B_\eps(q) \setminus \Sigma \big) 
\subset \Omega' \subsetneqq 
\Omega \setminus \Sigma \, .$$
An $(q,\eps, \tau, \Sigma, \Omega)$-competitor $\Omega'$ will be called a
minimizing competitor if 
its perimeter is strictly less than perimeter of any 
$(q,\eps, \tau, \Sigma, \Omega)$-competitor $\Omega''$ 
with $\Omega' \subset \Omega''$.
(Note that we do not require $\Per (\Omega')$ to 
be less that the perimeter of all competitors, but
only those that contain $\Omega'$).
\end{definition}



\begin{proposition} \label{thm:no_competitor}
    Suppose \eqref{e:canc} holds, then 
    for every $\eps>0$ there is $\tau>0$,
    such that 
    minimizing $(q,\eps, \tau, \Sigma, \Omega)$-competitor
    does not exist.
\end{proposition}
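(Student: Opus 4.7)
The plan is to argue by contradiction: assuming that a minimizing $(q,\eps,\tau,\Sigma,\Omega)$-competitor $\Omega'$ exists for all $\tau>0$, I would use $\Omega'$ for suitably small $\tau$ to build a replacement family witnessing that $\Phi$ is left excessive at $x_0$, contradicting the non-excessiveness of $\Phi$ guaranteed by Theorem \ref{c:non-excessive_minmax}.

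The first step is to extract regularity and a perimeter bound for $\Omega'$. Since $\Omega'$ minimizes perimeter strictly against containing competitors, $\partial\Omega'$ solves a one-sided obstacle problem in $B_\tau(\Sigma)\setminus\overline{B_\eps(q)}$ with obstacle $\partial\Omega$. As in the proof of Proposition \ref{prop:min-vs-htpy-min} (cf.\ \cite[\S 1.9, \S 1.14(iv)]{Tam}), combined with the Schoen--Simon curvature estimates \cite{SS}, this yields $|\partial\Omega'|\in\cR$ with stable regular part. For the perimeter bound, I would construct containing competitors $\Omega''_n \supsetneq \Omega'$ by taking $\Omega''_n := \Omega\setminus C_n$ for small balls $C_n\subset\Omega\setminus\Omega'$ with $|C_n|\to 0$ and $\Per(C_n)\to 0$; these satisfy $\Per(\Omega''_n)\leq \Per(\Omega)+\Per(C_n)\to\Per(\Omega)$, so strict minimality forces $\Per(\Omega')\leq\Per(\Omega)$, and hence $\Per(\Omega')<W$ by the cancellation hypothesis \eqref{e:canc}.

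The second step is to construct the replacement family. For $\tau$ small enough that $|\Omega\setminus\Omega'|$ is small, Lemma \ref{l:close in flat} produces a nested monotone homotopy $\{\Gamma(s)\}_{s\in[0,1]}$ from $\Omega'$ to $\Omega$ whose perimeter stays below $\Per(\Omega)+\tfrac{1}{3}(W-\Per(\Omega))<W$ throughout. The replacement family $\Phi^I$ on an interval of the form $[a,x_0]$ (with $a<x_0$ suitably chosen so that $\Omega(a)$ is flat-close to $\Omega$, hence $L^1$-close to $\Omega'$) is obtained by going from $\Omega(a)$ to a nearby intermediate set such as $\Omega(a)\cap\Omega'$ via Lemma \ref{l:close in flat}, then following $\Gamma$ back to $\Omega=\Omega(x_0)$. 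Volume-reparametrizing produces the desired continuous family whose mass is strictly below $W$ on the interior of $[a,x_0]$, showing $\Phi$ is left excessive at $x_0$.

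The main obstacle I foresee is twofold. First, one needs to select $a<x_0$ close enough to $x_0$ that $\bM(\Phi(a))$ is bounded strictly below $W$; this is delicate because $\bM$ is only lower semicontinuous in flat topology and the condition $x_0\in\mm_L(\Phi)$ forces some subsequences to approach $W$. This is overcome by using $\Omega'$ itself as a ``bypass'' around the high-mass portion of the sweepout near $x_0$: since $\Omega'$ agrees with $\Omega$ inside $B_\eps(q)$, the cancellation occurring there is preserved, and any extra mass created by the intersection $\Omega(a)\cap\Omega'$ is controlled by Lemma \ref{l:close in flat}. Second, controlling the mass at the gluing points between the three pieces of $\Phi^I$ is handled by repeated application of Lemma \ref{l:close in flat}, which provides interpolations with arbitrarily small perimeter error in terms of the $L^1$-distance between the sets.
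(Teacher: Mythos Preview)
Your overall strategy---deriving a contradiction by exhibiting that $x_0$ is left excessive---is exactly the paper's. The use of Lemma \ref{l:close in flat} to homotope from the competitor $\Omega'$ back to $\Omega$ with perimeter $<W$ is also right, and is precisely what the paper does on the interval $[x_0,x_0+1]$. However, two points deserve comment.

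First, the regularity step is unnecessary. The paper never uses that $\partial\Omega'$ is regular or stable; the competitor is used only as a Caccioppoli set. (Your perimeter bound $\Per(\Omega')\leq\Per(\Omega)<W$ is fine, though the paper instead compares $\Omega'$ to competitors built directly from the cancellation hypothesis, which have perimeter $\approx W-\delta(\eps)$.)

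Second, and more seriously, your construction of the replacement family near the \emph{left} endpoint has a genuine gap---one you correctly flag as an obstacle but do not resolve. You propose to pick $a<x_0$ and interpolate from $\Omega(a)$ to $\Omega(a)\cap\Omega'$ via Lemma \ref{l:close in flat}. But that lemma only bounds the perimeter along the homotopy by $\max\{\Per(\Omega(a)),\Per(\Omega(a)\cap\Omega')\}+\eps'$, and since $x_0\in\mm_L(\Phi)$ there is no choice of $a$ with $\Per(\Omega(a))$ bounded away from $W$; the resulting family can overshoot $W$. The paper's resolution is to use the \emph{strict} minimizing property of $\Omega'$ in the way it was designed for: set $\Psi(x)=\partial(\Omega(x)\cap\Omega')$ for \emph{all} $x$ in the natural interval $(x_l,x_0]$, where $x_l$ is the infimum of $x$ with $\Omega(x)\setminus\Omega'\neq\emptyset$. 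At $x_l$ one has $\Omega(x_l)\cap\Omega'=\Omega(x_l)$, so the family glues to $\Phi$ automatically. For $x\in(x_l,x_0)$, the set $\Omega(x)\cup\Omega'$ is a competitor strictly containing $\Omega'$, so strict minimality gives $\Per(\Omega')<\Per(\Omega(x)\cup\Omega')$; submodularity of perimeter then yields $\Per(\Omega(x)\cap\Omega')<\Per(\Omega(x))\leq W$. This is the missing mechanism: rather than relying on Lemma \ref{l:close in flat} at the left end, one lets the strict one-sided minimality of the competitor do the work along the whole sweepout.
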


\begin{proof} 
For contradiction suppose 
there exists a minimizing $(q,\eps, \tau, \Sigma, \Omega)$-competitor
$U$. Observe that by the cancellation
assumption for every $\eta>0$ 
we can find $(q,\eps, \tau, \Sigma, \Omega)$-competitors 
$\Omega'$
with $\Per(\Omega') \leq W+ \eta - \delta(\epsilon)$,
where $\delta(\epsilon)$ is from (\ref{e:canc}). 
It follows that 
$$\Per(U) \leq \Per(\Omega')<W \, .$$
If we choose $\tau>0$ sufficiently small,
then by Lemma \ref{l:close in flat}
there exists a nested family $\{E(x)\,:\,x\in [0,1]\}$
with $E(0)= U$, $E(1) = \Omega$ and
$$\Per(E(x))< W \, .$$

Let $(x_l,x_0]$ be the connected interval such that $\Omega(x)\setminus U\neq \emptyset$, where $\{\Phi(x)=\partial \Omega(x)\}$, and define 
family $\Psi \colon (x_l, x_0+1] \to \mathcal Z_n(M, \mathbb Z_2)$ by
$$
\Psi(x):=
\begin{cases}
\partial(\Omega(x)\cap U) & \text{if }x\in (x_l,x_0] \\
\partial  E(x-x_0)
& \text{if }x\in [x_0, x_0+1]
\end{cases}
$$
Clearly $\Psi$ is continuous, since $\Omega=\Omega(x_0)$ and moreover we have that
$$
\limsup_{y\to x} \bM(\Psi(y)) < \limsup_{y\to x} \bM(\Phi(x)) \leq W
$$
for every $x\in (x_l,x_0)$ by
strict minimality condition in Definition \ref{def:nbhd minimizing}. For every $x\in [x_0, x_0+1]$
we also have $\bM(\Psi(x)) = \bM(\partial E(x))<W$. This implies that $x_0$ is left excessive for $\Phi$ which is a contradiction.
\end{proof}

\begin{proposition}\label{p:def-thm-cancel}
Suppose $V=\sum_i\kappa_i \,|\Sigma_i|$ is as in \eqref{e:canc}, then each $\Sigma_i$ has stable regular part and $\hnm(V) = \emptyset$.
Moreover, for every point 
$x \in support(V)$
there exists $r>0$, such that 
the support of $V$ is minimizing to one side in $B_r(x)$
\end{proposition}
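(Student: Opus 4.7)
My plan is to argue by contradiction, leveraging Proposition \ref{thm:no_competitor}: each of the three failure modes (non-stability of some $\Sigma_i$, $\hnm(V)\neq \emptyset$, or failure of the one-sided minimizing property) should produce a minimizing $(q,\eps,\tau,\Sigma,\Omega)$-competitor in the sense of Definition \ref{def:nbhd minimizing}. The first key step is a reduction showing that $\Omega$ itself realizes the infimum of perimeter over the class of all $(q,\eps,\tau,\Sigma,\Omega)$-competitors, for every small $\eps>0$ and every $\tau\leq \tau_0(\eps)$ furnished by Proposition \ref{thm:no_competitor}. This follows from a direct-method argument ($L^1$-compactness of sets of finite perimeter, lower semicontinuity of $\Per$, and closedness of the inclusion constraints $\Omega'\supset \Omega\setminus B_\tau(\Sigma)$ and $\Omega'\supset B_\eps(q)\setminus \Sigma$) combined with a Zorn's-lemma selection of a \emph{maximal} perimeter minimizer within the competitor class: such a maximal minimizer must either coincide with $\Omega$ measure-theoretically, yielding the asserted inequality, or be itself a minimizing competitor in the sense of Definition \ref{def:nbhd minimizing}, which is ruled out by Proposition \ref{thm:no_competitor}.

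Using this reduction, I would first establish the one-sided minimizing property. Fix $p\in \supp V$. The cancellation condition on $q$ is open along $\reg V$ (both $\mu_\Omega$ and $|V|$ are Radon measures), so I may replace $q$ by a nearby $q'\in \reg V$ with $q'\neq p$, still satisfying the cancellation hypothesis. Pick $\eps>0$ small enough that $p\notin \overline{B_\eps(q')}$, set $\tau=\tau_0(\eps)$, and choose $r\leq \tau$ so that $\overline{B_r(p)}\cap \overline{B_\eps(q')}=\emptyset$. Since $p\in \Sigma$, automatically $B_r(p)\subset B_\tau(\Sigma)$. Any $V'\subsetneq \Omega$ (measure-theoretically) with $V'\Delta \Omega\subset B_r(p)$ is therefore a $(q',\eps,\tau,\Sigma,\Omega)$-competitor, and the reduction above yields $\Per(V')\geq \Per(\Omega)$; this is precisely the claim that $\supp V$ is one-sided area-minimizing from the $\Omega$-side in $B_r(p)$.

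Stability of each $\Sigma_i$ and the emptiness of $\hnm(V)$ then follow as direct consequences of the one-sided minimizing property. If some $\Sigma_i$ had unstable regular part, a smooth compactly supported unstable variation in an open portion of $\reg \Sigma_i$ (chosen disjoint from $q'$), pushed in the inward normal direction of $\Omega$, would produce $V'\subsetneq \Omega$ with $V'\Delta \Omega$ contained in a tiny tubular neighborhood of the support and $\Per(V')<\Per(\Omega)$, contradicting the one-sided minimality just established; one-sided components $\Sigma_i$ are handled analogously after lifting to the orientable double cover. Similarly, if $p\in \hnm(V)$, the definition provides for arbitrarily small $r$ a strict nested inner homotopy in $B_r(p)$ ending at some $V'\subsetneq \Omega$ with $\Per(V')<\Per(\Omega)$, again contradicting the local one-sided minimality.

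The most delicate point is the first reduction: the maximal-perimeter-minimizer argument has to be interfaced with the measure-theoretic subtlety that $\Omega\setminus\Sigma$ agrees with $\Omega$ as a Caccioppoli set while the strict-inclusion condition $\Omega'\subsetneq \Omega\setminus\Sigma$ in the definition of competitor nominally excludes $\Omega$ itself. A secondary subtlety arises for components $\Sigma_i\subset \textrm{int}(\Omega)$ coming from even-multiplicity cancellation: there the one-sided minimizing statement must be read after locally distinguishing the two sides of $\Sigma_i$, and the relevant variations of $\Sigma_i$ are still captured by the competitor class through the tubular neighborhood condition $\Omega'\supset \Omega\setminus B_\tau(\Sigma)$.
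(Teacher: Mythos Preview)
Your reduction---that $\Per(\Omega)\leq \Per(\Omega')$ for every competitor $\Omega'$---is a correct consequence of Proposition \ref{thm:no_competitor}, and it cleanly yields the inner-minimizing property of $\Omega$ (hence one-sided minimality of $\supp V$) at every point of $\partial^*\Omega$. The gap is at points of $\Sigma$ lying in the \emph{interior} of $\Omega$, i.e.\ where cancellation is complete (as you note, the one-sided components are exactly of this type, and two-sided components with even multiplicity can be as well). There your reduction carries no information: any competitor of the form $\Omega'=\Omega\setminus U$ with $U$ a slab near such a $\Sigma_i$ satisfies $\Per(\Omega')=\Per(\Omega)+\Per(U)>\Per(\Omega)$, so the inequality $\Per(\Omega')\geq\Per(\Omega)$ is trivially true regardless of the geometry of $\Sigma_i$. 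Consequently your deductions of stability, of $\hnm=\emptyset$, and of local one-sided minimality all break down at interior components---in each case you try to produce $V'\subsetneq\Omega$ with $\Per(V')<\Per(\Omega)$ by pushing $\Sigma_i$ ``inward,'' but at an interior point $\Sigma_i$ is not part of $\partial\Omega$, so such a push does not change $\Omega$ at all. Your final paragraph flags this issue but does not resolve it: the assertion that ``the relevant variations of $\Sigma_i$ are still captured by the competitor class'' is true of the class itself, but \emph{not} of the single scalar inequality you have extracted from it.

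The paper avoids this by not passing through your reduction. It instead argues directly that if some $\Sigma_i$ were unstable (or if one-sided minimality failed at some $p$), one can carry out a constrained minimization in the full tube $B_\tau(\Sigma_i)\setminus B_\eps(q)$ to produce a nearby hypersurface $\Sigma_i'$ and a region $U$ between $\Sigma_i$ and $\Sigma_i'$ so that $\Omega\setminus U$ is itself a \emph{minimizing} competitor in the sense of Definition \ref{def:nbhd minimizing}. The point is that this notion only compares $\Omega\setminus U$ to competitors \emph{containing} it, not to $\Omega$; so it is irrelevant that $\Per(\Omega\setminus U)>\Per(\Omega)$. This is precisely the structure your global inequality throws away. (The paper also uses two cancellation points $q_1,q_2$ rather than sliding a single $q$; your sliding argument for that part is fine.)
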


\begin{proof} 
First we observe that we can find two points $q_1$ and $q_2$ in $\reg V$,
such that for all $\eps>0$ sufficiently small,
\[
\Per(\Omega\,|\,B_\eps(q_j)) < | V |(B_\eps(q)) \, .
\]
By Proposition \ref{thm:no_competitor} we have non-existence
of minimizing $(q_j,\eps, \tau, \Sigma, \Omega)$-competitors
for $j=1,2$. This implies that 
 $\Sigma_i$ is area minimizing to one side
in a small ball around every point of $V$. In particular,
we have $\cH^{0}(\hnm(V)))=0$.

The stability of the regular part of each $\Sigma_i$ also follows from the
non-existence of minimizing $(q,\eps, \tau, \Sigma, \Omega)$-competitors.
Indeed, if a component $\Sigma_i$ has index $\geq 1$,
then for $\eps>0$ sufficiently small, the  minimal
hypersurface $\Sigma_i \setminus B_{\eps}(q)$ with fixed boundary
will be unstable by a standard capacity argument.
If $\Sigma_i$ is two-sided, then by considering a minimization problem
to one side of $\Sigma_i$ 
in $B_\tau(\Sigma_i) \setminus B_\eps(q)$
we can find open set $U \subset \Omega$,
such that $\Omega \setminus U$ is a minimizing 
$(q,\eps, \tau, \Sigma, \Omega)$-competitor.

Suppose $\Sigma_i$ is one-sided.
Since $\Sigma_i \subset \overline{\Omega}$
we have that $B_\tau(\Sigma_i)\setminus \Sigma_i \subset \Omega$
for all sufficiently small $\tau>0$.
In particular, for small $\tau< \eps$ 
we can minimize in the class of hypersurfaces
 $\{S\subset B_\tau(\Sigma_i) : S \cap B_\eps(q) = \Sigma_i \cap B_\eps(q)\}$
 to obtain a minimizer $\Sigma_i'$ in the same homology class
 and open set $U \subset \Omega$ with $\partial U = \Sigma_i \cup \Sigma_i'$.
 Then $\Omega \setminus U$ is a minimizing 
$(q,\eps, \tau, \Sigma, \Omega)$-competitor.
\end{proof}

\subsection{Multiplicity $2$ bound} In this subsection we show that if $\kappa_i> 2$ for some $i$, then $x_0$ is excessive, by using simple comparisons with disks. Notice that if any multiplicity satisfies $\kappa_i\geq 2$ then we must be in the cancellation case considered above. 

\begin{lemma}[Multiplicity $2$ bound]\label{l:mult_bound}
Let $V=\sum_{i}\kappa_i\,| \Sigma_i|$ be as in \eqref{e:no_can}. Then $\kappa_i\leq 2$ for every $i$.
\end{lemma}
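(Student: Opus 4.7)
The plan is to argue by contradiction: suppose some $\kappa_i\ge 3$, which by the remark preceding the lemma puts us in the cancellation case \eqref{e:canc}. I will exploit the gap between the varifold mass $\kappa_i\bM(\Sigma_i)$ and the mass of the reduced boundary $\partial\Omega(x_0)$ (which has multiplicity at most one on $\Sigma_i$) to splice a local area decrease into $\Phi$ on a half-open interval $(x_j,x_0]$, producing an excessive interval and contradicting the non-left-excessivity of $x_0$. First I would pick a regular point $p\in\Sigma_i$ lying away from $\sing V$, from the other components $\Sigma_{i'}$ with $i'\ne i$, and from the cancellation point $q$ in \eqref{e:canc}. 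Then for all sufficiently small $r>0$, $D:=\Sigma_i\cap B_r(p)$ is an almost-flat smooth disk, monotonicity gives $|V|(B_r(p))=(\kappa_i+o_r(1))\bM(D)$, and
\[
\bM(\partial\Omega(x_0)|_{B_r(p)})\le (1+o_r(1))\bM(D),
\]
because $\partial\Omega(x_0)$ is a reduced boundary of multiplicity one a.e.\ and, as a measure, is dominated by $|V|$ (so its support in $B_r(p)$ lies in $\supp V\cap B_r(p)=D$).

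Second, I would define the local replacement $\tilde\Omega_j:=(\Omega(x_j)\setminus B_r(p))\cup(\Omega(x_0)\cap B_r(p))$, choosing $r$ via the coarea inequality so that the slice $\partial B_r(p)$ contributes negligibly to perimeter. The additivity of perimeter and the varifold convergence $\bM(\Phi(x_j)|_{B_r(p)})\to\kappa_i\bM(D)$ then give
\[
\Per(\tilde\Omega_j)\le \Per(\Omega(x_j))-(\kappa_i-1)\bM(D)+o_r(1)+o_j(1)\le W-\eta
\]
for some fixed $\eta>0$, since $\kappa_i\ge 3$. Moreover $\Vol(\Omega(x_j)\Delta\tilde\Omega_j)\to 0$ as $j\to\infty$ because $\Omega(x_j)\to\Omega(x_0)$ in $L^1$ and $\tilde\Omega_j$ agrees with $\Omega(x_j)$ outside $B_r(p)$ and with $\Omega(x_0)$ inside.

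Third, I would use Lemma \ref{l:close in flat} to interpolate from $\Omega(x_j)$ to $\tilde\Omega_j$ via the nested intermediate $\Omega_j^-:=\Omega(x_j)\cap\tilde\Omega_j$ (the pairs $\Omega_j^-\subset\Omega(x_j)$ and $\Omega_j^-\subset\tilde\Omega_j$ both have small volume difference and perimeters below $W-\eta/2$), then apply Lemma \ref{l:close in flat} again to interpolate from $\tilde\Omega_j$ to $\Omega(x_0)$ with mass below $W-\eta/4$ (possible since $\bM(\partial\Omega(x_0))<W$ by cancellation and $\tilde\Omega_j\to\Omega(x_0)$ in $L^1$). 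Concatenating these interpolations in order yields an $\cF$-continuous family on $[x_j,x_0]$ from $\Phi(x_j)$ to $\Phi(x_0)$ with mass strictly below $W$ throughout $(x_j,x_0]$. This is an $(x_j,x_0]$-replacement family for $\Phi$, and since $(x_j,x_0]\supset (x_0-\epsilon,x_0]$, the point $x_0$ is left-excessive, contradicting \eqref{e:canc} and proving $\kappa_i\le 2$.

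The main obstacle I anticipate is making the perimeter bound in Step~2 rigorous: handling the contribution of $\partial B_r(p)$ in $\Per(\tilde\Omega_j)$ and making precise the error terms $o_r(1), o_j(1)$ (reflecting, respectively, the local graph structure of $\Sigma_i$ near $p$ and the varifold convergence $|\Phi(x_j)|\to V$). This is a routine slicing argument once one has the key bound $\bM(\partial\Omega(x_0)|_{B_r(p)})\le (1+o_r(1))\bM(D)$, but it requires choosing $r$ generically so the slice inequality $\cH^n(\partial B_r(p)\cap (\Omega(x_j)\Delta \Omega(x_0)))\to 0$ can be ensured uniformly in $j$.
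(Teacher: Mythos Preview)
There is a genuine gap in Step~3. Because the sweepout is nested you have $\Omega(x_j)\subset\Omega(x_0)$, and hence
\[
\Omega_j^-:=\Omega(x_j)\cap\tilde\Omega_j
=\big(\Omega(x_j)\setminus B_r(p)\big)\cup\big(\Omega(x_j)\cap\Omega(x_0)\cap B_r(p)\big)
=\Omega(x_j).
\]
So your ``intermediate'' is just $\Omega(x_j)$, and your claim that its perimeter is below $W-\eta/2$ is false: $\Per(\Omega(x_j))\to W$ along the min-max sequence (and may equal $W$ for some $j$). Lemma~\ref{l:close in flat} applied to the nested pair $\Omega(x_j)\subset\tilde\Omega_j$ then only yields $\Per\le \max\{\Per(\Omega(x_j)),\Per(\tilde\Omega_j)\}+\eps$, which is $\approx W+\eps$ near the left end, so the family you build does not satisfy the $\limsup<W$ condition on $(x_j,x_0]$ and you do not obtain an excessive interval. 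There is no a~priori relation between $x_0-x_j$ and $W-\Per(\Omega(x_j))$ that would let you choose $\eps$ small enough and still meet the volume threshold $\delta(\eps)$ of Lemma~\ref{l:close in flat}.

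The paper avoids this by not splicing in $\Omega(x_0)\cap B_r(p)$ explicitly. Instead it takes a \emph{strict one-sided outer minimizer} $\Omega_j'\subset\Omega(x_j)$ inside a thin tubular neighborhood $U=B_r(p)\cap B_\tau(\Sigma_i)$. The minimizing property, combined with submodularity of perimeter, gives $\Per(\Omega(x)\cap\Omega_j')<\Per(\Omega(x))\le W$ for every $x$ with $\Omega(x)\setminus\Omega_j'\ne\emptyset$, so the family $x\mapsto\Omega(x)\cap\Omega_j'$ already sits strictly below $W$ without any interpolation. The comparison with $\partial U$ (two nearly flat caps) gives $\Per(\Omega_j'\,|\,U)\le (2+o(1))\omega_n r^n$, which is where $\kappa_i\ge 3$ is used. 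Finally, Proposition~\ref{thm:no_competitor} is invoked to force $\Omega_j'\to\Omega(x_0)$; only then is Lemma~\ref{l:close in flat} applied, between $\Omega_j'$ and $\Omega(x_0)$, both of which have perimeter bounded away from $W$. Your argument is missing the minimizer step and the appeal to Proposition~\ref{thm:no_competitor}; without them the replacement family cannot be kept strictly below $W$.
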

	
\begin{proof}  Suppose by contradiction $\kappa_i\geq 3$ for some $i$. Then let $p\in \reg(\Sigma_i)$, $p\neq q$ (where $q$ is the cancellation point considered above). Consider a ball  $B_r(p)$, $r < \frac{1}{2}dist(p,q)$, sufficiently small so that $\Sigma_i\cap B_r(p)$ is two-sided. Let $\tau(r)>0$ be a small constant
to be chosen later
and set $U = B_r(p) \cap B_\tau(\Sigma_i)$.

Consider sequence $x_j \nearrow x_0$
with $|\partial \Omega(x_j)| \rightarrow V$.
We can assume that the radius $r$ was
 chosen sufficiently small, so that
\begin{equation}\label{e:mult_bound1}
    \bM(\partial \Omega(x_j) \cap U)\geq \left(\kappa_i - \frac{1}{10}\right) \omega_n  r^n\,,
\end{equation}
for all $j$ large enough, where $\omega_n$ denotes the measure of the $n$-dimensional ball of radius one.

Let $\Omega_j' \subset \Omega(x_j)$, $\Omega_j' \setminus U = \Omega(x_j)
\setminus U$, be a strict one-sided outer area minimizer
in $\Omega(x_j)\cap U$.
Observe that if $\Omega'_j$ does not converge to $\Omega(x_0)$,
then $\lim \Omega_j'$ is a $(q,\frac{1}{2}dist(p,q), \tau, \Sigma, \Omega(x_0))$-competitor,
which contradicts Proposition \ref{thm:no_competitor}.

We conclude that $\lim \Omega_j' = \Omega(x_0)$.
On the other hand, 
by comparing $\Omega(x_j) \setminus U$ to $\Omega_j'$ and
assuming that $\tau(r)$
was chosen sufficiently small, we have that one-sided 
area minimizing property of $\Omega_j'$ implies 
\begin{equation*}
    \bM(\partial \Omega_j' \cap U)\leq \Per(U) \leq \left(2 + \frac{1}{10}\right) \omega_n  r^n\,,
\end{equation*}
For $\tau(r)$
 sufficiently small and $j$ large we can apply
Lemma \ref{l:close in flat} to find a nested family $E(x)$
interpolating between $\Omega_j'$ and $\Omega$, such that
\begin{align*}\Per( E(x) ) &\leq \max \{\bM(\partial \Omega_j' \setminus U),
\bM(\partial \Omega(x_0) \setminus U) \} + \left(2 + \frac{2}{10}\right) \omega_n  r^n\\
& \leq W - \left( 1-\frac{3}{10} \right) \omega_n r^n.
\end{align*}
By combining families $\Omega(x) \cap \Omega_j'$
and $E(x)$ we obtain that $x_0$ is left-excessive.
\end{proof}

	\subsection{Proof of Theorem \ref{thm:nm+index_bound}} The result follow immediately by combining Corollary \ref{c:non-excessive_minmax} with Propositions \ref{p:pairs}, \ref{p:def_thm}, \ref{p:def-thm-cancel} and Lemma \ref{l:mult_bound}. \qed
		
	\section{Proof of Theorems \ref{thm:generic_bound_ricci},  \ref{thm:generic_bound}, and \ref{thm:generic_stratum}}
    In this section we prove Theorem \ref{thm:generic_stratum} (Theorems \ref{thm:generic_bound_ricci} and \ref{thm:generic_bound} follow immediately from Theorem \ref{thm:generic_stratum} when combined with the facts that when $n=8$ all singularities are regular and that the set of bumpy metrics is open and dense \cite{White:bumpy,White:bumpy2}). Theorem \ref{thm:generic_stratum} will follow from Theorem \ref{thm:nm+index_bound} and Proposition \ref{prop:min-vs-htpy-min}, together with a simple surgery procedure.

    \subsection{Surgery procedure} We show here how to regularize minimal hypersurfaces with regular singularities under the assumption that the hypersurface minimizes area in a small ball around each singularity. 
    \begin{proposition}[Perturbing away regular singularities of locally area minimizing surfaces]\label{p:surgery}
	For $(M^{n+1},g)$ a compact $C^{2,\alpha}$-Riemannian metric and $\Sigma\in\cR$ a minimal hypersurface, recall that $\cS_0(\Sigma)\subset \sing \Sigma$ is defined to be the set of singular points with a regular tangent cone. There is $\tilde g\in \Met^{2,\alpha}(M)$ arbitrarily close to $g$ and $\tilde \Sigma$ arbitrarily close in the Hausdorff sense to $\Sigma$ so that $\tilde\Sigma$ is minimal with respect to $\Sigma$ and $\cS_0(\tilde\Sigma) \subset \hnm(\tilde\Sigma) = \hnm(\Sigma)$. 
\end{proposition}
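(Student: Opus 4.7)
The plan is to resolve each singular point in $\cS_0(\Sigma)\setminus \hnm(\Sigma)$ via a localized Hardt--Simon--Smale-type surgery. By Simon's uniqueness-of-tangent-cones theorem \cite{Simon}, every point of $\cS_0(\Sigma)$ is isolated in $\sing(\Sigma)$, and compactness of $M$ implies that $\cS_0(\Sigma)\setminus \hnm(\Sigma) = \{p_1,\ldots,p_N\}$ is finite. For each $p_j$, the definition of $\hnm$ yields $r_j>0$ such that $\Sigma\cap B_{r_j}(p_j)$ is inner or outer homotopy minimizing (say inner, the other case being symmetric). Applying Proposition \ref{prop:min-vs-htpy-min} in a strictly convex neighborhood $U\subset B_{r_j}(p_j)$ of $p_j$, I conclude that $\Sigma$ is one-sided area-minimizing in some ball $B_{\rho_j}(p_j)$: in case (1) of that proposition this is immediate, while in case (2) the limiting stable Caccioppoli sets $E_i\subset \Omega$ with $\partial E_i\to \partial\Omega$ in flat norm, together with the maximum principle applied at the regular tangent cone, again force $\Sigma$ to be one-sided minimizing near $p_j$ (otherwise $p_j$ would necessarily lie in $\hnm(\Sigma)$, a contradiction).

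Having established one-sided area-minimality of $\Sigma$ in $B_{\rho_j}(p_j)$ with a regular tangent cone at $p_j$, I invoke Hardt--Simon \cite{HS}: one side of the tangent cone is foliated by smooth area-minimizing hypersurfaces. Following Smale's metric-perturbation procedure \cite{Smale}, extended by Liu \cite{Liu} to the one-sided stationary setting, I perturb $g$ in an annular region $B_{\rho_j}(p_j)\setminus B_{\rho_j/2}(p_j)$ to a $C^{2,\alpha}$-close metric $g_j$ so that the one-sided minimizer in $B_{\rho_j}(p_j)$ with boundary data matching $\Sigma$ on $\partial B_{\rho_j}(p_j)$ is a smooth leaf of the perturbed foliation. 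Replacing $\Sigma\cap B_{\rho_j}(p_j)$ by this leaf produces a globally $g_j$-minimal hypersurface, smooth at $p_j$, and Hausdorff-close to $\Sigma$. Iterating the construction over $j=1,\ldots,N$ with pairwise disjointly supported perturbations yields the desired $\tilde g$ and $\tilde\Sigma$.

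Since every metric perturbation and hypersurface modification is supported in a small ball around some $p_j\in \cS_0(\Sigma)\setminus \hnm(\Sigma)$, and these balls are disjoint from $\hnm(\Sigma)$, the non-homotopy-minimizing behavior at each point of $\hnm(\Sigma)$ is preserved verbatim. Within the surgery balls $\tilde\Sigma$ is smooth and hence locally area-minimizing on both sides by White \cite{W}, so no new non-homotopy-minimizing points are created in those regions. This yields $\hnm(\tilde\Sigma)=\hnm(\Sigma)$ and $\cS_0(\tilde\Sigma)\subset \hnm(\tilde\Sigma)$. The main obstacle is the surgery step itself: ensuring the localized metric perturbation produces a smooth foliation leaf matching $\Sigma$ on $\partial B_{\rho_j}(p_j)$ while preserving global minimality of $\tilde\Sigma$. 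The strict one-sided minimizing property of $\Sigma$ in $B_{\rho_j}(p_j)$, provided by Lemma \ref{strict minimizer}, supplies the non-degeneracy needed to execute the Smale--Liu construction; verifying that the homotopy-minimizing conclusion in case (2) of Proposition \ref{prop:min-vs-htpy-min} genuinely promotes to local area-minimality relies crucially on the regularity of the tangent cone at $p_j$.
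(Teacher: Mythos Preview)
Your proposal has two genuine gaps.

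First, and most seriously, your treatment of case (2) of Proposition~\ref{prop:min-vs-htpy-min} does not work. You assert that the existence of the stable approximants $E_i\to\Omega$ ``together with the maximum principle applied at the regular tangent cone, again force $\Sigma$ to be one-sided minimizing near $p_j$ (otherwise $p_j$ would necessarily lie in $\hnm(\Sigma)$).'' This reasoning is circular: $p_j\notin\hnm(\Sigma)$ only tells you that $\Sigma$ is one-sided \emph{homotopy} minimizing near $p_j$, which is exactly the hypothesis you started from, not that it is one-sided \emph{area} minimizing. Whether case (2) can be upgraded to case (1) is precisely what the paper flags as open in the Remark following Proposition~\ref{prop:min-vs-htpy-min}. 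The paper sidesteps this entirely: in case (2) it does not try to prove that $\Sigma$ is minimizing, but instead uses the sequence $\Sigma_i=\partial E_i$ \emph{directly} as the smooth approximants (they are regular near $p$ by the Hardt--Simon/Liu argument since they are one-sided minimizers converging to $\Sigma$ with minimizing tangent cone). In case (1) the paper constructs such $\Sigma_i$ by minimizing with slightly displaced boundary data. In both cases one then glues $\Sigma_i$ to $\Sigma$ across an annulus via a cutoff, producing a surface with $o(1)$ mean curvature supported in the annulus, and absorbs this by a small conformal change of metric. Your Smale-style approach --- perturb the metric and re-minimize --- requires genuine one-sided area minimality of $\Sigma$, which you have not established.

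Second, your claim that $\cS_0(\Sigma)\setminus\hnm(\Sigma)$ is finite is not justified when $n\geq 9$: the set $\cS_0$ is discrete in $\sing\Sigma$ but need not be closed (it can accumulate on the higher strata of the singular set), so compactness of $M$ does not give finiteness. The paper handles this by enumerating the (at most countable) points and making summably small perturbations in disjoint balls.
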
 
\begin{proof}
For every $p \in \cS_0(\Sigma)\setminus \hnm(\Sigma)$, and $\eps_0=\eps_0(p)$ so that $\Sigma \cap (B_{\eps_0}(p)\setminus p)$ is regular, we will show how to perturb $g$ and $\Sigma$ so that $p$ becomes regular. We will do this by making an arbitrarily small change to $g$, $\Sigma$ supported in $B_{\eps_0}(p)$. Because $\cS_0$ is discrete (but not necessarily closed when $n\geq 9$) it is easy to enumerate the elements of $\cS_0(\Sigma)\setminus \hnm(\Sigma)$ and make a summably small change around each point. As such, it suffices to consider just the perturbation near $p$.

By definition, taking $\eps<\eps_0$ sufficiently small, $\Sigma \cap B_\eps(p)$ is one-sided homotopy area-minimizing. For concreteness write $\Sigma \cap B_\eps(p) = \partial \Omega$ in $B_\eps(p)$ and assume that $\Omega$ is inner homotopy minimizing. By Lemma \ref{lem:snm-hnm}, the tangent cone at $p$ is area-minimizing (to the same side). 

We claim that (after taking $\eps>0$ smaller if necessary) there is a sequence of $\Sigma_i\in\cR(B_\eps(p))$ with stable regular part, with $\Sigma_i \subset \Omega$, $\Sigma_i$ disjoint from $\Sigma$, and $\Sigma_i\to \Sigma$. Indeed, we can apply Proposition \ref{prop:min-vs-htpy-min} to conclude that either (after shrinking $\eps>0$), $\Omega$ is area-minimizing to the inside, or there are $\Sigma_i$ as asserted.

In the case that $\Omega$ is area-minimizing to the inside, we can still construct the $\Sigma_i$ by shrinking $\eps>0$ even further so that $\Omega$ is strictly area-minimizing to the inside and then minimizing area with respect to a boundary
$\Sigma \cap \partial B_\eps(p) + \delta_i$, for a sequence $\delta_i\to 0$;
i.e., the boundary of $\Sigma\cap B_\eps(p)$ pushed slightly into $\Omega$. By the unique minimizing property, the minimizers will converge back to $\Sigma$ in $B_\eps(p)$. 

For $i$ sufficiently large we can write the intersection of $\Sigma_i$
with the annulus $A(p,\eps/5,\eps)$ as a graph of function $u_i$
over $\Sigma$.

Reasoning as in Hardt--Simon \cite[Theorem 5.6]{HS} (cf. \cite[Theorem 3.1]{Liu}), for $i$ sufficiently large, $\Sigma_i$ will be regular in $B_{\eps/2}(p)$. We now set 
\[
\tilde\Sigma_i =(\Sigma_i \cap B_{\eps/5}) \cup (\Sigma\setminus B_\eps(p)) \cup ((\Sigma + \chi u_i)\cap A(p,\eps/5,\eps))
\]
where $\chi$ is a smooth cutoff function with $\chi\equiv1$ on $B_{\eps/5}$ and $\chi\equiv 0$ on $B_{3\eps/5}$. Note that
\[
H_g(\tilde\Sigma_i) \quad \text{is supported in $B_{4\eps/5}(p) \setminus B_{\eps/5}(p)$}
\]
and $\Vert H_g(\tilde \Sigma_i) \Vert_{C^{2,\alpha}} = o(1)$ as 
$i \to \infty$.

	Now, define $\tilde g = e^{2f} g$, in this new metric, since $\tilde \Sigma$ is smooth, we have the transformation
	$$
	H_{\tilde g}(\tilde \Sigma)= e^{-f}\left(H_g(\tilde\Sigma)+\frac{\partial f}{\partial \nu} \right)\,,
	$$
	where $\nu$ is the normal direction to $\Sigma$. 
	Setting $H_{\tilde g}(\tilde \Sigma)=0$, this reduces to the equation
	$$
	H_g(\tilde\Sigma)+\frac{\partial f}{\partial \nu}=0
	$$
	which implies that $f=-H_g(\tilde \Sigma) \zeta(\nu)$, for a function $\zeta(t)$ such that $\zeta'(0)=1$ and $\zeta\equiv0$ for $|t|\geq \eps/100$ is a solution. Since, as observed, $H_g(\tilde\Sigma)$ is supported in $A(p,\eps/5,4\eps/5)$, so is the metric change, and since $\|u\|_{C^{4,\alpha}}\leq o(1)$ and $\chi$ is smooth, we have
	$$
	\|g-\tilde g\|_{2,\alpha}=\|e^f-1\|_{C^{2,\alpha}} \|g\|_{2,\alpha} \leq C\, \|u\|_{C^{4,\alpha}}\,   \|g\|_{2,\alpha} = o(1)
	$$ 
	as $i\to \infty$. This completes the proof. 
\end{proof}
	
	\subsection{Proof of Theorem \ref{thm:generic_stratum}} For $g \in \Met^{2,\alpha}(M)$, apply Theorem \ref{thm:nm+index_bound} to find $V\in\cR$ with
	\[
	\cH^0(\hnm(V)) + \Index(V) \leq 1.
	\]
	We can apply Proposition \ref{p:surgery} to $\Sigma = \supp V$ to find a metric $\tilde g$ that is arbitrarily $C^{2,\alpha}$-close to $g$ and a $\tilde g$ minimal hypersurface $\tilde\Sigma \in \cR$ so that $\cS_0(\Sigma) \subset \hnm(\Sigma)$. (Note that if $\Index(V) = 1$, then $ \hnm(\Sigma) =\emptyset$, so $\cS_0(\Sigma) = \emptyset$.) This completes the first part of the proof. 
	
We now consider $g \in \Met^{2,\alpha}_{\Ric>0}(M)$.\footnote{The idea is that positive Ricci curvature rules out stable hypersurfaces but this requires the hypersurface to be two-sided. As such, we must consider two cases, depending on whether $\Sigma$ is one or two-sided.} If $\Sigma$ is two-sided, then $\Index(\Sigma)\geq1$, so we can argue as above.  On the other hand if $\Sigma$ is one-sided, then $[\Sigma]\neq 0 \in H_{n}(M,\ZZ_{2})$. We can then find $\hat\Sigma \in [\Sigma]$ by minimizing area in the homology class. The surface $\hat\Sigma$ may have singularities, but they are all locally area minimizing. Thus, we can apply Proposition \ref{p:surgery} to $\Sigma$ yielding $\tilde\Sigma$ and $\tilde g$ with $\cS_0(\tilde\Sigma) = \emptyset$. 
\qed

\bibliography{bib}
\bibliographystyle{alpha}


\end{document}